\DeclareMathOperator{\Cone}{Cone} \DeclareMathOperator{\pt}{pt}
\DeclareMathOperator{\Ker}{Ker}
\DeclareMathOperator{\const}{const}
\DeclareMathOperator{\Hom}{Hom} \DeclareMathOperator{\rk}{rk}
\DeclareMathOperator{\diag}{diag}
\DeclareMathOperator{\Pe}{Pe} \DeclareMathOperator{\Fl}{Fl}
\DeclareMathOperator{\tr}{tr} \DeclareMathOperator{\re}{Re}
\DeclareMathOperator{\Cy}{Cy} \DeclareMathOperator{\St}{St}
\DeclareMathOperator{\Arg}{Arg} \DeclareMathOperator{\dist}{dist}
\DeclareMathOperator{\free}{free}
\DeclareMathOperator{\Hilb}{Hilb}
\newcommand{\edt}[1]{\textcolor{black}{#1}}
\newcommand{\edtt}[1]{\textcolor{black}{#1}}
\newcommand{\Sch}{Schr\"{o}dinger }
\newcommand{\minel}{\hat{0}}
\newcommand{\ko}{\Bbbk}
\newcommand{\Zo}{\mathbb{Z}}
\newcommand{\Ro}{\mathbb{R}}
\newcommand{\Rg}{\mathbb{R}_{\geqslant 0}}
\newcommand{\Co}{\mathbb{C}}
\newcommand{\br}{\widetilde{\beta}}
\newcommand{\wh}[1]{{\widehat{#1}}}
\newcommand{\stir}[2]{{#1\atopwithdelims\{\}#2}}
\newcommand{\Hr}{\widetilde{H}}
\newcommand{\dd}{\partial}
\newcommand{\Pa}{\mathcal{P}}
\newcommand{\T}{\mathcal{T}}
\newcommand{\I}{\mathbb{I}}
\newcommand{\po}{\tilde{p}}
\newcommand{\B}{\mathbb{B}}
\newcommand{\A}{\mathcal{A}}
\newcommand{\Xn}{X_{n,\lambda}}
\newcommand{\Xneps}{X_{n,\lambda}^{\leqslant\varepsilon}}
\newcommand{\Xngeps}{X_{n,\lambda}^{\geqslant\varepsilon}}
\newcommand{\eps}{\varepsilon}
\newcommand{\Qn}{Q_{n,\lambda}}
\newcommand{\Qngeps}{Q_{n,\lambda}^{\geqslant\varepsilon}}
\newcommand{\PT}{\mathcal{PT}}
\newcommand{\KPT}{K_{\mathcal{PT}}}
\newcommand{\CP}{\mathbb{C}P}
\newcounter{stmcounter}[section]
\numberwithin{equation}{section}
\theoremstyle{plain}
\newtheorem{cor}[stmcounter]{Corollary}
\newtheorem{thm}[stmcounter]{Theorem}
\newtheorem{prop}[stmcounter]{Proposition}
\newtheorem{lem}[stmcounter]{Lemma}
\theoremstyle{definition}
\newtheorem{defin}[stmcounter]{Definition}
\theoremstyle{remark}
\newtheorem{ex}[stmcounter]{Example}
\newtheorem{rem}[stmcounter]{Remark}
\newtheorem{con}[stmcounter]{Construction}
\begin{document}

\title{Space of isospectral periodic tridiagonal matrices}

\author{Anton Ayzenberg}
\address{Faculty of computer science, Higher School of Economics}
\email{ayzenberga@gmail.com}
%
\thanks{
This work is supported by the Russian Science Foundation under grant 18-71-00009.
}
\subjclass[2010]{Primary 34L40, 52B70, 52C22, 55N91, 57R91;
Secondary 05E45, 13F55, 14H70, 15A18, 37C80, 37K10, 51M20, 55R80,
55T10 } \keywords{Isospectral space, matrix spectrum, Toda flow,
periodic tridiagonal matrix, discrete Schr\"{o}dinger operator,
permutohedral tiling, simplicial poset, face ring, equivariant
cohomology, torus action, crystallization}
\begin{abstract}
A periodic tridiagonal matrix is a tridiagonal matrix with
additional two entries at the corners. We study the space $X_{n,\lambda}$ of
Hermitian periodic tridiagonal $n\times n$-matrices with a fixed
simple spectrum $\lambda$. Using the discretized S\edt{c}hr\"{o}dinger operator we
describe all spectra $\lambda$ for which $X_{n,\lambda}$ is a topological
manifold. The space $X_{n,\lambda}$ carries a natural effective action of a
compact $(n-1)$-torus. We describe the topology of its orbit space
and, in particular, show that whenever the isospectral space is a
manifold, its orbit space is homeomorphic to $S^4\times T^{n-3}$.
There is a classical dynamical system: the flow of the periodic Toda lattice, acting
on $X_{n,\lambda}$. Except for the degenerate locus $X_{n,\lambda}^0$, the Toda lattice
exhibits Liouville--Arnold behavior, so that the space
$X_{n,\lambda}\setminus X_{n,\lambda}^0$ is fibered into tori.
The degenerate locus of the Toda system is described in terms of combinatorial geometry:
its structure is encoded in the special cell subdivision of a torus, which is obtained
from the regular tiling of the euclidean space by permutohedra. We apply methods of
commutative algebra and toric topology to describe the cohomology and equivariant
cohomology modules of $X_{n,\lambda}$.
\end{abstract}

\maketitle

\section{Introduction}

Let $\Gamma=(V,E)$ be a simple graph on a set
$V=[n]=\{1,\ldots,n\}$. Let $M_\Gamma$ be the vector space of
Hermitian $n\times n$-matrices $A=(a_{ij})$, such that $a_{ij}=0$
for $(i,j)\notin E$. We consider the space
$M_{\Gamma,\lambda}\subset M_\Gamma$ of all such matrices, which have a
given simple spectrum
$\lambda=(\lambda_1<\lambda_2<\cdots<\lambda_n)$. Note that each
space $M_{\Gamma,\lambda}$ carries the conjugation action of a
compact torus $T^n$. The action is noneffective: scalar matrices
commute with every matrix, hence the diagonal subgroup of $T^n$
acts trivially.

Several examples are well studied. The complete graph $\Gamma=K_n$
corresponds to the space of all isospectral matrices, which is
diffeomorphic to the variety $\Fl_n$ of complete flags in $\Co^n$.
The path graph $\Gamma=\I_n$ with $n+1$ vertices produces the
space $M_{\I_n,\lambda}$ of isospectral tridiagonal matrices,
which is known to be a smooth $2n$-manifold; its smooth type
is independent of $\lambda$. The real version of
$M_{\I_n,\lambda}$ is called the Tomei manifold: it was introduced
and studied in \cite{Tomei}. The $T^n$-action on
$M_{\I_n,\lambda}$ is locally standard and its orbit space is
diffeomorphic to a simple polytope, the permutohedron
\cite{Tomei,DJ}. Note that $M_{\I_n,\lambda}$ is not a toric
variety, although it is closely related to the permutohedral variety \cite{BFR}.

More generally, the spaces $M_{\Gamma_h,\lambda}$ corresponding to
indifferent graphs $\Gamma_h$ are the spaces of staircase
matrices. It is more convenient to encode this type of spaces by
Hessenberg functions. The Hessenberg function is a function
$h\colon[n]\to[n]$ such that $h(i)\geqslant i$ and
$h(i+1)\geqslant h(i)$. The space $M_{\Gamma_h}$ is the space of
Hermitian matrices $A$ such that $a_{ij}=0$ for $j>h(i)$. Every
space $M_{\Gamma_h,\lambda}$ is a smooth manifold independent of a
simple spectrum $\lambda$. Its odd degree cohomology modules vanish,
therefore $M_{\Gamma_h,\lambda}$ is equivariantly formal
in the sense of Goresky--\edt{Kottwitz--}MacPherson (see Definition \ref{definEqForm}). The equivariant
cohomology ring of $M_{\Gamma_h,\lambda}$
can be described using GKM-theory \cite{GKM,Kur}. See \cite{ABhess} for details on the
the spaces $M_{\Gamma_h,\lambda}$ and their relation to regular semi-simple Hessenberg
varieties.

For the star graph $\Gamma=\St_n$ (see Fig.\ref{figGraphs}), the
space $M_{\St_n,\lambda}$ is also a smooth manifold, and its diffeomorphism type
does not depend on $\lambda$. The effective action of $T=T^{n+1}/\Delta(T^1)$ on
$M_{\St_n,\lambda}$ is locally standard, therefore the orbit space
$Q_{\St_n,\lambda}=M_{\St_n,\lambda}/T$ is a manifold with
corners. Unlike the case of tridiagonal matrices, the orbit space
$Q_{\St_n,\lambda}$ for $n\geqslant 3$ is not a simple polytope.
The topology of $Q_{\St_n,\lambda}$ itself is quite complicated,
and it is difficult to state any general result about the manifold
$M_{\St_n,\lambda}$ itself. However, the topology can be described
in details for $n=4$, which was done in \cite{ABarrow}.

\begin{figure}[h]
\begin{center}
\includegraphics[scale=0.45]{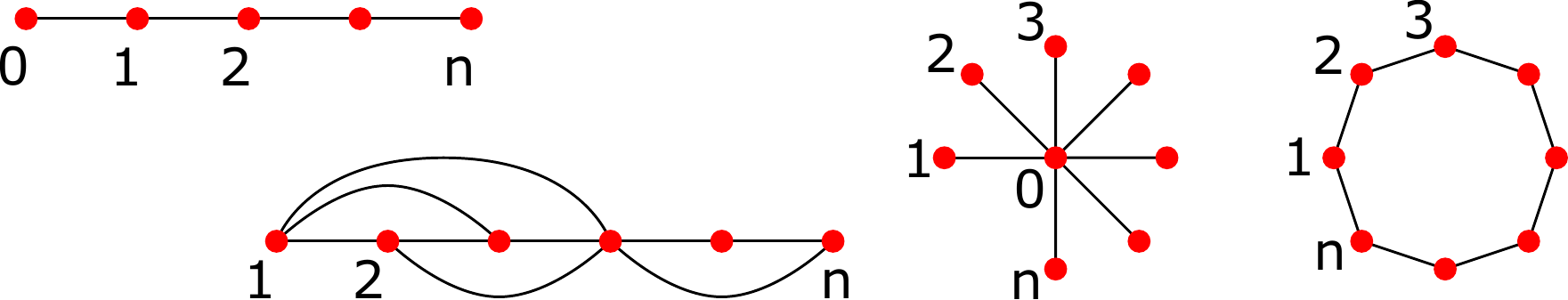}
\end{center}
\caption{Particular graphs, encoding important isospectral matrix
spaces: the path graph $\I_n$, indifferent graphs $\Gamma_h$, the
star graph $\St_n$, and the cycle graph $\Cy_n$}\label{figGraphs}
\end{figure}

In this paper we consider the case $\Gamma=\Cy_n$, the cyclic
graph on $n$ vertices. The Hermitian matrices corresponding to
$\Cy_n$ have the form
\begin{equation}\label{eqCyclicMatrix}
L=L(\underline{a},\underline{b})=\begin{pmatrix}
a_1 & b_1& 0 & \cdots & \overline{b}_n\\
\overline{b}_1& a_2 & b_2 & 0 & \vdots\\
0 & \overline{b}_2 & a_3 & \ddots & 0\\
\vdots & 0 & \ddots &\ddots&  b_{n-1}\\
b_n& \cdots & 0 & \overline{b}_{n-1} &a_n
\end{pmatrix},
\end{equation}
where $a_i\in \Ro$, $b_i\in \Co$. Such matrices are called \emph{periodic tridiagonal matrices} or
\emph{periodic Jacobi matrices}. We will simply call them
\emph{periodic}. It is assumed throughout the paper that $n\geqslant 3$.

The space $\Xn=M_{\Cy_n,\lambda}$ of all periodic matrices with a
simple spectrum $\lambda$ has dimension $2n$, and carries an
effective action of $T=T^{n-1}$. Hence the torus action has
complexity one. The difference between half the real
dimension of a manifold and the dimension of a torus is called \emph{the
complexity of the action}: this terminology naturally comes from both
algebraic geometry and symplectic geometry.

We prove that under certain conditions on a simple spectrum, the
space $\Xn$ is not a smooth manifold, not even a homology
manifold, see Theorem \ref{thmNonSmooth}. This gives a negative
answer to our question, posed in \cite{ABarrow}. This also settles
certain inaccuracy appearing in the work of van Moerbeke
\cite{VanM}, who studied the real analogue of $\Xn$.

For any simple spectrum $\lambda$, we describe the topology of
the orbit space $\Xn/T$, see Corollary \ref{corOrbitSpaceGeneral}.
If $\Xn$ is a topological manifold, we prove that the orbit space $\Xn/T$ is
homeomorphic to the product $S^4\times T^{n-3}$. When $n=3$, the
space $X_{3,\lambda}$ is the space of all Hermitian matrices with
the given spectrum $\lambda$. This space is diffeomorphic to the full complex
flag variety $\Fl_3$. Hence, for $n=3$, we recover the result of
Buchstaber--Terzic \cite{BTober,BT,BT2}, which states that
$\Fl_3/T^2\cong S^4$. Note that the action is not free, however
the orbit space is still a topological manifold. This fact is
consistent with the general theory developed in~\cite{AyLoc}.

The main ingredient of our arguments is the product of
off-diagonal elements
\[
B=\prod\nolimits_{i=1}^nb_i\in \Co
\]
of the periodic matrix $L(\underline{a},\underline{b})$ \edt{and the corresponding map $p\colon \Xn\to \Co$, $p(L(\underline{a},\underline{b}))=B$}. We show
that with the matrix spectrum fixed, the number $B$ takes values
inside a compact convex subset $\B\subset\Co$, lying between two
confocal parabolas, see Theorem \ref{thmImage}. This statement may
be considered a folklore: its real version was proved in
\cite{VanM,Krich}, and the complex version is not more
complicated. In Section \ref{secSpectralCurve} we briefly review
the necessary facts about discrete \Sch operator, needed for
this result.

The value $B$ is preserved by the torus action, hence there is a
map $\po\colon\Xn/T\to\Co$ from the orbit space, evaluating the
number $B$. The set $\po^{-1}(\Co\setminus\{0\})$ consists of free
orbits. However the torus action has nontrivial $T$-equivariant skeleton,
which is a proper subset of $\po^{-1}(0)$. To describe the structure of the
equivariant skeleton, we use combinatorial geometry.

It is well known that euclidean space can be tiled by parallel
copies of a regular permutohedron. Taking quotients by lattices in
a euclidean space, we may produce many interesting permutohedral
cell subdivisions of a torus. We show that a certain lattice
produces a regular cell subdivision $\PT^{n-1}$ of an
$(n-1)$-dimensional torus, which we called \emph{the wonderful
subdivision}. It has several interesting properties. First, it
models the equivariant skeleton of the torus action on $\Xn$.
Second, this wonderful subdivision minimizes the number of facets
among all possible regular cell subdivisions of a torus. Such subdivisions
and their dual simplicial cell subdivisions for general PL-manifolds
are known in combinatorial topology under the name of crystallizations \cite{FGG}.
We briefly recall the required combinatorial geometry in Section~\ref{secTilings}.

Next we describe the topology of the whole space $\Xn$.
Let $\Xn^0=\edt{p}^{-1}(0)$ denote the subset of matrices with $B=0$.
The space $\Xn$ is smooth in vicinity of $\Xn^0$: this actually
follows from the properties of non-periodic Toda lattice, see Proposition
\ref{propSmoothOverZero}. Using the
result of \cite{AyLoc} concerning the topological classification of complexity one
torus actions, we describe the topology of a
small neighborhood $\Xneps$ of $\Xn^0$. It happens that, up to
homeomorphism, the $T^{n-1}$-action on $\Xneps$ can be extended to
a locally standard $T^n$-action on this space. The necessary
notions related to complexity one torus actions are given in
Section \ref{secNeighborhoodGeneral}.

In a series of works \cite{Ay1,Ay2,Ay3,AMPZ} we developed a
toolbox to compute cohomology and equivariant cohomology of
manifolds with locally standard torus \edt{action} whose orbit spaces
have acyclic proper faces. This toolbox is
applied to the subspace $\Xneps$. The $T^n$-orbit space of $\Xneps$ is a
manifold with corners, whose face structure is the wonderful cell
subdivision of a torus, hence all its proper faces are acyclic, so
we are in position to apply the general technique. The
algebro-topological invariants of $\Xneps$ are computed in terms
of combinatorial invariants of the wonderful cell subdivision
$\PT^{n-1}$. We recall the theory of $h$-, $h'$-, and
$h''$-numbers of simplicial posets and compute these invariants
for the dual simplicial poset of the wonderful subdivision in
Section \ref{secNeighborhoodCombinatorics}.

In Section \ref{secEquivCohom} we describe the additive
structure of $T^{n-1}$-equivariant cohomology modules
of the neighborhood $\Xneps$. The ordinary Betti numbers of $\Xn$ are calculated in
Section \ref{secBetti}. There we also prove that $\Xn$ is not equivariantly formal
for $n\geqslant 4$ by comparing equivariant and ordinary Betti numbers of $\Xn$.

\section{Torus action and Toda flow}\label{secActionAndFlow}

The element $t=(t_1,\ldots,t_n)\in T^n$ acts on a cyclic matrix by
the formula
\begin{equation}\label{eqTorusActionCoords}
tL(\underline{a};b_1,\ldots,b_{n-1},b_n)=L(\underline{a};t_1t_2^{-1}\cdot
b_1,\ldots,t_{n-1}t_n^{-1}\cdot b_{n-1},t_nt_1^{-1}\cdot b_n).
\end{equation}
It is easy to see that the torus action preserves the quantity
$B=\prod_1^nb_i$. The action is non-effective: the scalar matrices act
trivially. Hence we consider the effective action of the
quotient torus $T^{n-1}=T^n/\Delta(S^1)$ on $\Xn$.

Apart from the torus action, there is a classical dynamical system acting on the space of periodic
matrices: the \emph{periodic Toda lattice}. We now briefly recall the definition and properties
of this dynamical system.

\begin{con}
For a matrix $L=L(\underline{a},\underline{b})$ consider the
skew-Hermitian matrix
\[
P=P(L)=\begin{pmatrix}
0 & b_1& &  & -\overline{b}_n\\
-\overline{b}_1& 0 & b_2 &  & \\
& -\overline{b}_2 & 0 & \ddots & \\
 &  & \ddots &\ddots&  b_{n-1}\\
b_n& & & -\overline{b}_{n-1} &0
\end{pmatrix}.
\]
The Toda flow (the flow of the periodic Toda lattice) is the flow
\begin{equation}\label{eqTodaFlow}
\dot{L}=[L,P]=LP-PL.
\end{equation}
The solution $L(t)$ to \eqref{eqTodaFlow} remains similar to the initial matrix $L(0)$ at
all times $t\in\Ro$, so the Toda flow preserves the spectrum. Therefore
the flow acts on the isospectral space~$\Xn$.
\end{con}

\begin{rem}
The Toda flow commutes with the torus action. Indeed, the
action of $T$ on $L$ is given by $DLD^{-1}$, for diagonal
Hermitian matrix $D$. We have $P(DLD^{-1})=DP(L)D^{-1}$ and
therefore $[DLD^{-1},P(DLD^{-1})]=D[L,P(L)]D^{-1}$.
\end{rem}

The periodic Toda system is well studied for real symmetric
matrices. We need a more general Hermitian version of periodic
Toda system in order to incorporate torus actions. However, the complex case
is not more complicated than the real one. The equations of the
flow have the coordinate form:
\begin{equation}\label{eqTodaCoords}
\begin{cases}
\dot{a}_i=2(|b_{i-1}|^2-2|b_i|^2),\quad i=1,\ldots,n;\\
\dot{b}_i=b_i(a_i-a_{i+1}),\quad i=1,\ldots,n,
\end{cases}
\end{equation}
where $a_i,b_i$ are assumed cyclically ordered. Since $b_i\in\Co$, each expression in the second line represents
two real equations. We see that the arguments of $b_i\in\Co$
remain constant along the flow. The equations on $a_i,|b_i|$ have
the form
\begin{equation}\label{eqTodaCoordsAbsValues}
\begin{cases}
\dot{a}_i=2(|b_{i-1}|^2-2|b_i|^2),\quad i=1,\ldots,n;\\
\frac{d}{dt}|b_i|=|b_i|(a_i-a_{i+1}),\quad i=1,\ldots,n,
\end{cases}
\end{equation}
which \edtt{coincides} with the real form of the periodic Toda flow.

\begin{con}\label{conDegenerationPoints}
It is a simple exercise that the quantity $B=\prod_1^n b_i$ is
preserved along the flow. In what follows we consider the
exceptional subspace\edtt{
\[
\Xn^{0}=\{L\in \Xn\mid B=0 \}.
\]
}
This subspace can be represented as the union
$\Xn^{0}=\bigcup_1^nY_i$, where $Y_i\subset \Xn$ is the subset of
matrices having $b_i=0$ for a particular index $i\in[n]$. The set $Y_n$ is
just the set of isospectral tridiagonal Hermitian matrices, which
is known to be a smooth manifold whose smooth type is independent
of a simple spectrum $\lambda$ \cite{Tomei}. Moreover, it is known
that $Y_n$ is a quasitoric manifold over a permutohedron
\cite{BFR,DJ} (the reader is advised to consult \cite{BPnew}
concerning the terminology of quasitoric manifolds). Each of $Y_i$
for $i\neq n$ is diffeomorphic to $Y_n$. This follows from the
fact that the matrix with $b_i=0$ can be transformed to
tridiagonal Hermitian matrix by a cyclic permutation of rows and
columns.

Therefore $\Xn^{0}$ is the union of $n$ submanifolds of dimension
$2n-2$, however, these submanifolds intersect nontrivially. In the
intersection of $Y_i$ and $Y_j$ there lies the submanifold of
matrices with $b_i=b_j=0$, which is a torus invariant codimension
2 submanifold of both $Y_i$ and $Y_j$. The combinatorial structure
of these intersections will be described in detail in Section
\ref{secTilings}.

The Toda flow degenerates to a Toda flow of non-periodic Toda lattice on
the exceptional set $\Xn^{0}$. Each submanifold $Y_i$ is preserved
by the flow. The Toda flow on~$Y_i$ is a gradient flow
(see e.g. \cite{TomeiON} or \cite{ChShS}), which means that
asymptotically each trajectory on~$Y_i$ tends to an equilibrium
point. The equilibrium points are the diagonal matrices
\edtt{
\[
L_\sigma=\diag(\lambda_{\sigma(1)},\ldots,\lambda_{\sigma(n)}),
\quad \sigma\in S_n.
\]
}
A direct check shows that the subspace $\Xn$ is a smooth manifold
in a neighborhood of each equilibrium point $L_\sigma$
\cite{Tomei}. The asymptotical properties of the flow on the
exceptional set imply that $\Xn$ is a smooth manifold in a
neighborhood of $\Xn^{0}$. It will be shown in Section
\ref{secOrbitSpace} that $\Xn$ is not always smooth in points with
large values of $B$.
\end{con}

\begin{rem}\label{remSard}
For generic spectrum $\lambda$ the whole space $\Xn$ is a smooth manifold. This
easily follows from Sard's theorem applied to the map sending the periodic
tridiagonal matrix $L$ to the tuple $(\tr L,\tr L^2,\ldots,\tr L^n)$\edtt{.}
\end{rem}

\section{The orbit space of the torus action}\label{secOrbitSpace}

The action of $T=T^{n-1}$ on $\Xn$ has $n!$ fixed points
$L_\sigma$, $\sigma\in S_n$ which coincide with the equilibria of
the Toda flow.

\begin{prop}\label{propSmoothOverZero}
The orbit space $\Qn=\Xn/T$ is a topological manifold in a
neighborhood of $\Xn^{0}/T$. The space $\Qn$ is a topological
manifold for generic $\lambda$.
\end{prop}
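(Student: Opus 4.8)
The plan is to reduce the first claim to a local model around the exceptional set $\Xn^0$ and the second claim to Remark~\ref{remSard} combined with the fact that an orbit space of a free torus action on a manifold is a manifold. For the first assertion, we already know from Construction~\ref{conDegenerationPoints} that $\Xn$ itself is a smooth $T^{n-1}$-manifold in a neighborhood of $\Xn^0$; the remaining work is to understand the local structure of the action near points of $\Xn^0$ and show the quotient is locally euclidean there. The key observation is that $\po^{-1}(\Co\setminus\{0\})$ consists of free orbits, so away from $\Xn^0$ the orbit space is automatically a smooth manifold of dimension $2n-(n-1)=n+1$; the only points of $\Qn$ where manifoldness is not immediate are those lying over $\Xn^0/T$. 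Thus it suffices to produce, for each $L\in\Xn^0$, a $T$-invariant neighborhood $U$ of the orbit $TL$ such that $U/T$ is homeomorphic to an open subset of $\Ro^{n+1}$.

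First I would analyze the isotropy. At an equilibrium point $L_\sigma=\diag(\lambda_{\sigma(1)},\dots,\lambda_{\sigma(n)})$ the whole torus $T^{n-1}$ fixes $L_\sigma$, and the slice representation is a faithful $(n-1)$-dimensional complex $T^{n-1}$-representation (the tangent space to $\Xn$ at $L_\sigma$ is $2n$-real-dimensional, of which the $n$ diagonal directions are fixed — but only $n-1$ of them stay inside the constant-trace slice — while the off-diagonal directions $b_1,\dots,b_n$ carry the torus weights read off from \eqref{eqTorusActionCoords}). Since $B=\prod b_i$ is torus-invariant and $\Xn^0=\{B=0\}$, near $L_\sigma$ the set $\Xn^0$ is the union of the $n$ coordinate hyperplanes $\{b_i=0\}$ in these slice coordinates. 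So near equilibria the model is a linear complexity-one $T^{n-1}$-action on $\Co^n$ (or rather on the isospectral slice, which is smooth of real dimension $2n$), and the orbit space is $\Co^n$ modulo the torus $T^{n-1}$ acting with weights $e_i-e_{i+1}$ (cyclically); this quotient is a manifold, which one can check directly or quote from the complexity-one classification of \cite{AyLoc}. At a general (non-equilibrium) point $L\in\Xn^0$, only a proper subset of the $b_i$ vanish, the isotropy is a proper subtorus, and a slice-theorem argument reduces matters again to a linear complexity-one model; by the smoothness of $\Xn$ near $\Xn^0$ (Construction~\ref{conDegenerationPoints}) the slice representation is well-defined, and the same local analysis applies. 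Assembling these local pictures — which is exactly the content invoked later in the paper via $\Xneps$ and the wonderful subdivision — gives a topological-manifold neighborhood of $\Xn^0/T$ in $\Qn$.

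For the second assertion, by Remark~\ref{remSard} for generic $\lambda$ the space $\Xn$ is a smooth closed $2n$-manifold. Off $\Xn^0$ the $T^{n-1}$-action is free, so $(\Xn\setminus\Xn^0)/T$ is a smooth $(n+1)$-manifold; over $\Xn^0/T$ we just established that $\Qn$ is a topological manifold. Gluing the two open sets, $\Qn$ is a topological $(n+1)$-manifold. The main obstacle is the local model at points of $\Xn^0$ where the isotropy is positive-dimensional: one must verify that the relevant linear complexity-one torus quotients $\Co^k/T^{k-1}$ (with the cyclic root weights) really are homeomorphic to euclidean space, and that the slice theorem glues these local quotients compatibly across the stratification of $\Xn^0$ by the sets $Y_{i_1}\cap\cdots\cap Y_{i_j}$. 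This is precisely where the results of \cite{AyLoc} on complexity-one actions, together with the combinatorics of the permutohedral/wonderful subdivision developed in Section~\ref{secTilings}, do the heavy lifting; here I would simply cite those inputs rather than reprove them.
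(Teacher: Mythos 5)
Your proposal is correct and follows essentially the same route as the paper: both arguments reduce to the observation that the tangent weights at every fixed point $L_\sigma$ are the cyclic roots $\epsilon_i-\epsilon_{i+1}$, any $n-1$ of which are linearly independent, and then invoke the complexity-one orbit-space theorem of \cite{AyLoc} near $\Xn^0$, with the generic case handled by Remark \ref{remSard} plus freeness of the action off $\Xn^0$. (One small slip: the slice representation at $L_\sigma$ is the $n$-dimensional complex representation on the off-diagonal coordinates $b_1,\dots,b_n$, not $(n-1)$-dimensional, but this does not affect the argument.)
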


\begin{proof}
Note that $\dim \Xn=2n$ and $\dim T=n-1$. Consider any fixed point
$L_\sigma$. The tangent representation of the action at a point
$L_\sigma$ has the weight decomposition
\[
T_{L_\sigma}\Xn=V(\alpha_{1,\sigma})\oplus\cdots\oplus
V(\alpha_{n,\sigma}),\qquad \alpha_{i,\sigma}\in \Hom(T^n,S^1),
\]
where $V(\alpha)$ is the 1-dimensional complex representation
\[
tz=\alpha(t)\cdot z.
\]
In terms of the noneffective action of $n$-dimensional torus $T^n$
we have
\[
\alpha_{i,\sigma}=\epsilon_i-\epsilon_{i+1},\quad\mbox{ for any
}\sigma\in S_n,
\]
where $\{\epsilon_1=\epsilon_{n+1},\epsilon_2,\ldots,\epsilon_n\}$
is the standard basis of $\Hom(T^n,S^1)\cong\Zo^n$, as follows
from the explicit expression \eqref{eqTorusActionCoords} for the
action.

The following fact was proved in \cite{AyLoc}. Suppose a torus $T$
of dimension $n-1$ acts effectively on a smooth manifold $X$ of
dimension $2n$, and assume that each connected component of each
equivariant skeleton $X_j$ contains a fixed point. Assume,
moreover, that the action has finitely many fixed points, and, at
each fixed point, any $n-1$ of $n$ weights
$\alpha_1,\ldots,\alpha_n\in\Zo^{n-1}$ of the tangent
representation are linearly independent. Then $X/T$ is a closed
topological $(n+1)$-manifold. Applying this result to $\Xn$ in a
neighborhood of $\Xn^0$, we get the first part of the proposition.

The second part follows easily from Remark \ref{remSard}, since the action of $T$ outside
$\Xn^0$ is free. Therefore, whenever $\Xn$ is a smooth manifold,
the orbit space $\Xn/T$ is smooth outside $\Xn^0/T$, thus it is a
topological manifold.
\end{proof}

To describe the topology of $\Qn$ and $\Xn$, we formulate \edt{a} result
of an independent interest. Let $p\colon\Xn\to\Co$ be the map
which associates the number $B=\prod_{i=1}^n b_i$ to a periodic
tridiagonal matrix $L(\underline{a},\underline{b})$. Since the
$T$-action preserves $B$, there is an induced continuous map
$\po\colon\Qn\to\Co$.

The aim of the following constructions is to describe the image of
$\po$ and all its preimages. The description is given in Theorem
\ref{thmImage} below.

\begin{con}
Let a simple spectrum $(\lambda_1<\ldots<\lambda_n)$ be given.
Consider the characteristic polynomial
$F(x)=\prod_{i=1}^n(x-\lambda_i)$. Since the polynomial has $n$
real roots, we have the sequence of real numbers
\edt{
\[
x_1<x_2<\cdots<x_{n-2}<x_{n-1},
\]
}
where $x_{n-1},x_{n-3},x_{n-5},\ldots$ are the local minima, and
$x_{n-2},x_{n-4},\ldots$ are the local maxima of $F$. Let
\begin{equation}\label{eqMMdefin}
M=\min_{i\mbox{ is even}}F(x_{n-i}),\qquad m=\min_{i\mbox{ is
odd}} -F(x_{n-i}).
\end{equation}
We obviously have $m,M>0$.

\begin{figure}[h]
\begin{center}
\includegraphics[scale=0.5]{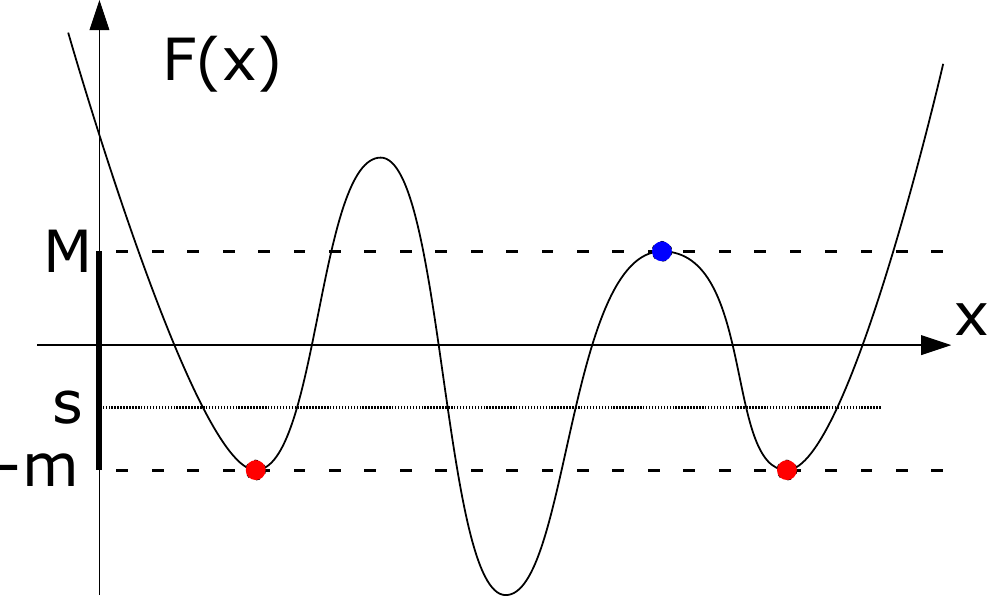}
\end{center}
\caption{The values $M$ and $-m$ on the plot of a characteristic
polynomial}\label{figCharPolyPlot}
\end{figure}

\begin{rem}\label{remMandMmeaning}
The interval $[-m,M]$ represents the set of all $s\in\Ro$ such
that \edt{all roots of} the polynomial $F(x)-s$ \edt{are real}, see
Fig.\ref{figCharPolyPlot}.
\end{rem}

Let $n_+$ be the number of local maxima at which $M$ is achieved
and, similarly, $n_-$ be the number of local minima at which $-m$
is achieved. For generic $\lambda$ there holds $n_+=n_-=1$.
Fig.\ref{figCharPolyPlot} shows the case $n_+=1$, $n_-=2$.
\end{con}

\begin{thm}\label{thmImage}
The image of $p\colon\Xn\to\Co$ is the set
\begin{equation}\label{eqImage}
\B=\left\{z\in\Co\left| |z|\leqslant
\frac12\min\left(\frac{m}{1+\cos\Arg z},\frac{M}{1-\cos\Arg
z}\right)\right.\right\}.
\end{equation}
The preimages of the map $\po\colon \Qn\to\B$ are as follows. If
$z\in \B^{\circ}$, then $\po^{-1}(z)$ is homeomorphic to a compact
torus $\T^{n-1}$. If $z\in\dd\B$ and minimum in \eqref{eqImage} is
achieved at $\frac{M}{1-\cos\Arg z}$, then $\po^{-1}(z)$ is a
torus of dimension $n-1-n_+$. If $z\in\dd\B$ and minimum in
\eqref{eqImage} is achieved at $\frac{m}{1+\cos\Arg z}$, then
$\po^{-1}(z)$ is a torus of dimension $n-1-n_-$. If $z\in\dd\B$
and $\frac{m}{1+\cos\Arg z}=\frac{M}{1-\cos\Arg z}$, then
$\po^{-1}(z)$ is a torus of dimension $n-1-n_+-n_-$.
\end{thm}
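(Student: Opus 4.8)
The plan is to reduce everything to the spectral theory of the discrete periodic Schrödinger operator, which is reviewed in Section~\ref{secSpectralCurve}. Recall that for a periodic tridiagonal matrix $L(\underline a,\underline b)$ the relevant object is the monodromy (transfer) matrix $T(x)\in SL_2$ obtained by multiplying the one-step transfer matrices; its trace $\Delta(x)=\tr T(x)$, the \emph{discriminant}, is a polynomial in $x$ of degree $n$ whose leading behaviour and whose relation to the characteristic polynomial $F(x)=\det(x-L)$ are classical. The key algebraic identity is that, up to the standard normalization, $\det(xI-L)=(-1)^n\bigl(\Delta(x)-B-\overline B\bigr)$ where $B=\prod b_i$; equivalently, fixing the spectrum $\lambda$ is the same as fixing $F$, and then $\Delta(x)=F(x)+2\re B$ is determined up to the single real parameter $2\re B$. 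One also has the companion constraint coming from the product of the off-diagonal entries: the ``Floquet'' normalization forces $|B|$ to be controlled by how far $\Delta$ can be pushed while still keeping all $n$ bands real, i.e. while $\Delta(x)-s$ retains $n$ real roots. First I would make these identities precise and record that $p(L)=B$ depends on $L$ only through the pair $(\re B,\Arg B)$ in a way constrained exactly by the reality of the spectral bands.

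Next I would translate the band-reality condition into the explicit inequality \eqref{eqImage}. Writing $B=\rho e^{i\varphi}$, the matrix $L$ with spectrum $\lambda$ and given $B$ exists if and only if the real polynomial $F(x)+2\rho\cos\varphi$ and (via the conjugate corner entry, or via the second symmetric function of the transfer data) a companion polynomial retain $n$ real roots; by Remark~\ref{remMandMmeaning} the first condition reads $-m\le 2\rho\cos\varphi\le M$. Splitting into the two half-plane cases $\cos\varphi\ge 0$ and $\cos\varphi\le 0$ and combining with the analogous bound obtained from $2\re(\overline b_n\cdot\text{(rest)})$ gives precisely $\rho\le\frac12\min\bigl(\frac m{1+\cos\varphi},\frac M{1-\cos\varphi}\bigr)$, which is \eqref{eqImage}; surjectivity onto $\B$ follows by exhibiting, for each admissible $(\rho,\varphi)$, an explicit periodic matrix (e.g.\ with all $\arg b_i$ equal to $\varphi/n$ and $|b_i|$ chosen from the inverse spectral construction for the non-periodic Jacobi piece). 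This establishes the first sentence of the theorem.

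For the fibers of $\po$, the strategy is to exploit that the torus $T=T^{n-1}$ acts on $p^{-1}(z)\subset\Xn$ with $B$ fixed, and to identify when this action is free. From \eqref{eqTorusActionCoords} the stabilizer of a matrix $L$ is determined by which $b_i$ vanish: if all $b_i\ne 0$ (the interior case $z\in\B^\circ$, $z\ne 0$) the action is free, and $p^{-1}(z)$ is a single $T$-orbit because the inverse spectral problem with $B$ and $\lambda$ fixed has a unique solution up to the torus, hence $\po^{-1}(z)\cong T^{n-1}$. On the boundary $\dd\B$ the extremal polynomial $F(x)+2\re B$ (or its companion) acquires multiple roots exactly at the local extrema where $M$ resp.\ $-m$ is attained; each such coincidence forces a prescribed collection of $b_i$'s to vanish, cutting the matrix into non-periodic Jacobi blocks. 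The number of independent vanishing conditions is $n_+$ (resp.\ $n_-$, resp.\ $n_++n_-$), which collapses the corresponding circle factors of $T$, so the remaining free action has orbits $T^{n-1-n_+}$ etc., and again the reduced inverse spectral problem has a unique solution, giving $\po^{-1}(z)$ equal to a single such smaller orbit. I would organize this as: (i) describe $p^{-1}(z)$ as the solution set of an inverse spectral problem, (ii) count its $T$-stabilizers via the vanishing pattern of the $b_i$, (iii) invoke uniqueness to conclude it is one orbit.

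The main obstacle I anticipate is step~(iii) on the boundary: proving that the fiber is exactly one orbit and not a finite union of orbits, i.e.\ the uniqueness in the inverse periodic spectral problem once some $b_i$ are forced to zero and the spectrum is only ``barely'' realizable. In the interior this uniqueness is classical (the periodic Jacobi inverse problem is solved by the data $(\lambda, B)$ up to gauge), but at the boundary one must check that no discrete ambiguity is introduced when bands degenerate — equivalently, that the degenerate transfer matrix still determines the block Jacobi data uniquely up to the residual torus. I would handle this by a limiting/continuity argument from the interior together with a direct analysis of the degenerate blocks using the known uniqueness for the \emph{non-periodic} Jacobi inverse problem (the Tomei/Moser data), which is why each $Y_i$ in Construction~\ref{conDegenerationPoints} is a single quasitoric manifold; the block structure reduces the periodic boundary case to finitely many non-periodic ones where uniqueness is available.
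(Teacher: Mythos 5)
Your treatment of the image of $p$ follows the paper's route (monodromy operator, the identity $\tr M(x)=P(x)/B$ with $P$ monic, reality of the spectral bands), though the intermediate inequality you write, ``$-m\le 2\rho\cos\varphi\le M$'', is not the correct condition and would not yield \eqref{eqImage}. The constraints come from requiring that \emph{both} band--edge polynomials $P(x)\mp 2|B|$, i.e. $F(x)+2|B|(\cos\varphi\mp 1)$, have $n$ real roots (these are the characteristic polynomials of the real symmetric matrices $L(\pm 1)$ built from the same $a_i,|b_i|$), which via Remark \ref{remMandMmeaning} gives $2|B|(1-\cos\varphi)\le M$ and $2|B|(1+\cos\varphi)\le m$ directly. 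That part is repairable and is essentially the paper's Proposition \ref{propImageDescription}.

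The fiber description, however, contains a genuine error. You assert that for $z\in\B^{\circ}$, $z\ne 0$, the set $p^{-1}(z)$ is a \emph{single} free $T^{n-1}$-orbit because ``the inverse spectral problem with $B$ and $\lambda$ fixed has a unique solution up to the torus.'' This is false: after gauging the phases away (passing to $b_i>0$ with the phase in the corner entry), the matrices with fixed $(\lambda,B)$ form an $(n-1)$-dimensional torus parametrized by the auxiliary divisor on the spectral curve --- one point $\mu_k$ on the circle $S_k$ lying over each forbidden zone $I_k$ (van Moerbeke's theorem, Proposition \ref{propMoerbeke}). That Liouville--Arnold torus is exactly $\po^{-1}(z)\subset\Qn$; if your uniqueness claim held, $\po^{-1}(z)$ would be a point and $\Qn$ would be $2$-dimensional, contradicting $\Qn\cong S^4\times\T^{n-3}$. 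Relatedly, your mechanism for the dimension drop on $\dd\B$ --- vanishing $b_i$'s creating stabilizers and cutting the matrix into non-periodic blocks --- cannot occur there: on $\dd\B$ one has $|B|=\prod|b_i|>0$, so no $b_i$ vanishes and the action remains free. The drop by $n_+$ (resp.\ $n_-$) comes instead from the collapse of forbidden zones (double roots of $P(x)-2|B|$, resp.\ $P(x)+2|B|$) at the local extrema where $M$ (resp.\ $-m$) is attained, which shrinks the corresponding circle factors $S_k$ of the Liouville--Arnold torus to points. Without the divisor parametrization, or an equivalent description of the full solution set of the inverse problem, the statement about the preimages is not established.
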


The convex set $\B$ is shown on Fig.\ref{figBset}: it is bounded
by arcs of two confocal parabolas. The set $\B$ is a 2-dimensional
manifold with corners: we denote by $F_+$ and $F_-$ its left and
right sides respectively, and $F_+\cap F_-=\{z_{top},z_{bot}\}$.
Note that the minimum is achieved at $\frac{M}{1-\cos\Arg z}$
whenever $z$ lies on the left side of the figure, which explains the notation.

\begin{rem}
It will be convenient to distinguish between the torus, which acts
on spaces\edt{,} and the geometrical tori arising in Theorem
\ref{thmImage}. Hence toric groups are denoted by the symbol~$T$,
and tori appearing in geometrical considerations are denoted by
the symbol~$\T$.
\end{rem}

\begin{figure}[h]
\begin{center}

\begin{tikzpicture}[scale=0.6]
\filldraw[color=black!15]
plot[domain=-2*sqrt(3):2*sqrt(3),smooth,variable=\t]
({-(9-\t*\t)/6},{\t})--plot[domain=2*sqrt(3):-2*sqrt(3),smooth,variable=\t]
({-(\t*\t-16)/8},{\t});

\draw[->] (-3,0) -- (3,0); 

\draw[->] (0,-5) -- (0,5); 

\draw[domain=-2:2,smooth,variable=\t,blue,ultra thick] plot
({-3*cos(\t r)/(1+cos(\t r))},{3*sin(\t r)/(1+cos(\t r))});

\draw[domain={-1.7+pi}:{1.7+pi},smooth,variable=\t,red,ultra
thick] plot ({-4*cos(\t r)/(1-cos(\t r))},{4*sin(\t r)/(1-cos(\t
r))});

\node[color=blue] at (-2,1) {$F_+$}; \node[color=red] at (2.5,1)
{$F_-$};

\filldraw (0.5,{2*sqrt(3)}) circle(3pt) (0.5,{-2*sqrt(3)})
circle(3pt);

\node at (0.5,0.5) {$\B$}; \node at (0.7,4.3) {$z_{top}$}; \node
at (0.7,-4.3) {$z_{bot}$};

\end{tikzpicture}
\end{center}
\caption{The set $\B$}\label{figBset}
\end{figure}

\begin{cor}\label{corOrbitSpaceGeneral}
With parameters $n_+$ and $n_-$ as above, the orbit space $\Qn$ is
homeomorphic to $\Sigma(\T^{n_-}\ast \T^{n_+})\times
\T^{n-1-n_--n_+}$.
\end{cor}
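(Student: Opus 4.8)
The plan is to realize $\Qn$ as the total space of a map to the convex set $\B$ whose fibers are given by Theorem \ref{thmImage}, and then to recognize this as the claimed product of a suspension of a join with a torus. First I would set up the fiber bundle structure: over the interior $\B^\circ$ the map $\po$ has constant fiber $\T^{n-1}$, and since $\B^\circ$ is contractible (it is an open convex set in $\Co$), the restriction $\po^{-1}(\B^\circ)\to\B^\circ$ is a trivial bundle, so $\po^{-1}(\B^\circ)\cong\B^\circ\times\T^{n-1}$. The topological content therefore all lies in how the fibers collapse over the boundary $\dd\B$, which consists of the two parabolic arcs $F_+$ and $F_-$ meeting at the two corners $z_{top},z_{bot}$.

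Next I would analyze the collapsing. Over the open arc $F_-$ (left side, where the minimum in \eqref{eqImage} is $\frac{M}{1-\cos\Arg z}$) the fiber is $\T^{n-1-n_+}$, meaning a fixed subtorus $\T^{n_+}$ has been collapsed; over the open arc $F_+$ the fiber is $\T^{n-1-n_-}$, collapsing a complementary-type subtorus $\T^{n_-}$; over the two corners both get collapsed and the fiber is $\T^{n-1-n_+-n_-}$. The key algebraic point, which I would extract from the proof of Theorem \ref{thmImage} (the discrete Schr\"odinger / spectral curve picture), is that the two collapsing subtori $\T^{n_+}$ and $\T^{n_-}$ are \emph{independent} inside $\T^{n-1}$ — they span a subtorus $\T^{n_+}\times\T^{n_-}$ with complement $\T^{n-1-n_+-n_-}$. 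Once this splitting of the acting geometric torus is established, the whole picture factors: $\Qn\cong \left(\text{the part involving }\T^{n_+}\times\T^{n_-}\text{ over }\B\right)\times\T^{n-1-n_+-n_-}$, where the second factor is just carried along untouched.

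Then I would identify the first factor. We have a disk $\B$ (a $2$-disk: a convex region with two corners, but PL-homeomorphic to $D^2$) with $\T^{n_+}\times\T^{n_-}$ over the interior, $\T^{n_-}$ over the arc $F_+$, $\T^{n_+}$ over the arc $F_-$, and a point over each of the two corners. This is precisely the standard model for a join: recall that $\T^{n_-}\ast\T^{n_+}$ is the union $(D^{n_-+1}\times\T^{n_+})\cup_{\T^{n_-}\times\T^{n_+}}(\T^{n_-}\times D^{n_+ +1})$ — no, more to the point, I would use the bipolar description of $\B$. Slice $\B$ by the segment joining $z_{top}$ to $z_{bot}$; each half is a bigon with one side on $F_+$ and one on $F_-$. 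Over one such bigon, collapsing $\T^{n_+}$ along the $F_-$ side and $\T^{n_-}$ along the $F_+$ side and everything at the two endpoints yields exactly the join $\T^{n_-}\ast\T^{n_+}$ (the join of $X$ and $Y$ is $(CX\times Y)\cup(X\times CY)$ glued along $X\times Y$, and a bigon with the two sides labeled by the two cone directions is the standard "square" model of the join). The two halves of $\B$ glue along the spine $\{z_{top}\}\ast(\text{fiber over the spine})\ast\{z_{bot}\}$, and since the fiber over the interior of the spine is the full $\T^{n_+}\times\T^{n_-}$, gluing two copies of $\T^{n_-}\ast\T^{n_+}$ along a common copy of $\T^{n_-}\ast\T^{n_+}$ (sitting as the "equatorial" slice) produces the unreduced suspension $\Sigma(\T^{n_-}\ast\T^{n_+})$. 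This gives $\po^{-1}(\B)$ for the first factor $\cong\Sigma(\T^{n_-}\ast\T^{n_+})$, and hence $\Qn\cong\Sigma(\T^{n_-}\ast\T^{n_+})\times\T^{n-1-n_--n_+}$.

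The main obstacle I anticipate is making the join/suspension identification rigorous rather than merely pictorial: one must check that the collapsing of the subtori over $F_+$, $F_-$ and the corners is "standard" in the sense that it locally looks like the boundary collapse in a product of disks with tori — i.e. that near $F_-$ the pair $(\po^{-1}(\text{nbhd}),\,\text{nbhd})$ is equivariantly modeled on $(D^2\times D^{n_++1}\times\T^{n-1-n_+}, D^2)$ with the obvious collapse, and similarly near the corners. This regularity of the degeneration is exactly what the spectral-curve analysis underlying Theorem \ref{thmImage} provides (the fibers degenerate by the vanishing of coordinates in a local product chart), so the argument is to upgrade that local normal form into the global gluing. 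A secondary technical point is the PL/topological identification of $\B$ with $D^2$ and of the bipolar decomposition with the standard join square; this is elementary convex geometry once the corners $z_{top},z_{bot}$ are singled out as in Fig.\ref{figBset}.
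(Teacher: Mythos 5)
Your overall strategy is the same as the paper's: trivialize the torus family over $\B$, use the independence of the two collapsing subtori (coming from the spectral--curve description) to split off the factor $\T^{n-1-n_+-n_-}$, and then read off the remaining factor from the collapse pattern over the bigon $\B$. (A harmless slip: in the paper's convention the \emph{left} arc is $F_+$ and the $n_+$-dimensional torus collapses over it, so your labels are swapped; the statement is symmetric, so nothing is affected.) However, the final join/suspension assembly fails as written. First, the preimage of a half of $\B$ cannot be the join $\T^{n_-}\ast\T^{n_+}$: after splitting off $\T^{n-1-n_+-n_-}$ the first factor has dimension $n_++n_-+2$, and the preimage of a (two-dimensional) half of $\B$ is a codimension-zero piece of it, whereas $\T^{n_-}\ast\T^{n_+}$ has dimension $n_++n_-+1$. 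Second, the gluing claim ``two copies of $\T^{n_-}\ast\T^{n_+}$ glued along a common copy give $\Sigma(\T^{n_-}\ast\T^{n_+})$'' is false: gluing two copies of a space $Z$ along $Z$ by the identity returns $Z$; a suspension is two \emph{cones} on $Z$ glued along $Z$. Relatedly, your slicing is inconsistent: cutting along the chord from $z_{top}$ to $z_{bot}$ produces halves each bounded by that chord and by one whole arc $F_\pm$, not halves ``with one side on $F_+$ and one on $F_-$''; and the preimage of that chord is not $\{z_{top}\}\ast(\T^{n_+}\times\T^{n_-})\ast\{z_{bot}\}$ (which is contractible, being a join with an interval) but the unreduced suspension of $\T^{n_+}\times\T^{n_-}$.

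The repair is exactly the paper's picture. Cut $\B$ along a chord joining an interior point of $F_+$ to an interior point of $F_-$ (the paper uses $\B\cap\Ro$). Its preimage, modulo the split-off torus, is an interval times $\T^{n_+}\times\T^{n_-}$ with one factor collapsed at each end, i.e. precisely the cylinder model of $\T^{n_-}\ast\T^{n_+}$. Each of the two remaining pieces of $\B$ is a cone on that chord with apex $z_{top}$, respectively $z_{bot}$; coning the previous identification (everything collapses at the apex) shows that the preimage of each piece is $C(\T^{n_-}\ast\T^{n_+})$, and two cones glued along $\T^{n_-}\ast\T^{n_+}$ give $\Sigma(\T^{n_-}\ast\T^{n_+})$. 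Equivalently, writing the bigon $\B$ as $\Sigma[0,1]$ with suspension points $z_{top},z_{bot}$ and applying the cylinder model of the join fiberwise over the family of chords gives the whole first factor at once. Your closing concern --- that the degeneration over $\dd\B$ is the standard fiberwise collapse in the action--angle/spectral-curve coordinates --- is the right thing to flag, and is at the same level of rigor as the paper's own (rather terse) proof, which asserts the trivialization $\Qn\cong\B\times\T^{n-1}/\sim$ with the stated collapses and then performs precisely the suspension-over-$\po^{-1}(\B\cap\Ro)$ identification above.
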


\begin{proof}[Proof of the corollary]
The space $\Qn$ is foliated over the contractible space $\B$ by
tori. Hence
\[
\Qn\cong \B\times \T^{n-1}/\sim,
\]
where certain $n_+$-dimensional subtorus $\T_+$ is collapsed over
$F_+$ and another $n_-$-dimensional subtorus $\T_-$ is collapsed
over $F_-$ (the nature of these tori and their independence is
clarified in Section \ref{secSpectralCurve}). We have
$\T=\T_+\times \T_-\times \T^{n-1-n_--n_+}$. The sub\edt{torus}
$\T^{n-1-n_--n_+}$ separates as a direct factor of $\Qn$. The
remaining factor is the suspension space, with the suspension
points being the preimages of the points $z_{top}$ and $z_{bot}$.
This suspension is taken over the space \edt{$\po^{-1}(\B\cap\Ro)$}
which is homeomorphic to the join of $\T_+$ and $\T_-$.
\end{proof}

\begin{cor}\label{corOrbitSpace}
For generic spectrum $\lambda$ there is a homeomorphism $\Qn\cong S^4\times
\T^{n-3}$.
\end{cor}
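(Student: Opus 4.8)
The plan is to derive Corollary~\ref{corOrbitSpace} as the special case of Corollary~\ref{corOrbitSpaceGeneral} in which the spectrum $\lambda$ is generic. First I would invoke the remark made right after \eqref{eqMMdefin}: for generic $\lambda$ the maximum value $M$ is attained at a single local maximum of $F$ and the value $-m$ at a single local minimum, i.e.\ $n_+=n_-=1$. Substituting $n_+=n_-=1$ into the general homeomorphism $\Qn\cong \Sigma(\T^{n_-}\ast\T^{n_+})\times\T^{n-1-n_--n_+}$ gives $\Qn\cong\Sigma(\T^1\ast\T^1)\times\T^{n-3}$, so the whole statement reduces to the single topological identification $\Sigma(\T^1\ast\T^1)\cong S^4$.

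For that identification I would use the standard fact about joins of spheres: $S^p\ast S^q\cong S^{p+q+1}$. Since $\T^1\cong S^1$, we get $\T^1\ast\T^1\cong S^1\ast S^1\cong S^3$, and then $\Sigma S^3\cong S^4$, because the unreduced suspension of $S^k$ is $S^{k+1}$. Assembling, $\Sigma(\T^1\ast\T^1)\cong\Sigma S^3\cong S^4$, hence $\Qn\cong S^4\times\T^{n-3}$. One should note that when $n=3$ the torus factor $\T^{n-3}$ is a point, and the corollary recovers $\Fl_3/T^2\cong S^4$, consistent with the Buchstaber--Terzic result cited in the introduction; this is a sanity check rather than a separate case.

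The argument is essentially a one-line specialization, so there is no serious obstacle; the only point that deserves a word of care is the genericity claim $n_+=n_-=1$. Concretely, the locus of spectra for which some local maximum value of $F$ coincides with another, or for which $M$ is attained twice, is a proper closed (indeed positive-codimension real-algebraic) subset of the space of simple spectra $\{\lambda_1<\cdots<\lambda_n\}$, because it is cut out by nontrivial polynomial equations in the $\lambda_i$ (equality of two critical values of $F$). Its complement is open and dense, which is exactly what ``generic $\lambda$'' means here. With that observed, Corollary~\ref{corOrbitSpaceGeneral} applies verbatim and yields the claim. I would therefore present the proof in three short steps: (i) genericity forces $n_+=n_-=1$; (ii) apply Corollary~\ref{corOrbitSpaceGeneral}; (iii) compute $\Sigma(S^1\ast S^1)=\Sigma S^3=S^4$.
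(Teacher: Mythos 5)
Your proposal is correct and follows the paper's own argument exactly: for generic $\lambda$ one has $n_+=n_-=1$, and then Corollary \ref{corOrbitSpaceGeneral} together with $\Sigma(\T^1\ast\T^1)\cong\Sigma S^3\cong S^4$ gives the claim. The extra remark justifying why genericity forces $n_+=n_-=1$ is a harmless elaboration of what the paper leaves implicit.
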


\begin{proof}
In generic case we have $n_+=n_-=1$. Therefore
$\Sigma(\T^1\ast\T^1)\cong\Sigma S^3~\cong~S^4$.
\end{proof}

\begin{cor}[Theorem of Buchstaber--Terzic \cite{BTober,BT2}]\label{corOrbitFlags}
Consider the effective action of $T=T^3/\Delta(T^1)$ on the
manifold $\Fl_3$ of complete complex flags in $\Co^3$. The orbit
space $\Fl_3/T$ is homeomorphic to $S^4$.
\end{cor}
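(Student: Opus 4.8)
The plan is to obtain this as the case $n=3$ of Corollary \ref{corOrbitSpaceGeneral}. First I would observe that for $n=3$ the cyclic graph $\Cy_3$ coincides with the complete graph $K_3$, so a periodic tridiagonal $3\times 3$ matrix is an arbitrary Hermitian matrix; hence $X_{3,\lambda}=M_{\Cy_3,\lambda}=M_{K_3,\lambda}$ is the space of all Hermitian $3\times 3$ matrices with the fixed simple spectrum $\lambda$. This space is the adjoint orbit of $\diag(\lambda_1,\lambda_2,\lambda_3)$ under $U(3)$, that is $U(3)/T^3\cong\Fl_3$, and the diagonal conjugation action $L\mapsto DLD^{-1}$ corresponds under this identification to the left translation action of $T^3$ on $U(3)/T^3$; the scalar subgroup acts trivially, so the effective quotient action is exactly the standard $T=T^3/\Delta(T^1)$ action on $\Fl_3$. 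Therefore $\Fl_3/T\cong X_{3,\lambda}/T=Q_{3,\lambda}$, and it suffices to compute $Q_{3,\lambda}$.

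Next I would pin down the numbers $n_+$ and $n_-$ for $n=3$. The characteristic polynomial $F(x)=(x-\lambda_1)(x-\lambda_2)(x-\lambda_3)$ is a monic cubic with three distinct real roots, so it has exactly one local maximum and exactly one local minimum. Consequently the minima in \eqref{eqMMdefin} defining $M$ and $m$ are each attained at a single critical point, so $n_+=n_-=1$ for \emph{every} simple spectrum $\lambda$; in particular no genericity hypothesis has to be checked, the case $n=3$ is automatically generic in the sense of Corollary \ref{corOrbitSpace}.

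Finally I would apply Corollary \ref{corOrbitSpaceGeneral} with $n=3$ and $n_+=n_-=1$, which gives
\[
Q_{3,\lambda}\cong\Sigma(\T^{1}\ast\T^{1})\times\T^{\,3-1-1-1}=\Sigma(\T^{1}\ast\T^{1})\times\T^{0}=\Sigma(S^1\ast S^1).
\]
Since $S^1\ast S^1\cong S^3$, the right-hand side is $\Sigma S^3\cong S^4$, and combining with the first step yields $\Fl_3/T\cong S^4$, as claimed. There is no genuinely hard step here: all the geometric content is already carried by Theorem \ref{thmImage} and Corollary \ref{corOrbitSpaceGeneral}, and the only point deserving a line of care is the equivariant identification $\Fl_3\cong X_{3,\lambda}$ in the first paragraph --- one must match the torus acting on the flag variety with the conjugation torus, not merely check that the underlying manifolds are diffeomorphic --- which is classical.
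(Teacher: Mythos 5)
Your proposal is correct and follows essentially the same route as the paper: identify $X_{3,\lambda}$ with $\Fl_3$ and specialize Corollary \ref{corOrbitSpaceGeneral} (equivalently Corollary \ref{corOrbitSpace}) to $n=3$, $n_+=n_-=1$, giving $\Sigma(S^1\ast S^1)\cong S^4$. Your two added remarks --- that a cubic with three distinct real roots forces $n_+=n_-=1$ for \emph{every} simple spectrum, so no genericity needs to be assumed, and that the identification $\Fl_3\cong X_{3,\lambda}$ must be made torus-equivariantly --- are exactly the small points the paper's terse proof leaves implicit, and you handle them correctly.
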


\begin{proof}
Note that $X_{3,\lambda}$ is just the set of all Hermitian
matrices with the given spectrum. This manifold is diffeomorphic
to the flag manifold $\Fl_3$. \edt{Notice that, if $n=3$ the characteristic polynomial is cubic
so it has exactly one local maximum and one local minimum: $n_+=n_-=1$.
Hence the statement} is the particular case
of Corollary \ref{corOrbitSpace} with $n=3$.
\end{proof}

\begin{thm}\label{thmNonSmooth}
If $\lambda$ is a simple spectrum such that either $n_+>1$ or $n_->1$,
then $\Xn$ is not a homology manifold. In
particular, this space is not a smooth manifold.
\end{thm}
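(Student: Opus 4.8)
The plan is to derive a contradiction with the local homological structure of a manifold at a point lying over the boundary of $\B$. Recall that a space $X$ is a homology manifold of dimension $d$ if $H_*(X, X\setminus\{x\};\Zo)\cong H_*(\Ro^d,\Ro^d\setminus\{0\};\Zo)$ for every point $x$, i.e. this local homology is $\Zo$ concentrated in degree $d$ and $0$ elsewhere. We have $\dim\Xn=2n$, so if $\Xn$ were a homology manifold it would have to be a homology $2n$-manifold.

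First I would locate the bad points. Suppose $n_+>1$ (the case $n_->1$ is symmetric, using $F_+$ in place of $F_-$). Pick $z_0\in\dd\B$ in the relative interior of the right side $F_-$, so that the minimum in \eqref{eqImage} is achieved at $\frac{M}{1-\cos\Arg z_0}$ and strictly; by Theorem \ref{thmImage}, $\po^{-1}(z_0)$ is a torus $\T^{n-1-n_+}$ of dimension strictly less than $n-1$. Pick any point $q\in\po^{-1}(z_0)\subset\Qn$, and let $x\in\Xn$ be a point in the $T$-orbit over $q$. Since the $T$-action near $x$ is free (because $B(x)\neq 0$, as $z_0\neq 0$ when $n_+\geq1$ forces $z_0$ on $F_-$ away from the origin — one checks $0\in\B^\circ$), a neighborhood of $x$ in $\Xn$ is homeomorphic to $T^{n-1}\times U$ where $U$ is a neighborhood of $q$ in $\Qn$. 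Hence $\Xn$ is a homology $2n$-manifold near $x$ if and only if $\Qn$ is a homology $(n+1)$-manifold near $q$: by the Künneth formula, $\widetilde H_*(U,U\setminus\{q\})$ and $\widetilde H_*(T^{n-1}\times U, T^{n-1}\times(U\setminus\{q\}))$ differ only by tensoring with $H_*(T^{n-1})$, which is torsion-free, so the former is concentrated in one degree with value $\Zo$ iff the latter is concentrated (shifted by $n-1$) with value $\Zo$.

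So it remains to show $\Qn$ is not a homology manifold at $q$. Here I use the fibered structure from Corollary \ref{corOrbitSpaceGeneral}, or more directly the local model: near $q$, the orbit space looks like $D^2\times \T^{n-1}/\sim$ where an $n_+$-torus $\T_+$ is collapsed over the portion of $\dd D^2$ corresponding to $F_-$. Thus a neighborhood of $q$ in $\Qn$ is homeomorphic to $(\T^{n-1-n_+})\times C$, where $C$ is the mapping-cylinder-type space obtained by collapsing $\T^{n_+}$ over the boundary arc of a $2$-disk — concretely, $C$ is a neighborhood of the cone point in the open cone on $(\partial D^2 \ast \T_+)$-type construction; the link of $q$ inside $C$ is a space homotopy equivalent to the join / suspension $\Sigma(S^0 \ast \T^{n_+-1})$ or, more carefully, the suspension of $\T^{n_+}$ appearing as $\po^{-1}(\text{semicircle})/\T_+$ glued appropriately. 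The key point is that the link $L$ of $q$ in $\Qn$ is (up to the torus factor, which only tensors homology with a free module) a space with $\widetilde H_*(L)$ NOT isomorphic to that of a sphere: because $n_+>1$, collapsing $\T^{n_+}$ over an arc produces a link whose reduced homology has a nontrivial contribution $H_{*}(\T^{n_+})$ in degrees between $1$ and $n_+$, hence is not the homology of $S^n$. Therefore $H_*(\Qn,\Qn\setminus\{q\})=\widetilde H_{*-1}(L)$ is not concentrated in a single degree with value $\Zo$, so $\Qn$ is not a homology manifold at $q$, and consequently neither is $\Xn$.

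The main obstacle is making the local model of $\Qn$ near $q$ precise — i.e. rigorously identifying the neighborhood as a product of a torus with a cone on an explicitly described link, and computing that link's homology. This requires knowing not just that $\po^{-1}(z_0)$ is a torus of the stated dimension, but how nearby preimages $\po^{-1}(z)$ (for $z\in\B^\circ$ near $z_0$) fit together with it — essentially that over a small half-disk neighborhood of $z_0$ in $\B$, the map $\po$ is, up to homeomorphism, $(w,s)\in D^2_{half}\times \T^{n-1}$ with $\T_+\cong\T^{n_+}$ collapsed along the boundary diameter $\{s=0\}$. This collapsing structure is exactly what the analysis in Sections \ref{secSpectralCurve} and \ref{secActionAndFlow} provides (the arguments of the $b_i$ are flow-invariant and the degeneration of $b_i\to 0$ along the spectral-curve picture identifies which circle subgroups collapse), so I would invoke that description. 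Once the local model is pinned down, the homology computation of the link reduces to Künneth and the observation that a half-disk with an arc collapsed to a point has the homotopy type of a point, while the family over it collapses a torus — the upshot being $\widetilde H_*(\text{link})\cong \widetilde H_*(\Sigma \T^{n_+}) \otimes H_*(\T^{n-2-n_+})$-ish, which has homology in too many degrees to be a sphere precisely when $n_+\geq 2$.
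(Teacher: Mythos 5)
Your argument is correct and essentially the paper's own proof: both locate a singular orbit over $\dd\B$ where a torus of dimension $\geqslant 2$ collapses, use freeness of the $T^{n-1}$-action there to reduce to the local homology of $\Qn$ at that point, and conclude from the cone-on-a-torus local model (the paper invokes Corollary \ref{corOrbitSpaceGeneral} for this structure, where you spell out the link computation explicitly). The only slip is a harmless label swap: under the paper's conventions the $n_+$-torus collapses over $F_+$, not $F_-$, so for $n_+>1$ you should take $z_0$ in the relative interior of $F_+$.
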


\begin{proof}
The space $\Qn\cong \Sigma(T^{n_-}\ast
T^{n_+})\times T^{n-1-n_--n_+}$ is not a homology manifold unless
$n_+=n_-=1$. \edt{Indeed, assume $n_->1$ (the case $n_+>1$ is completely similar). Let $q\in\Qn$ be a point such
that $\po(q)$ lies in the interior of $F_+$. There exists a neighborhood $U_q\subset\Qn$ of
$q$ homeomorphic to $\Cone T^{n_-}\times \Ro^{n-n_-}$, according to the
structure of the join. We have
\[
H_i(U_q,U_q\setminus\{q\}) \cong \Hr_{i-1}(T^{n_-}\ast S^{n-n_--1})\cong \Hr_{i-1-n+n_-}(T^{n_-})\neq 0
\]
for \edtt{$n-n_-+1<i<n+1=\dim\Qn$}, that is at least for one value of $i$. This argument shows that $\Qn$
fails to be a homology manifold at $q$.}

\edt{
The torus action on $\Xn$ is free over $\dd\B$. Hence, for any
point $x\in\Xn$ lying in the orbit $q$, its neighborhood $U_x\ni
x$ is homeomorphic to $U_q\times\Ro^{n-1}$. Therefore
\[
H_i(U_x,U_x\setminus\{x\})\cong H_{i-n+1}(U_q,U_q\setminus\{q\}) \neq 0
\]
for some $i<2n$, so far $\Xn$ fails to be a homology manifold at $x$.}
\end{proof}

\begin{rem}
Van Moerbeke \cite{VanM} proves the real analogue of Theorem
\ref{thmImage}. In the real case, there is a family of tori,
parametrized by real numbers from the interval
$[-M/4,0)\subset\B\cap\Ro$. The dimension of all tori is $n-1$,
except for the torus over the endpoint $-M/4$: its dimension
decreases by $n_+$. Van Moerbeke calls the union of such family ``an
open $n$-dimensional torus''. This naming seems misleading, since
this union is not even a manifold for $n_+>1$, which is proved
similarly to Theorem \ref{thmNonSmooth}.
\end{rem}

\begin{rem}\label{remDegenerateCheb}
The most degenerate case appears in the situation when the values at
all local minima of the characteristic polynomial $F(x)=\prod_{i=1}^{n}(x-\lambda_i)$
coincide, and the values at all local maxima coincide.
For example, this holds for the Chebyshev polynomials $T_n(x)$ (which
are defined on the interval $[-1,1]$ by $T_n(x)=\cos(n\arccos x)$). It can be
seen, that a polynomial gives the maximal possible degeneration if and only if
it coincides with Chebyshev polynomial up to affine transformation of the image and the domain:
\[
F(x)=\gamma T_n(\alpha x+\beta)+\delta,
\]
with the natural requirement that $F(x)$ \edt{is monic} and has $n$ distinct real roots.
\end{rem}

\section{\Sch equation and the spectral
curve}\label{secSpectralCurve}

In this section we prove the first part of Theorem \ref{thmImage}.
It will be assumed that $B=\prod_1^nb_i\neq 0$, i.e. $L\notin
\Xn^0$. The action of $T$ is free on such matrices. We may
identify $\Qn=\Xn/T$ with the set of isospectral Hermitian
matrices of the form
\begin{equation}\label{eqRotatedMatrix}
L(w)=\begin{pmatrix}
a_1 & b_1& 0&\cdots& w^{-1}b_n\\
b_1& a_2 & b_2 & & \vdots\\
0 & b_2 & a_3 & \ddots &\\
\vdots&&\ddots& \ddots& b_{n-1}\\
wb_n& \cdots&& b_{n-1} &a_n
\end{pmatrix}
\end{equation}
where $b_1,\ldots,b_n$ are positive real numbers, and $w\in\Co$,
$|w|=1$. Indeed, the arguments of any $n-1$ off-diagonal terms of
a periodic tridiagonal matrix $L(\underline{a},\underline{b})$ can
be rotated to zero by the torus action \eqref{eqTorusActionCoords}. We continue denoting
$\prod_{i=1}^n b_i$ by the letter $B$, although in the new
notation $B$ is a positive real number.

\begin{prop}\label{propImageDescription}
For a matrix $L(w)$ with a simple spectrum
$\lambda_1<\cdots<\lambda_n$ there holds
\[
B \leqslant \frac12\min\left(\frac{M}{1-\cos\Arg
w},\frac{m}{1+\cos\Arg w}\right),
\]
where $M$ and $m$ are defined by \eqref{eqMMdefin}.
\end{prop}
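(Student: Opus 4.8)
The plan is to run the classical Floquet (transfer--matrix) argument for the discrete Schr\"odinger equation attached to $L(w)$, adding the one ingredient specific to the complex/Hermitian setting: that replacing $w$ by an arbitrary point of the unit circle keeps the matrix Hermitian. Concretely, I would write the eigenvector equation $L(w)\psi=x\psi$, $\psi=(\psi_1,\dots,\psi_n)$, as the three--term recurrence $b_{i-1}\psi_{i-1}+a_i\psi_i+b_i\psi_{i+1}=x\psi_i$ for $i\in[n]$ (indices cyclic, $b_0:=b_n$), subject to the quasi--periodic boundary condition obtained by setting $\psi_0=w^{-1}\psi_n$ and $\psi_{n+1}=w\psi_1$; the two corner entries of \eqref{eqRotatedMatrix} are exactly what makes rows $1$ and $n$ conform to this recurrence, and one checks that $\psi_{i+n}=w\psi_i$ for all $i$. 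Encoding the recurrence by the transfer matrices $T_i(x)=\left(\begin{smallmatrix}(x-a_i)/b_i & -b_{i-1}/b_i\\ 1 & 0\end{smallmatrix}\right)$, the monodromy $T(x)=T_n(x)\cdots T_1(x)$ has $\det T(x)=\prod_{i=1}^n b_{i-1}/b_i=1$, so its eigenvalues $\mu$ satisfy $\mu^2-\Delta(x)\mu+1=0$ with $\Delta(x):=\tr T(x)$. A nonzero eigenvector of $L(w)$ for the value $x$ corresponds to a solution with $(\psi_{n+1},\psi_n)=w(\psi_1,\psi_0)$, which exists precisely when $w$ is an eigenvalue of $T(x)$, i.e. when $\Delta(x)=w+w^{-1}=2\cos\Arg w$ (here $|w|=1$). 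Hence $\lambda_1<\dots<\lambda_n$ are exactly the roots of $\Delta(x)=2\cos\Arg w$.

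Next I would pin down $\Delta$ itself. Reading off the highest power of $x$ through the product of the $T_i(x)$ gives $\Delta(x)=\frac1B x^n+(\text{lower order})$ with $B=\prod_i b_i>0$ in the normalization \eqref{eqRotatedMatrix}. Since $\Delta(x)-2\cos\Arg w$ and $\frac1B F(x)$ are both degree--$n$ polynomials with leading coefficient $\frac1B$ and with the same $n$ \emph{distinct} roots $\lambda_i$, they coincide, so
\[
\Delta(x)=\frac{F(x)}{B}+2\cos\Arg w .
\]

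This is where Hermiticity enters. For every $w'$ on the unit circle $L(w')$ is again Hermitian (the $b_i$ are real positive and $\overline{(w')^{-1}}=w'$), hence has $n$ real eigenvalues counted with multiplicity; and $\Delta$ does not depend on the twist. Therefore $\Delta(x)=s$ has $n$ real roots for every $s\in[-2,2]$, which by the previous formula means that $F(x)=B(s-2\cos\Arg w)$ has $n$ real roots for every $s\in[-2,2]$, i.e. $F(x)=t$ has $n$ real roots for every $t$ in the interval with endpoints $-2B(1+\cos\Arg w)$ and $2B(1-\cos\Arg w)$. By Remark~\ref{remMandMmeaning} the set of such $t$ is exactly $[-m,M]$, so this interval is contained in $[-m,M]$; comparing the two endpoints yields $2B(1-\cos\Arg w)\leqslant M$ and $2B(1+\cos\Arg w)\leqslant m$, which is the assertion (with the usual reading that a vanishing denominator at $w=\pm1$ turns the corresponding fraction into $+\infty$, so that the surviving inequality is the sharp one).

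I expect the main obstacle to be bookkeeping rather than anything conceptual: the sign and ordering in the recurrence, the direction of the boundary twist, and the $SL_2$--normalization of the transfer matrices must be chosen consistently so that ``$x$ is an eigenvalue of $L(w)$'' becomes literally ``$\Delta(x)=2\cos\Arg w$'' and so that the top coefficient of $\Delta$ comes out to be $\tfrac1B$ (and not $B$ or $\pm\tfrac1B$). A minor point to dispatch along the way is that $\Delta$ really has full degree $n$ (guaranteed by $\tfrac1B\neq0$) and that the two degree--$n$ polynomials above genuinely coincide, rather than merely sharing their roots (guaranteed by simplicity of $\lambda$).
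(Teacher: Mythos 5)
Your argument is correct and follows essentially the same route as the paper: the transfer matrices $T_i(x)$ are exactly the paper's monodromy factors $M_i$, your $\Delta(x)=\tr T(x)$ is the paper's $\frac{1}{B}P(x)$, and the final step (Hermiticity of $L(w')$ for all $|w'|=1$ forces all roots of $\Delta(x)=s$ to be real for $s\in[-2,2]$, whence the interval containment against $[-m,M]$) is precisely the paper's comparison $[-1,1]\subseteq[-\frac{m}{2B}+t,\frac{M}{2B}+t]$. The bookkeeping points you flag (determinant $1$, leading coefficient $\frac1B$, coincidence of the two degree-$n$ polynomials via simplicity of $\lambda$) are exactly the ones the paper dispatches in its parenthetical hints.
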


\begin{proof}
Matrices of the form $L(w)$ can be studied using algebro-geometric
method in mathematical physics (we refer to \cite{Krich} for a
brief exposition of this subject in relation to periodic Toda
flow). Let $\l$ be the space of infinite to both sides sequences
$\{\psi_k\}$:
\edtt{
\[
\psi_k\in\Co,\quad k\in\Zo.
\]
}
Consider the \emph{periodic discrete \Sch} operator given by
\[
H\colon \l\to \l,\qquad
H(\psi)_k=b_{k-1}\psi_{k-1}+a_k\psi_k+b_k\psi_{k+1},
\]
where we assume $a_{k+n}=a_k$ and $b_{k+n}=b_k$. The eigenfunction
$\psi\in\l$ of the \Sch operator with eigenvalue $x$ satisfies the
equation
\edtt{
\begin{equation}\label{eqSchEq}
H(\psi)=x\psi.
\end{equation}
}
Since $b_i\neq 0$, every eigenfunction is determined by its
initial values $(\psi_0,\psi_1)\in\Co^2$. We can define the
\emph{monodromy operator} along the period:
\begin{equation}\label{eqMonodromy}
M(x)\colon \Co^2\to\Co^2, \qquad M(x)\colon (\psi_0,\psi_1)\mapsto
(\psi_n,\psi_{n+1}).
\end{equation}
Note that the matrix $L(w)$ has eigenvalue $x$ if and only if
there exists a solution $\psi$ to \eqref{eqSchEq} such that
\edtt{
\[
\psi_{k+n}=w\psi_k.
\]
}
Such functions are called \emph{Bloch solutions}. We see that
whenever there exists a nonzero Bloch solution with parameter $w$,
the number $w$ is the eigenvalue of the monodromy operator $M(x)$,
so we get a relation\edt{
\begin{equation}\label{eqSpecCurve}
\det(wI-M(x))=0.
\end{equation}
}
This equation defines a so called \emph{spectral curve} of the
periodic \Sch equation in the space of parameters $(w,x)\in\Co^2$.
\edt{We have $\det M(x)=1$. Indeed, the operator $M_i\colon
(\psi_{i-1},\psi_i)\mapsto (\psi_i,\psi_{i+1})$ has determinant
$\frac{b_{i-1}}{b_i}$, therefore
\[
M(x)=M_nM_{n-1}\cdots M_1=\frac{b_{n-1}}{b_n}\frac{b_{n-2}}{b_{n-1}}\cdots\frac{b_0}{b_1}=1.
\]}
Hence, the equation \eqref{eqSpecCurve} of the spectral curve can
be rewritten in the form
\begin{equation}\label{eqSpecCurve2}
w^2-\tr M(x)w+1=0.
\end{equation}
\edt{We have $\tr M(x)=\frac{1}{B}P(x)$, where
$B=\prod_1^n b_i$ as before, and $P(x)$ is a monic polynomial in~$x$.
Indeed, decomposing $M(x)$ as the product of operators $M_i$
along the period, $i=1,\ldots,n$, and counting the terms of highest
degree of $x$, we see that the leading coefficient of $\tr M(x)$ is $1/B$}.
Dividing \eqref{eqSpecCurve2} by $w$ and denoting
$t=\re w=\frac12(w+w^{-1})$, we get $2t=\frac{1}{B}P(x)$.

The polynomial $P(x)-2Bt$ is monic and has the given sequence
$\lambda_1,\ldots,\lambda_n$ as its roots, therefore
\[
P(x)-2Bt=\prod\nolimits_{i=1}^n(x-\lambda_i)=F(x),
\]
\edtt{
\begin{equation}\label{eqRelOnPolynomials}
P(x)=F(x)+2Bt.
\end{equation}
}
Consider the set
\[
\A=\{s\in\Ro\mid P(x)=2Bs\mbox{ has }n \mbox{ real roots}\}.
\]
Recalling the definition of $m$ and $M$ and remark
\ref{remMandMmeaning} as well as relation
\eqref{eqRelOnPolynomials}, we see that $\A$ is the closed
interval $[-\frac{m}{2B}+t,\frac{M}{2B}+t]$.

Note that the polynomial $P(x)-2Bs$ is the characteristic
polynomial of the matrix $L(w_s)$, where $|w_s|=1$, $\re w_s=s$. \edt{The matrix $L(w_s)$ is Hermitian,
therefore all its eigenvalues are real. Hence, for any $s\in[-1,1]$, all roots of the equation $P(x)=2Bs$ are real.} Therefore,
\[
[-1;1]\subseteq \left[-\frac{m}{2B}+t,\frac{M}{2B}+t\right],
\]
from which we deduce $B\leqslant \frac12\min(\frac{M}{1-t},
\frac{m}{1+t})$. Remembering $t=\re w=\cos\Arg w$, we get the
required inequality.
\end{proof}

\begin{prop}\label{propPreimageOfNonzero}
For $z\in\B$, $z\neq 0$, the preimage $\po^{-1}(z)$ is
homeomorphic to a torus. The dimension of a torus is $n-1$ if $z$
lies in the interior of $\B$, $n-1-n_+$ if $z$ lies in the
relative interior of $F_+$, $n-1-n_-$ if $z$ lies in the relative
interior of $F_-$, and $n-1-n_+-n_-$ if $z$ is either $z_{top}$ or
$z_{bot}$.
\end{prop}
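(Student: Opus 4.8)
The plan is to reduce $\po^{-1}(z)$ to a genuine isospectral set of (Bloch-twisted) periodic Jacobi matrices with a \emph{fixed} spectral curve, apply the classical inverse spectral correspondence to see that this set is a product of circles and points, and finally count the collapsed factors using $M$, $m$, $n_+$, $n_-$ and the position of $z$ in $\B$. Since $z\neq 0$, the $T$-action is free over $z$, so, exactly as in Section~\ref{secSpectralCurve}, I would write $z=Bw$ with $B=|z|>0$ and $w=z/|z|$ on the unit circle, and identify $\po^{-1}(z)$ with the set of matrices $L(w)$ of the form~\eqref{eqRotatedMatrix} having $b_i>0$, $\prod_{i=1}^n b_i=B$, the given $w$, and spectrum $\lambda$. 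The key observation, borrowed from the proof of Proposition~\ref{propImageDescription}, is that every such matrix has the \emph{same} discriminant $\tau(x):=\tr M(x)=\frac1B F(x)+2t$, where $t=\re w=\cos\Arg z$; hence the associated spectral curve
\[
\Gamma_z=\overline{\{(x,\mu)\mid \mu^2-\tau(x)\mu+1=0\}}
\]
depends only on $z$. Substituting $\nu=\mu-\tfrac12\tau$ puts it in hyperelliptic form $\nu^2=\tfrac14(\tau(x)-2)(\tau(x)+2)$: a double cover of $\CP^1$ unramified over $x=\infty$, whose finite branch points are the simple roots of $\tau(x)=\pm2$, with a node over each double root.

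The core of the argument is the classical inverse spectral transform for periodic Jacobi matrices (van Moerbeke~\cite{VanM}, Krichever~\cite{Krich}), in the Bloch-twisted form suited to the matrices $L(w)$: one assigns to $L(w)$ its Dirichlet data, namely the $n-1$ eigenvalues of the tridiagonal matrix obtained from $L(w)$ by deleting the first row and column, each tagged with the sheet of $\Gamma_z$ on which the corresponding Bloch solution grows. The $j$-th such eigenvalue lies in the closure of the $j$-th spectral gap of $\tau$ on $\Ro$, and the resulting map is a homeomorphism of $\po^{-1}(z)$ onto $\prod_{j=1}^{n-1}C_j$, where $C_j$ is the \emph{gap circle}: the two copies of the closed gap $[E_{2j},E_{2j+1}]$, one per sheet, glued along their common endpoints, so $C_j\cong\T^1$ if the gap is open and $C_j\cong\pt$ if it is closed ($E_{2j}=E_{2j+1}$). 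Thus $\po^{-1}(z)$ is homeomorphic to a torus of dimension $n-1-c$, where $c$ is the number of closed gaps of $\tau$ — equivalently, $n-1-c$ is the geometric genus of $\Gamma_z$, each closed gap contributing one node and lowering the genus by one. Carrying this inverse spectral correspondence through carefully in the Hermitian (twisted-real) setting — in particular the well-definedness and continuity of the inverse map and its behaviour at band edges, where the two sheets are identified — is the step I expect to be the main obstacle; the rest is elementary.

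Finally I would compute $c$. The branch points of $\Gamma_z$ are the roots of $F(x)=s_+$ and of $F(x)=s_-$, where $s_+=2B(1-t)$ and $s_-=-2B(1+t)$; indeed $\tau(x)=2\iff F(x)=s_+$ and $\tau(x)=-2\iff F(x)=s_-$. By Proposition~\ref{propImageDescription} the hypothesis $z\in\B$ is equivalent to $0\leqslant s_+\leqslant M$ and $-m\leqslant s_-\leqslant 0$, and since $B=|z|>0$ we have $s_+-s_-=4B>0$, so the two root sets are disjoint. As $\lambda$ is simple, $F$ has $n$ simple real roots, hence $F'$ has $n-1$ simple real roots interlacing them, so all $n-1$ critical points of $F$ are nondegenerate; its local maxima take values $\geqslant M$, with value exactly $M$ at $n_+$ of them, and its local minima take values $\leqslant -m$, with value $-m$ at $n_-$ of them. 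At a local maximum $x^\ast$ of $F$ one gets $\tau(x^\ast)=\frac{F(x^\ast)}{B}+2t\geqslant\frac{s_+}{B}+2t=2$, with equality iff $F(x^\ast)=s_+=M$; dually $\tau(x^\ast)\leqslant-2$ at a local minimum, with equality iff $F(x^\ast)=s_-=-m$. Hence each critical point of $F$ lies in its own gap of $\tau$, closed precisely in those equality cases, and then exactly two branch points collide there. Therefore $c=0$ when $s_+<M$ and $s_->-m$; $c=n_+$ when $s_+=M$ and $s_->-m$; $c=n_-$ when $s_+<M$ and $s_-=-m$; and $c=n_++n_-$ when $s_+=M$ and $s_-=-m$. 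Reading off which bound in~\eqref{eqImage} is active — $z\in\B^\circ$, $z$ in the relative interior of $F_+$ (minimum attained at $\frac{M}{1-\cos\Arg z}$, so $s_+=M$), $z$ in the relative interior of $F_-$ (so $s_-=-m$), or $z\in\{z_{top},z_{bot}\}$ (where $\frac{m}{1+\cos\Arg z}=\frac{M}{1-\cos\Arg z}$, so both) — yields $c=0,\,n_+,\,n_-,\,n_++n_-$, i.e. the dimensions $n-1$, $n-1-n_+$, $n-1-n_-$, $n-1-n_+-n_-$, as claimed.
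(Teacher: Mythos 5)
Your proposal is correct and follows essentially the same route as the paper: both identify the fiber over $z$ with an isospectral family of matrices $L(w)$ with $b_i>0$ and fixed $B=|z|$, and appeal to van Moerbeke's inverse spectral parametrization by points on the gap circles of the spectral curve, so the fiber is a torus whose dimension drops by one per closed gap, with $n_+$ gaps closing exactly when $2B(1-\cos\Arg z)=M$ and $n_-$ when $2B(1+\cos\Arg z)=m$. The differences are only presentational: the paper reduces to the untwisted case $w=1$ via a shifted spectrum and cites van Moerbeke's correspondence verbatim, whereas you invoke the Bloch-twisted version directly and make the gap-closing count more explicit.
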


\begin{proof}
In short, this follows from the fact that the periodic Toda lattice is an
integrable dynamical system and its energy levels are the compact
submanifolds. Liouville--Arnold theorem then implies that these
preimages $\po^{-1}(z)$ are tori. To specify the dimensions we
give more details on the theory, related to periodic tridiagonal
matrices.

As before, consider $P(x)=B\tr M(x)$, the monic polynomial in $x$
with coefficients depending on $a_i,b_i\in\Ro$. As follows from
the considerations above, the eigenvalues of matrices $L(1)$ and
$L(-1)$ are the roots of the polynomials $P(x)-2B$ and $P(x)+2B$
respectively. Let
\[
x_1<x_2\leqslant x_3<\cdots<x_{2n-2}\leqslant x_{2n-1}<x_{2n}
\]
be the union of all these roots, so that
$x_{2n},x_{2n-3},x_{2n-4},x_{2n-7},x_{2n-8},\ldots$ are the roots
of $P(x)-2B$ and $x_{2n-1},x_{2n-2},x_{2n-5},x_{2n-6},\ldots$ are
the roots of $P(x)+2B$. The intervals
\[
I_1=[x_{2},x_{3}],\quad I_2=[x_{4},x_{5}],\quad\ldots\quad,
I_{n-1}=[x_{2n-2},x_{2n-1}]
\]
are called the \emph{forbidden zones}. We will call
$I_{n-1},I_{n-3},\ldots$ lower forbidden zones, and
$I_{n-2},I_{n-4},\ldots$ upper forbidden zones as motivated by
Fig.\ref{figForbiddenZones}.

\begin{figure}[h]
\begin{center}
\includegraphics[scale=0.6]{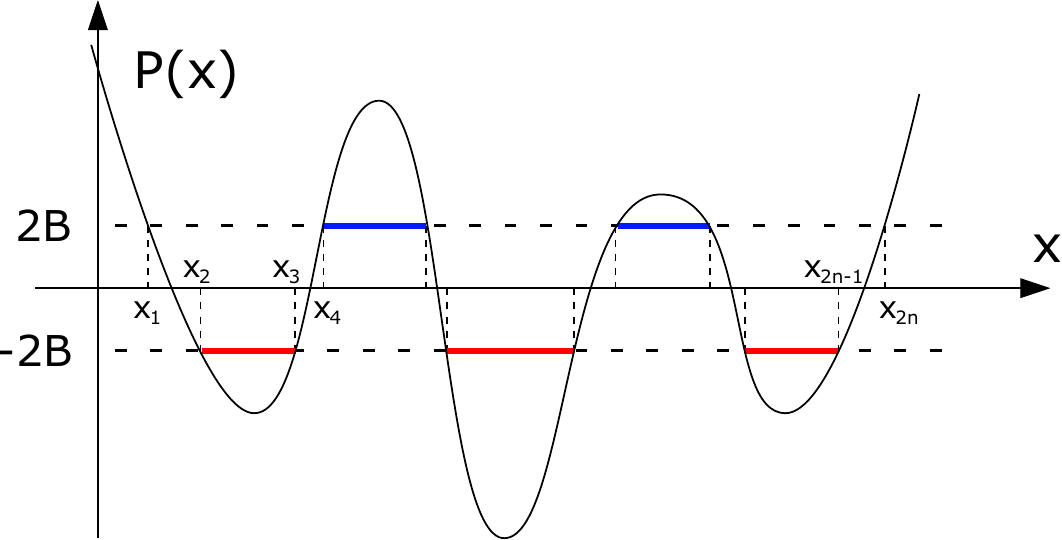}
\end{center}
\caption{Upper and lower forbidden zones}\label{figForbiddenZones}
\end{figure}

Consider the Riemannian surface $\Theta_g$ of the multivalued
function
\edtt{
\[
g(x)=\sqrt{\prod\nolimits_{i=1}^{2n}(x-x_i)}.
\]
}
Over each forbidden interval $I_k$, $k=1,\ldots,n-1$, there lies a
circle $S_k$ on $\Theta_g$. If an interval $I_k$ degenerates to a
point (i.e. $x_{2k}=x_{2k+1}$), the circle $S_k$ also collapses to
a point. Van Moerbeke \cite{VanM} proved

\begin{prop}\label{propMoerbeke}
Real periodic tridiagonal symmetric matrices
$L(\underline{a},\underline{b})$ with the given spectrum $\lambda$, given $B=\prod_1^nb_i$,
and $b_i>0$, are in one-to-one correspondence with
$(n-1)$-tuples $(\mu_1,\ldots,\mu_{n-1})$, where $\mu_k\in S_k$.
\end{prop}

Therefore, for real $z$, the preimage $\po^{-1}(z)$ is
diffeomorphic to a torus $\T=\prod_1^{n-1} S_i$. The dimension of
this torus equals $n-1$ in general, however, when some forbidden
intervals are collapsed, the dimension reduces by the number of
collapsed intervals. The upper forbidden intervals collapse if and
only if the value $2B$ reaches $M$. The number of collapses among
upper intervals equals $n_+$. Similarly, the lower intervals
collapse if $2B$ reaches $m$, and the number of collapses among
lower intervals is $n_-$.

Now let $L(w)$ be an arbitrary matrix with $w\in\Co$, $|w|=1$ and
the given spectrum $\lambda$. Let $\widetilde{\lambda}$ be the set
of roots of the polynomial $\prod(x-\lambda_i)+2B\re w$. It was
mentioned in the proof of Proposition \ref{propImageDescription}
that the matrix $L(w)$ has spectrum $\lambda$ if and only if
$L(1)$ has spectrum $\widetilde{\lambda}$. Thus Proposition
\ref{propMoerbeke} implies the required statement for all
matrices.
\end{proof}

\section{Permutohedral tilings}\label{secTilings}

In this section we study the degenerate locus of the periodic Toda
lattice. Recall that $\Xn^{0}=p^{-1}(0)\subset \Xn$ is the set of all
isospectral matrices with $B=\prod_1^nb_i=0$, and
$\Qn^{0}=\Xn^{0}/T=\po^{-1}(0)$.


We recall some standard facts from combinatorial geometry. Let
$\epsilon_1,\ldots,\epsilon_n$ be the standard basis of
$\Zo^n\cong\Hom(T^n,S^1)$. We assume that $\Zo^n\subset \Ro^n$ and
there is a fixed inner product on $\Ro^n$ such that
$\epsilon_1,\ldots,\epsilon_n$ are orthonormal.

Consider the sublattice $N\subset \Zo^n$ of rank $n-1$ given by \edtt{
\[
N=\left\{\left.\sum\nolimits_{i=1}^na_i\epsilon_i\right|
a_i\in\Zo, \sum a_i=0,\mbox{ and } a_i-a_j\equiv 0\mod n\right\},
\]
}
and let $N_\Ro=N\otimes_\Zo\Ro$ be its real span. Consider the vectors
$\alpha_1,\ldots,\alpha_n$:
\edt{
\[
\alpha_i=(n-1)\epsilon_i-\sum_{j\neq i}\epsilon_j,\qquad i=1,\ldots,n.
\]
}
We see that
\begin{equation}\label{eqAlphaNotation}
\sum_{i=1}^n\alpha_i=0,
\end{equation}
and any $n-1$ of $\alpha_1,\ldots,\alpha_n$ generate the lattice $N$. One
can think about $\alpha_i$'s as the \edt{inward normal} vectors to
the facets of a regular simplex in $\Ro^{n-1}$.

For any subset $S\subset[n]=\{1,\ldots,n\}$ such that
$S\neq\varnothing, [n]$, consider the vector
\begin{equation}\label{eqAlphaSconvention}
\alpha_S=\sum\nolimits_{i\in S}\alpha_i.
\end{equation}
Let $\Pa_{n-1}$ be the Voronoi cell decomposition of $N_\Ro\cong \Ro^{n-1}$
generated by the lattice $N$. In other words, for any $\alpha\in
N$ we consider the Voronoi cell
\[
P_\alpha=\{x\in N_\Ro\mid \dist(x,\alpha)\leqslant \dist(x,\beta)
\mbox{ for any }\beta\in N,\beta\neq\alpha\},
\]
where $\dist$ is the distance determined by the inner product on
$N_\Ro\subset \Ro^n$. Each $P_\alpha$ is a convex
$(n-1)$-dimensional polytope and all these polytopes are the
parallel copies of each other, $P_\alpha=P_0+\alpha$.

\begin{con}
It can be shown \edt{(see details in \cite[Sec.3.8, Thm.7]{ConSloane})} that $P_0$ is the $(n-1)$-dimensional
permutohedron $\Pe^{n-1}$ determined by the inequalities
\[
\Pe^{n-1}=P_0=\left\{x\in N_\Ro\mid \langle \alpha_S,x\rangle\leqslant
\frac12\langle\alpha_S,\alpha_S\rangle, S\in 2^{[n]},
S\neq\varnothing,[n]\right\},
\]
\edt{where $\langle\cdot,\cdot\rangle$ is the inner product on $N$}.
We recall the basic facts about the combinatorics of a
permutohedron. The polytope $\Pe^{n-1}$ is simple, which means
that every codimension $k$ face is contained in exactly $k$
facets. For a proper subset $S\subset[n]$ let $F_S$ denote the
facet of $\Pe^{n-1}$ determined by the support hyperplane
$\langle\alpha_S,x\rangle=
\frac12\langle\alpha_S,\alpha_S\rangle$. Note that $\Pe^{n-1}$ is
centrally symmetric: the facets $F_S$ and $F_{\bar{S}}$ are
opposite to each other whenever $\bar{S}=[n]\setminus S$.

Facets $F_{S_1},\ldots,F_{S_k}$ have nonempty intersection in
$\Pe^{n-1}$ if and only if the subsets $\{S_1,\ldots,S_k\}$ form a
chain in the Boolean lattice $2^{[n]}$. If
$\sigma=(S_1\subset\cdots\subset S_k)$ is such a chain, we denote
by $F_\sigma$ the face $F_{S_1}\cap\cdots\cap F_{S_k}$ of the
permutohedron. Each face of $\Pe^{n-1}$ is known to be a product
of permutohedra of smaller dimensions.

We denote by $F_S(P_\alpha)$ (resp. $F_\sigma(P_\alpha)$) the
corresponding facets (resp. faces) of the the Voronoi cell
$P_\alpha$ to distinguish different copies of a permutohedron in
the Voronoi diagram. It can be seen that
\begin{equation}\label{eqNeighbors}
F_S(P_\alpha)=F_{\bar{S}}(P_{\alpha+\alpha_S}).
\end{equation}
A facet of each cell is \edt{adjacent} to an opposite facet of a neighboring
cell.
\end{con}

We formulate a general construction to precede a particular case
needed in the proof of Theorem \ref{thmImage}.

\begin{con}\label{conGeneralVoronoiQuotient}
Let $\wh{N}\subseteq N$ be a sublattice of finite index, i.e.
$q=|N/\wh{N}|<\infty$. Consider the quotient $N_\Ro/\wh{N}$. Since
$\wh{N}$ is a cocompact lattice, this quotient is a torus
$\T^{n-1}$. The action of $\wh{N}$ by parallel shifts preserves
the Voronoi diagram, therefore we have a cell subdivision of the
torus $\T^{n-1}\cong N_\Ro/\wh{N}$. There are $q$ maximal cells in
this subdivision, each is a parallel copy of a permutohedron.
\end{con}

\begin{ex}
A natural example is $\wh{N}=N$. In this case the torus
is given by identifying the opposite facets of a single
permutohedron. The cell structure on a torus given by this
identification is known: the corresponding partially ordered set
was introduced and studied by Panina
\cite{Panina} under the name of \emph{cyclopermutohedron}. This poset
has a natural combinatorial description.
\end{ex}

For the considerations of this paper we need another sublattice.

\begin{con}\label{conWonderfulSublattice}
Let $N'\subset N$ be the sublattice generated by the vectors
\[
\beta_k=\alpha_k-\alpha_{k+1},\qquad k=1,\ldots,n-1.
\]
Note that
$\beta_{n-1}=\alpha_{n-1}-\alpha_n=2\alpha_{n-1}+\alpha_1+\cdots+\alpha_{n-2}$
according to \eqref{eqAlphaNotation}. It can be shown that
$N/N'$ is the cyclic group of order $n$. Indeed, in the quotient
group $N/N'$ we have the identities
\begin{equation}\label{eqModLattice}
[\alpha_1]=\cdots=[\alpha_n],\qquad
n[\alpha_1]=[\alpha_1]+\cdots+[\alpha_{n-2}]+2[\alpha_{n-1}]=0.
\end{equation}
\end{con}
\begin{defin}\label{definWonderfulDecomposition}
Let $\PT^{n-1}$ be the cell decomposition of a torus $\T^{n-1}$
obtained as a quotient of Voronoi diagram of the space $N_\Ro$ by
the sublattice $N'$. We call $\PT^{n-1}$ the \emph{wonderful cell
decomposition} of a torus.
\end{defin}

The wonderful decomposition $\PT^{n-1}$ has $n$ maximal cells. The
cells $P_\alpha$ and $P_{\alpha+\beta}$ are identified in
$\PT^{n-1}$ whenever $\beta\in N'$. We denote the resulting cell
of $\PT^{n-1}$ by $P_{[\alpha]}$. Relations \eqref{eqModLattice}
imply
\[
[\alpha_S]=|S|[\alpha_1],\qquad [n\alpha_1]=[0].
\]

\begin{lem}\label{lemGluingRulesForZ}
Let $1\leqslant k<m\leqslant n$. In the cell complex $\PT^{n-1}$
we have
\[
F_S(P_{k[\alpha_1]})=F_{\bar{S}}(P_{m[\alpha_1]}),
\]
where $S$ is any subset of $[n]$ of cardinality $m-k$.
\end{lem}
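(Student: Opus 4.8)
The plan is to unwind the definition of $\PT^{n-1}$ as the quotient of the Voronoi diagram $\Pa_{n-1}$ by the sublattice $N'$, and then use the neighbor-gluing rule \eqref{eqNeighbors}. Recall that in the full Voronoi diagram of $N_\Ro$, equation \eqref{eqNeighbors} states
\[
F_S(P_\alpha)=F_{\bar S}(P_{\alpha+\alpha_S})
\]
for every $\alpha\in N$ and every proper nonempty subset $S\subset[n]$. This is the only real geometric input; everything else is bookkeeping about the group $N/N'\cong\Zo/n\Zo$.

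First I would fix representatives. By the relations \eqref{eqModLattice} we have $[\alpha_S]=|S|[\alpha_1]$ in $N/N'$, and $[n\alpha_1]=[0]$, so the $n$ maximal cells of $\PT^{n-1}$ are exactly $P_{[0]},P_{[\alpha_1]},\ldots,P_{(n-1)[\alpha_1]}$, and the index $k[\alpha_1]$ is well-defined modulo $n$. Given $1\leqslant k<m\leqslant n$ and a subset $S\subseteq[n]$ with $|S|=m-k$, pick an honest lattice representative: choose $\alpha\in N$ with $[\alpha]=k[\alpha_1]$ and such that $\alpha+\alpha_S$ represents $m[\alpha_1]$ — this is automatic since $[\alpha+\alpha_S]=k[\alpha_1]+|S|[\alpha_1]=m[\alpha_1]$. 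Apply \eqref{eqNeighbors} in the Voronoi diagram upstairs: $F_S(P_\alpha)=F_{\bar S}(P_{\alpha+\alpha_S})$. Now push this identity down to the quotient torus $\T^{n-1}=N_\Ro/N'$: the quotient map sends $P_\alpha$ to $P_{[\alpha]}=P_{k[\alpha_1]}$ and $P_{\alpha+\alpha_S}$ to $P_{m[\alpha_1]}$, and it sends the facet $F_S$ of a cell to the facet $F_S$ of its image (the labels $S$ are intrinsic to the permutohedral combinatorics of each cell and are preserved by the translation action of $N'$). Hence $F_S(P_{k[\alpha_1]})=F_{\bar S}(P_{m[\alpha_1]})$ in $\PT^{n-1}$, which is the claim.

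The one point that deserves care — and I expect it to be the main obstacle, though a mild one — is checking that the facet label $F_S$ descends unambiguously to the quotient, i.e. that distinct lattice lifts of the same cell $P_{k[\alpha_1]}$ induce the same labelling of that cell's facets, and likewise that the identification $F_S(P_{k[\alpha_1]})=F_{\bar S}(P_{m[\alpha_1]})$ does not depend on which representative pair $(\alpha,\alpha+\alpha_S)$ we picked. This follows because $N'$ acts on the Voronoi diagram by translations that carry $P_\beta$ to $P_{\beta+\gamma}$ ($\gamma\in N'$) and carry $F_S(P_\beta)$ to $F_S(P_{\beta+\gamma})$ — the facet normals $\alpha_S$ are fixed by translation — so the labelled cell structure is $N'$-equivariant and the quotient labelled cell structure on $\PT^{n-1}$ is well-defined. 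One should also note $1\leqslant m-k\leqslant n-1$, so $S$ is a proper nonempty subset and \eqref{eqNeighbors} genuinely applies; the extreme value $|S|=n$ (i.e. $m-k=n$) is excluded by $k<m\leqslant n$ together with $k\geqslant 1$, and $|S|=0$ is excluded by $k<m$. With these remarks the proof is a direct transfer of \eqref{eqNeighbors} through the quotient, requiring no further computation.
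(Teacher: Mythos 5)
Your proof is correct and follows essentially the same route as the paper: the only geometric input is the neighbor relation \eqref{eqNeighbors}, combined with the computation $[\alpha_S]=|S|[\alpha_1]$ in $N/N'$ to identify the cells downstairs. The paper simply fixes the concrete representative $\alpha=\alpha_{S'}$ with $S'$ disjoint from $S$, $|S'|=k$, so that $\alpha+\alpha_S=\alpha_{S'\sqcup S}$, while you work with an arbitrary lift and spell out the $N'$-equivariance of the facet labelling, which the paper leaves implicit in its notation.
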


\begin{proof}
Choose any subset $S'$ such that $|S'|=k$ and $S'$ is disjoint from
$S$. According to \eqref{eqNeighbors} we have
\[
F_S(P_{k[\alpha_1]})=F_S(P_{[\alpha_{S'}]})=F_{\bar{S}}(P_{[\alpha_{S'}+\alpha_S]})=
F_{\bar{S}}(P_{[\alpha_{S'\sqcup
S}]})=F_{\bar{S}}(P_{m[\alpha_1]}),
\]
which proves the statement.
\end{proof}

In the following, we denote the maximal cells $P_{k[\alpha_1]}$ by
$\PT_k$. Now we return to the space of tridiagonal matrices.
Recall that $Y_k$ denotes the space of all isospectral matrices
$L(\underline{a},\underline{b})$ with $b_k=0$, for $k=1,\ldots,n$.
Let $Q_k$ denote the orbit space $Y_k/T$. We have
$\Qn^{0}=\bigcup_1^nQ_k$.

\begin{thm}\label{thmTorusForZero}
The space $\Qn^{0}$ can be identified with $\PT^{n-1}$ so that the
subspaces $Q_k$ are identified with $\PT_k$.
\end{thm}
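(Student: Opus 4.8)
The plan is to build the identification $\Qn^0 \cong \PT^{n-1}$ piece by piece, matching each stratum $Q_k = Y_k/T$ with the permutohedral cell $\PT_k$, and then checking that the gluing of the $Q_k$'s along their common boundaries agrees with the gluing rules for the wonderful subdivision established in Lemma \ref{lemGluingRulesForZ}. First I would recall that $Y_k$, the space of isospectral tridiagonal Hermitian matrices with $b_k = 0$, is diffeomorphic to $Y_n$, the classical Tomei manifold; its $T$-action is locally standard and the orbit space $Y_k/T$ is a permutohedron $\Pe^{n-1}$ (by \cite{Tomei,DJ,BFR}). So for each $k$ there is a homeomorphism $\phi_k \colon Q_k \toiso \Pe^{n-1}$. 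The natural labeling of the facets of this permutohedron is by proper subsets $S \subset [n]$: the facet $F_S$ of $Q_k$ corresponds to the locus where, in addition to $b_k = 0$, some further off-diagonal entries vanish — concretely, $F_S(Q_k)$ is the face where the tridiagonal matrix splits into blocks along the indices of $S$ (i.e. $b_j = 0$ for the appropriate further index determined by $S$). One must set up this combinatorial dictionary carefully so that $F_S(Q_k)$ corresponds, under $\phi_k$, to the facet $F_S(\PT_k)$ of the permutohedral cell, with the same cyclic conventions used in Construction \ref{conWonderfulSublattice}.

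Next I would analyze the intersections $Q_k \cap Q_m$ inside $\Qn^0$. A matrix lies in $Y_k \cap Y_m$ exactly when $b_k = b_m = 0$; such a matrix decomposes (after cyclic relabeling) into two tridiagonal blocks, one of size $m-k$ and one of size $n-(m-k)$, and the isospectral condition forces the spectrum to split between the two blocks as a partition of $\lambda$ into a subset of size $|S| = m-k$ and its complement. Thus $Q_k \cap Q_m$ is a disjoint union, over $(m-k)$-element subsets $S$ of the eigenvalue-index set, of products of two smaller Tomei orbit spaces, i.e. products of smaller permutohedra — which is precisely a codimension-one face $F_S(Q_k) = F_{\bar S}(Q_m)$ of each. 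This matches exactly the content of Lemma \ref{lemGluingRulesForZ}: $F_S(\PT_k) = F_{\bar S}(\PT_m)$ for $|S| = m-k$. So the combinatorial gluing data of $\{Q_k\}$ and of $\{\PT_k\}$ coincide, and the individual cell homeomorphisms $\phi_k$ can be chosen compatibly on overlaps (using that a face of a permutohedron is a product of permutohedra, so the identifications are canonical up to the finite combinatorial data, which has already been matched).

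Finally I would assemble the $\phi_k$ into a global homeomorphism $\Qn^0 \toiso \PT^{n-1}$. The key points to verify are: (i) every point of $\Qn^0$ lies in some $Q_k$, so the map is defined everywhere; (ii) the maps agree on all pairwise (hence all multiple) intersections, by the compatibility arranged above, so they glue to a continuous map; (iii) the glued map is a bijection — surjectivity is clear since every cell $\PT_k$ is hit, and injectivity follows because two points of $\Qn^0$ map to the same point of $\PT^{n-1}$ only if they lie in a common face, where $\phi_k$ is already injective; and (iv) since $\Qn^0$ is compact and $\PT^{n-1}$ is Hausdorff, a continuous bijection is a homeomorphism. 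The main obstacle I anticipate is step (ii): getting the facet-labelings to match requires tracking precisely how the cyclic permutation of rows and columns (used to relate $Y_k$ to the standard Tomei manifold $Y_n$) interacts with the subset-labeling of permutohedral faces, so that the shift $k \mapsto k+1$ on the index of $\PT_k$ corresponds correctly to the shift $[\alpha_S] \mapsto [\alpha_S] + [\alpha_1]$ in $N/N'$. This is essentially a careful bookkeeping argument, but it is where an error would most easily creep in, and it is the heart of why the \emph{wonderful} sublattice $N'$ — rather than $N$ itself — is the right one.
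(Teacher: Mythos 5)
Your proposal follows essentially the same route as the paper's proof: identify each $Q_k$ with a permutohedron $\Pe^{n-1}$ via Tomei's theorem, label its facets by the subset of eigenvalues occupying the split-off block, observe that the intersections $Q_k\cap Q_m$ realize the identifications $F_S(Q_k)=F_{\bar{S}}(Q_m)$ for $|S|=m-k$, and match these gluing rules with Lemma \ref{lemGluingRulesForZ}. The additional care you give to assembling the local identifications into a global homeomorphism and to the cyclic-shift bookkeeping is precisely the detail the paper leaves implicit, so the argument is correct.
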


\begin{proof}
The orbit space $Q_0=Q_n$ is identified with the space of all
tridiagonal symmetric real matrices
\[
L=\begin{pmatrix}
a_1 & b_1& 0&\cdots& 0\\
b_1& a_2 & b_2 &  & 0\\
0 & b_2 & a_3 & \ddots & \vdots \\
\vdots&& \ddots& \ddots &b_{n-1}\\
0& 0&\cdots& b_{n-1} &a_n
\end{pmatrix}
\]
with $b_i\geqslant 0$ and the given simple spectrum $\lambda$. It
is known (see \cite{Tomei}) that $Q_0$ is diffeomorphic to a
permutohedron $\Pe^{n-1}$ as a manifold with corners. The facet
$F_S(\Pe^{n-1})$ corresponds to the subset of $Q_0$, which
consists of matrices $L$ such that $b_{|S|}=0$ and the eigenvalues
$\{\lambda_i\mid i\in S\}$ are distributed in the first $(|S|\times
|S|)$-block.

Similar considerations are valid for other spaces $Q_k$: this
can be shown by cyclic permutation of rows and columns of $L$.
Indeed, the set $Q_k$ can be identified with $\Pe^{n-1}$ in such
way that the facet $F_S(\Pe^{n-1})$ consists of all matrices with
the property
\[
b_k=0,\qquad b_{k+|S|}=0,
\]
and the block between $k$-th and $(k+|S|)$ rows and columns has
eigenvalues $\{\lambda_i\mid i\in S\}$.

It can be seen that the faces $F_S(Q_k)$ and $F_{\bar{S}}(Q_{m})$
represent the same set of matrices for $1\leqslant k<m\leqslant n$
and $|S|=m-k$. Therefore, $F_S(Q_k)=F_{\bar{S}}(Q_{m})$ in
$\Qn^0$. These gluing rules for the cells in $\Qn^0$ coincide with
the gluing rules for $\PT_k$ in $\PT^{n-1}$ according to Lemma
\ref{lemGluingRulesForZ}.
\end{proof}

\edt{For convenience introduce the cyclic
notation: $Q_k=Q_{k+n}$, for any $k\in \Zo$.}

\begin{ex}
Right part of Fig.\ref{figHexes} shows the space
$Q_{3,\lambda}^{0}=\PT^2$. This example was described in details
by van Moerbeke \cite{VanM}. The 1-skeleton of $\PT^2$ is shown on the left. As
an abstract graph, it is isomorphic to the complete bipartite
graph $K_{3,3}$. This graph is a GKM-graph of the complete flag
variety $\Fl_3$, see details in \cite{AyLoc}.
\end{ex}

\begin{figure}[h]
\begin{center}
\includegraphics[scale=0.2]{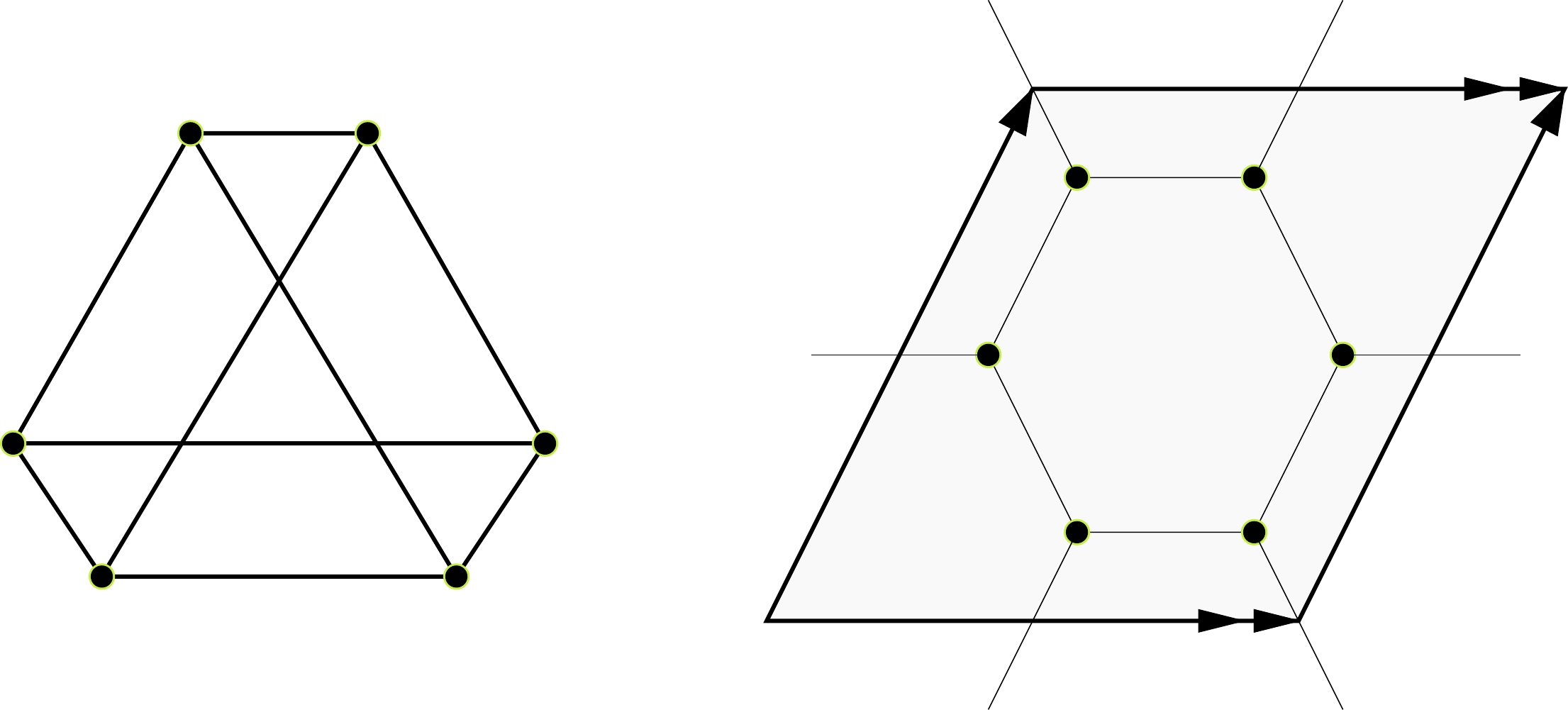}
\end{center}
\caption{The wonderful cell decomposition
$Q_{3,\lambda}^{0}=\PT^2$.}\label{figHexes}
\end{figure}

\begin{rem}
Let us briefly sketch the phase portrait of the Toda flow on the
degenerate set of orbits $\Qn^0$. Let $v\in N_\Ro^*$ be a generic
linear function on $N_\Ro$. Take any face of any permutohedron of
the Voronoi diagram in $N_\Ro$. On each such polytope consider a
flow, which moves all points in the interior of $P$ to the vertex
maximizing the linear function $v$. The flow looks the same
on all Voronoi cells, thus we have an induced flow on the torus
$\PT^{n-1}=N_\Ro/N'$.

This picture describes the Toda flow on $\Qn^0\cong \PT^{n-1}$.
Indeed, Toda flow degenerates to the flow of a non-periodic Toda lattice on each
permutohedron $Q_i$, and its Morse-like behavior is well-known
(see \cite{DNT}). For any block tridiagonal matrix, the Toda flow
``sorts'' the diagonal elements within each block \cite{Tomei}.

The phase portrait for $n=3$ is shown on the left part of
Fig.\ref{figWonderfulFlow}. The oddity of the phase portrait near
equilibria points is explained by the fact that the orbit space
$\Qn$ is not smooth at these points.

Note that for $B\neq 0$, the Toda flow exhibits
Liouville--Arnold behavior. The equilibria points disappear,
however the flow still follows some direction $v$ on a torus, see
Fig.\ref{figWonderfulFlow}, right part.
\end{rem}

\begin{figure}[h]
\begin{center}
\includegraphics[scale=0.3]{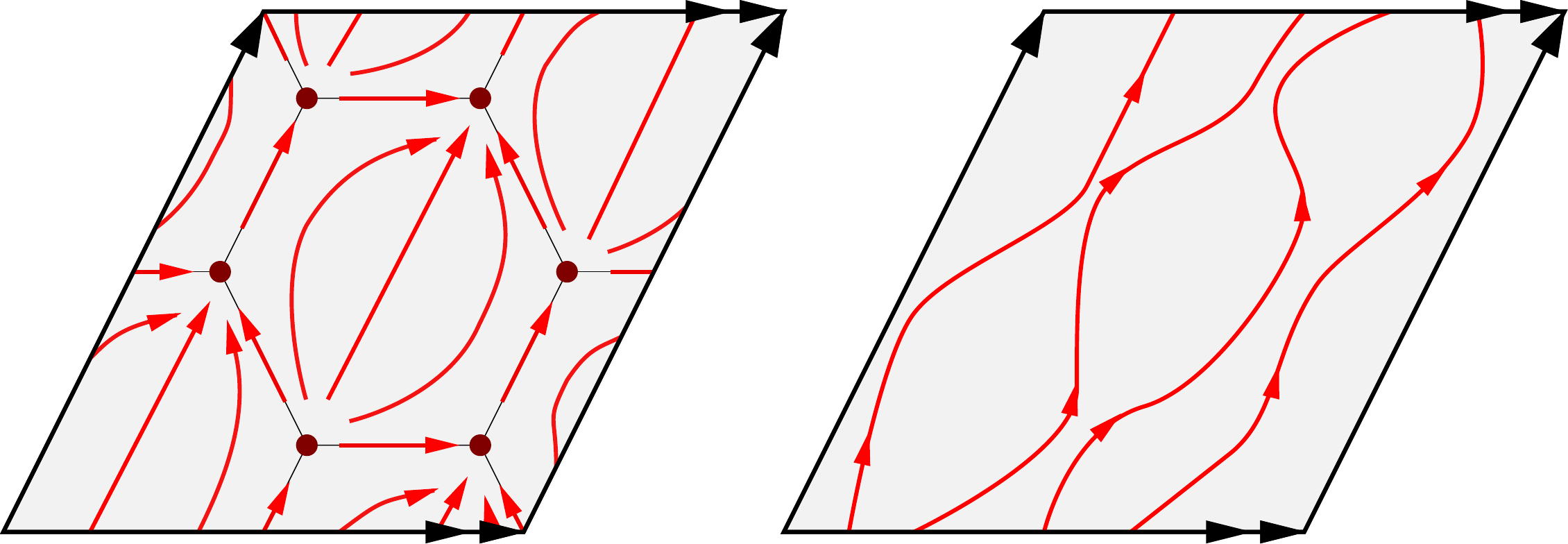}
\end{center}
\caption{The Toda flow on the level set $B=\const$ of the orbit
space $Q_{3,\lambda}^0$. Left part shows the case $B=0$. Right
part shows the case $B=\const\neq 0$}\label{figWonderfulFlow}
\end{figure}

\begin{rem}
Each $k$-dimensional cell of the cell subdivision $\PT^{n-1}$ lies
in exactly $n-k$ different maximal cells. This means there exists
a dual simplicial cell subdivision $\KPT^{n-1}$. In Section
\ref{secNeighborhoodCombinatorics} we recall the definition of a
simplicial poset which is a useful combinatorial notion to study
simplicial cell subdivisions.
%

Note that the simplicial poset $\KPT^{n-1}$ minimizes the number
of vertices among all simplicial cell subdivisions of the torus
$\T^{n-1}$. Indeed, any $(n-1)$-dimensional simplex of such
subdivision has $n$ distinct vertices, therefore a simplicial cell
subdivision of $\T^{n-1}$ should have at least $n$ vertices. This
number is achieved at $\KPT^{n-1}$.
\end{rem}

\begin{rem}
Note that any closed connected $(n-1)$-manifold admits a simplicial cell
subdivision with exactly $n$ vertices. This result was proved in \cite{FGG},
where such subdivisions (or, their equivalent combinatorial representations)
were called \emph{the crystallizations}. Previous remark shows
that $\KPT^{n-1}$ provides an explicit crystallization for the torus $\T^{n-1}$.
\end{rem}

Propositions \ref{propImageDescription},
\ref{propPreimageOfNonzero}, and Theorem \ref{thmTorusForZero}
conclude the proof of Theorem \ref{thmImage}.

\section{Topology near degeneration
locus}\label{secNeighborhoodGeneral}

In this section we study the topology of a small neighborhood of
$\Xn^0$. The space $\Xn$ is a smooth manifold in vicinity of
$\Xn^0$, see Construction \ref{conDegenerationPoints}.

\begin{rem}\label{remFreeOutside}
Note that the $T$-action is free outside $\Xn^0$ and admits a
section given by the formula \eqref{eqRotatedMatrix}. However, the
free part of the action is larger than \edt{the outside of} $\Xn^0$: the action is also
free over the interiors of facets of $\Qn^0\cong \PT^{n-1}$. The
whole free action $\Xn^{\free}\to\Xn^{\free}/T$ does not admit a
section, as explained below.
\end{rem}

Recall that $p\colon \Xn\to \Co$ maps a matrix
$L(\underline{a},\underline{b})$ to the product $B=\prod_1^nb_i$.
For a small~$\varepsilon$ consider the preimage of points close to
zero:
\[
\Xneps=p^{-1}(\{z\in\Co\mid |z|\leqslant \varepsilon\}).
\]
According to Proposition \ref{propPreimageOfNonzero}, $\Xneps$ is
a manifold with boundary, the boundary $\dd\Xneps$ being the subset
\begin{equation}\label{eqEpsTorus}
\Xn^{=\eps}=p^{-1}(\{|z|=\varepsilon\})\cong S_{\eps}^1\times
\T^{n-1}\times T^{n-1}\cong T^{2n-1},
\end{equation}
where $S_{\eps}^1=\{z\mid |z|=\eps\}$, $\T^{n-1}$ is
the Liouville--Arnold torus, and $T^{n-1}$ \edt{is} the acting torus.

It will be useful to incorporate the circle $S_{\eps}^1$
into the action to obtain a $T^n$-action on $\Xneps$.

\begin{con}\label{conModelOfNeighb}
Consider a topological manifold with boundary
$W=\T^{n-1}\times[0,1]$. Its boundary consists of two connected
components
\edt{
\[
\dd W=\dd_0W\sqcup\dd_1W,\qquad
\dd_0W=\T^{n-1}\times\{0\},\quad\dd_1 W=\T^{n-1}\times\{1\}.
\]
}
On the left component $\dd_0W$, we introduce the wonderful cell
structure $\PT^{n-1}$, constructed in Section \ref{secTilings}.
This procedure subdivides $\dd_0W$ into $n$ permutohedra
$\PT_1,\ldots,\PT_n$ of dimension $n-1$ so that every cell of
dimension $k$ lies in $n-k$ top-dimensional cells. This makes $W$
a manifold with corners (understood in a broad topological sense).
We leave the right boundary component $\dd_1 W$ unchanged: no face
structure is imposed on $\dd_1 W$.

Let $T^n=\{t=(t_1,\ldots,t_n)\mid |t_i|=1\}$ be a compact
$n$-torus and $T_I$, $I\subseteq[n]$ be its coordinate subtorus,
\[
T_I=\{t\in T^n\mid t_j=1, j\notin I\}.
\]
Consider the space
\[
Y=W\times T^n/\sim
\]
where $(r,t)$ and $(r',t')$ are identified whenever $r=r'$ lies in
the intersection of facets $\{\PT_i\mid i\in I\}$ and $t^{-1}t'\in T_I$
for some subset $I\subseteq[n]$. This construction can be
considered as \edt{a} particular case of either moment-angle manifold
construction for simplicial posets (see \cite{PL,BPnew}) or the
construction of locally standard actions (see \cite{Yo}). The
space $Y$ is a particular case of the collar models introduced in
\cite{Ay3}.

The space $Y$ is a manifold with boundary $\dd Y=\dd_1 W\times
T^n\cong \T^{n-1}\times T^n$. It carries the action of $T^n$ which
is free on the boundary and its orbit space is $W$.

Consider the induced action of the subtorus
\[
T^{n-1}=\{t_1t_2\cdots t_n=1\}\subset T^n
\]
on the space $Y$. It can be checked \edt{(details can be found in \cite[Construction 5.9]{AyLoc})}
that the orbit space $Y/T^{n-1}$ is homeomorphic to
$\T^{n-1}\times D^2$.
\end{con}

\begin{thm}\label{thmTopologyNeighb}
The space $\Xneps$ is $T^{n-1}$-equivariantly homeomorphic to the
collar model~$Y$.
\end{thm}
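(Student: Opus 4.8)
The plan is to build an explicit $T^{n-1}$-equivariant homeomorphism between $\Xneps$ and the collar model $Y$ by comparing both spaces to a common ``cut-open'' description. First I would recall from Proposition \ref{propPreimageOfNonzero} and the identity \eqref{eqEpsTorus} the structure of $\Xneps$ over the punctured disk: the restriction $p\colon\Xneps\setminus\Xn^0\to\{0<|z|\leqslant\eps\}$ is, after choosing the section \eqref{eqRotatedMatrix}, a trivial bundle with fiber $\T^{n-1}\times T^{n-1}$, where $\T^{n-1}$ is the Liouville--Arnold torus of the integrable periodic Toda system and $T^{n-1}$ is the acting torus. The key point is that as $|z|\to 0$ the Liouville--Arnold torus $\po^{-1}(z)$ does \emph{not} collapse (its dimension stays $n-1$, by Proposition \ref{propPreimageOfNonzero}, since over $z=0$ we are in the degenerate locus $\PT^{n-1}$ which is $(n-1)$-dimensional), but certain subtori of the \emph{acting} torus $T^{n-1}$ degenerate according to the face structure of $\Qn^0\cong\PT^{n-1}$ given by Theorem \ref{thmTorusForZero}: over the interior of a $k$-face of $\PT^{n-1}$ that lies in $n-k$ maximal cells $\PT_{i}$, $i\in I$ with $|I|=n-k$, the stabilizer is the coordinate subtorus $T_I\subset T^n$ intersected with $\{t_1\cdots t_n=1\}$. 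This is exactly the gluing relation $\sim$ defining $Y$ in Construction \ref{conModelOfNeighb}.

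Second, I would make the circle $S^1_\eps=\{|z|=\eps\}$ into an extra acting circle so as to upgrade the $T^{n-1}$-action to a $T^n$-action on $\Xneps$. Concretely, the formula \eqref{eqTorusActionCoords} shows that rotating one off-diagonal entry, say $b_n\mapsto sb_n$ with $s\in S^1$, multiplies $B$ by $s$ while leaving the spectrum unchanged only if we simultaneously compensate — but on the \emph{orbit space} $\Qn$ the value $B$ rotates freely, so the honest statement is that the $S^1$ acting by $b_n\mapsto sb_n$ (equivalently, the coordinate circle $T_{\{n\}}\subset T^n$ modulo the diagonal) together with $T^{n-1}$ generates a $T^n$-action on $\Xneps$ whose $B$-map is $T^n$-equivariant for the rotation action of $T^n/T^{n-1}\cong S^1$ on the target disk $D^2_\eps$. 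Under this $T^n$-action the orbit space $\Xneps/T^n$ is identified, via $(p,\text{section})$, with $\T^{n-1}\times[0,\eps]$: over $|z|=\eps$ the $T^n$-action is free (all of $\Xneps\setminus\Xn^0$ consists of free $T^{n-1}$-orbits, and the extra circle acts freely by rotating $B\neq 0$), and over $|z|=0$ we recover $\Qn^0\cong\PT^{n-1}$ with its wonderful face structure and the prescribed isotropy subgroups $T_I$. This is precisely the recipe $W=\T^{n-1}\times[0,1]$, $Y=W\times T^n/\sim$ of Construction \ref{conModelOfNeighb}.

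Third, I would invoke the uniqueness/classification part of the locally standard theory (the ``collar model'' result of \cite{Ay3}, together with the complexity-one classification of \cite{AyLoc} already used in Proposition \ref{propSmoothOverZero}): a $T^n$-manifold with boundary whose action is locally standard, whose orbit space is a manifold with corners of the form $(\text{closed manifold})\times[0,1]$ with prescribed (here: the wonderful) face structure on one end and no face structure on the other, and whose action is free over the boundary, is $T^n$-equivariantly homeomorphic to the model $Y$ built from that combinatorial data — the relevant topological ``characteristic pair'' is determined, and any ambiguity (a choice of principal $T^n$-bundle over the contractible-on-faces orbit space, hence trivial) is absorbed. Having matched the $T^n$-structures, restricting to the subtorus $T^{n-1}=\{t_1\cdots t_n=1\}$ gives the desired $T^{n-1}$-equivariant homeomorphism $\Xneps\cong Y$.

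The main obstacle I anticipate is verifying that the $T^{n-1}$-action on $\Xneps$ genuinely extends to a \emph{locally standard} $T^n$-action with orbit space exactly $W$ carrying the wonderful face structure — i.e.\ that the isotropy data read off from the Toda/\Sch picture near $\Xn^0$ (Construction \ref{conDegenerationPoints}, Theorem \ref{thmTorusForZero}) matches the combinatorial isotropy data $\{T_I\}$ of Construction \ref{conModelOfNeighb} on the nose, including multiplicities and the non-collapsing of the Liouville--Arnold factor. Once this local model near each orbit is pinned down, gluing these local models is routine because the orbit space $W$ deformation retracts onto $\PT^{n-1}$ and all proper faces of $\PT^{n-1}$ are acyclic (indeed products of permutohedra), so the classifying data of the $T^n$-bundle over the free part extends uniquely; the global identification with $Y$ then follows from the collar-model uniqueness theorem of \cite{Ay3}.
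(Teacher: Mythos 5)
There is a genuine gap in the middle of your argument, and it is exactly at the step you yourself flag as the main obstacle. Your second paragraph proposes to upgrade the $T^{n-1}$-action on $\Xneps$ to a $T^n$-action by letting an extra circle act via $b_n\mapsto sb_n$. This map does not preserve the isospectral set: the characteristic polynomial of $L(\underline{a},\underline{b})$ depends on $\re B$ (this is visible from the spectral-curve relation \eqref{eqSpecCurve2}, where the constant term of $\det(x-L)$ involves $B+\overline{B}$), so rotating $B$ changes the spectrum. The conjugation action \eqref{eqTorusActionCoords} always multiplies $B$ by $\prod_i t_it_{i+1}^{-1}=1$, so no subgroup of the conjugation torus rotates $B$; and the remark that ``on the orbit space $B$ rotates freely'' does not produce an action on $\Xneps$ itself. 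Since the rest of your plan (identifying $\Xneps/T^n$ with $W=\T^{n-1}\times[0,1]$, matching the isotropy data $\{T_I\}$, and invoking a uniqueness theorem for locally standard $T^n$-manifolds with boundary) presupposes that this $T^n$-action exists and is locally standard, the argument does not get off the ground as written. Note that the paper's introduction says the extension to a $T^n$-action exists only \emph{up to homeomorphism}, i.e.\ it is a consequence of Theorem \ref{thmTopologyNeighb} (transport the $T^n$-action from $Y$), not an ingredient of its proof.

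The paper instead stays entirely within the complexity-one framework of \cite{AyLoc} and applies its Proposition 5.7 directly to the two $T^{n-1}$-spaces $\Xneps$ and $Y$: a strictly appropriate action in general position on a manifold with boundary, with orbit space $Q_M=M\times D^2$, sponge equal to the $(n-2)$-skeleton $M^{(n-2)}\times\{0\}$, and trivial principal bundle on the boundary, is determined up to equivariant homeomorphism by its local Euler classes at the fixed points. One then checks: both orbit spaces are $\T^{n-1}\times D^2$ with sponge the $(n-2)$-skeleton of $\PT^{n-1}$ (Theorem \ref{thmTorusForZero}); the boundary bundles are trivial (the section \eqref{eqRotatedMatrix} for $\Xneps$, the construction of $Y$ for $Y$); and the tangent representations at the fixed points $L_\sigma$ coincide with the standard representation of $\{t_1\cdots t_n=1\}$ on $\Co^n$, so the local Euler classes agree. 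If you want to salvage your approach, you would need to replace the explicit formula $b_n\mapsto sb_n$ by an honest construction of a spectrum-preserving circle action rotating $B$ (for instance via the Liouville--Arnold/action-angle structure away from $\Xn^0$ together with a careful analysis of its extension over $\Xn^0$), which is substantially harder than the route through \cite[Prop.~5.7]{AyLoc}.
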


\begin{proof}
In \cite{AyLoc} we developed a topological theory of complexity
one torus actions. The main concepts are recalled here. By
definition, an effective action of $T\cong T^{n-1}$ on $X=X^{2n}$
is called a \emph{strictly appropriate action in general position}, if
the following conditions hold.
\begin{enumerate}
\item The action has finitely many fixed points.
\item Stabilizers of all points are connected.
\item Each connected component of each equivariant skeleton $X_j$ contains a fixed point.
\item For every fixed point $x$, the weights
$\alpha_1,\ldots,\alpha_n\in \Hom(T^{n-1},S^1)\cong \Zo^{n-1}$ of
the tangent representation are in general position, which means
that every $n-1$ of them are linearly independent.
\end{enumerate}
For such actions we proved that the orbit space $Q=X/T^{n-1}$ is a \edt{closed}
topological manifold of dimension $n+1$. The orbits of dimensions
less than $n-1$ form a subset $Z\subset Q$ which is called a
\emph{sponge}. A sponge is an $(n-2)$-dimensional subset of $Q$
locally modeled by a $(n-2)$-skeleton of $\Rg^n$. The free part of
action gives the principal $T^{n-1}$-bundle
\[
X^{\free}\to Q\setminus Z.
\]
This bundle is classified by the cohomology class $e\in
H^2(Q\setminus Z;H_1(T^{n-1}))$, which is called the \emph{Euler
class} of the action. Proposition 3.7 of \cite{AyLoc} asserts that
equivariant topological type of $X$ is uniquely determined by the
triple $(Q,Z,e)$ (which essentially means that the information on
stabilizers of the action can be recovered from the class $e$).


\edt{Let $x\in Z$ be an orbit with nontrivial stabilizer, and $U_x$ be a small
open disc neighborhood of $x$ in $Q$.} The inclusion $i_x\colon U_x\to Q$ induces a homomorphism
\[
i_x^*\colon H^2(Q,Q\setminus Z;H_1(T^{n-1}))\to
H^2(U_x,U_x\setminus Z;H_1(T^{n-1})).
\]
The class $e_x=i_x^*(e)\in H^2(U_x,U_x\setminus Z;H_1(T^{n-1}))$
is called the local Euler class at $x$. It was noted in
\cite{AyLoc} that local Euler classes are always nonzero. In
particular, the global Euler class is always non-zero for suitable
actions of complexity one.

These constructions work similarly if $X$ is a manifold with
boundary, and the torus action is free on the boundary. In this
case, $Q=X/T^{n-1}$ is a manifold with boundary $\dd X/T^{n-1}$.
The sponge of the action lies in the interior of $Q$. Under
certain conditions the local Euler classes at fixed points
determine the space $X$ uniquely.

Assume $Q$ has the form $Q_M=M\times D^2$, where $M$ is a closed
$(n-1)$-manifold with a fixed simple cell decomposition. Assume
that the sponge $Z_M$ is the $(n-2)$-skeleton of this cell
structure, and we have
\[
Z_M=M^{(n-2)}=M^{(n-2)}\times\{0\}\subset M\times D^2=Q_M.
\]

\begin{prop}[{\cite[Prop.5.7]{AyLoc}}]\label{propClassificationEarlier}
Let $X$ be a manifold with boundary, which carries a strictly
appropriate torus action in general position such that the orbit
space and the sponge of the action are given by $(Q_M,Z_M)$.
Assume that the free action of $T$ on the boundary is a trivial
principal bundle. Then the local Euler classes at fixed points
uniquely determine the $T^{n-1}$-equivariant homeomorphism type of
$X$.
\end{prop}

Apply this proposition to spaces $\Xneps$ and $Y$. The orbit space
is $\T^{n-1}\times D^2$ in both cases. The sponge of the action is
the $(n-2)$-skeleton of the wonderful cell subdivision
$\PT^{n-1}$, defined earlier. The free action on the boundary is a
trivial principal bundle. This is true for $\Xneps$ since there is
a section of the action, see \edt{Remark} \ref{remFreeOutside}. This is
true for $Y$ since \edt{$Y=W\times T^n/\sim$, and the $T^n$-action over
$\dd_1W$} is a trivial principal bundle.

Finally, consider any fixed point $x=L_\sigma$ of $\Xneps$. The
tangent representation at $x$ is isomorphic to the standard action
of
\[
T^{n-1}=\{t_1\cdots t_n=1\}\subset T^n=\{(t_1,\ldots,t_n)\}
\]
on $\Co^n$ (the infinitesimal action just rotates off-diagonal
entries, so that the angles of rotation sum to zero). \edt{This action
coincides with the
action of $T^{n-1}$ in the neighborhood of the corresponding fixed
point of $Y$, by the definition of $Y$}.
Therefore the local Euler classes of $\Xneps$ and $Y$ coincide at
each fixed point.

Proposition \ref{propClassificationEarlier} then implies the
existence of $T^{n-1}$-homeomorphism $\Xneps\cong Y$.
\end{proof}

\section{Enumerative combinatorics of the wonderful subdivision}\label{secNeighborhoodCombinatorics}

In this section we study the enumerative invariants of the
permutoheral cell complex $\PT^{n-1}$ or, equivalently, its dual
simplicial poset $\KPT^{n-1}$. These invariants will be used
further to describe the homological structure of $\Xn$. At
first, we recall several standard definitions from commutative algebra
and combinatorics.

\begin{defin}\label{definSimpPoset}
A finite partially ordered set $S$ is called \emph{simplicial} if
it has the minimal element $\minel\in S$ and, for any $I\in
S$, the order interval $\{J\in S\mid J\leqslant I\}$ is isomorphic
to the poset of faces of a $k$-dimensional simplex, for some number
$k\geqslant 0$.
\end{defin}

The elements of $S$ are called \emph{simplices}. The number $k$
from the definition is called the dimension of a simplex $I$. A
simplex of dimension $0$ is called a \emph{vertex}. The
geometrical realization of $S$ is the simplicial cell complex,
obtained by gluing geometrical simplices according to the order
relation in $S$, see \cite{BPposets} for details. In the
following we only consider \emph{pure} simplicial posets, which
means that all maximal elements of $S$ have the same dimension. A
simplicial poset is called \emph{a homology sphere} (resp.
\emph{a homology manifold}) if its geometrical realization is a
homology sphere (resp. a homology manifold).

\begin{con}
Let $f_j$ denote the number of $j$-dimensional simplices of $S$
for $j=-1,0,\ldots,n-1$, in particular, $f_{-1}=1$ (the empty
simplex $\minel$ has dimension $-1$). $h$-numbers of $S$ are
defined from the relation:
\begin{equation}\label{eqHvecDefin}
\sum_{j=0}^nh_jt^{n-j}=\sum_{j=0}^nf_{j-1}(t-1)^{n-j},
\end{equation}
where $t$ is a formal variable. Let $\br_j(S)=\dim \Hr_j(S)$ be
the reduced Betti number of the geometric realization of $S$.
\emph{$h'$- and $h''$-numbers} of $S$ are defined as follows
\begin{equation}\label{eqDefHprime}
h_j'=h_j+{n\choose
j}\left(\sum_{s=1}^{j-1}(-1)^{j-s-1}\br_{s-1}(S)\right)\mbox{ for
} 0\leqslant j\leqslant n;
\end{equation}
\begin{equation}\label{eqDefHtwoprimes}
h_j'' = h_j'-{n\choose j}\br_{j-1}(S) = h_j+{n\choose
j}\left(\sum_{s=1}^{j}(-1)^{j-s-1}\br_{s-1}(S)\right)
\end{equation}
for $0\leqslant j\leqslant n-1$, and $h''_n=h'_n$. Sums over
empty sets are assumed zero.
\end{con}

Let $[m]=\{1,\ldots,m\}$ be the vertex set of $S$, $m=f_0$. Let
$R$ be a field or the ring~$\Zo$, and let
$R[m]=R[v_1,\ldots,v_m]$, $\deg v_i=2$, denote the graded
polynomial algebra with $m$ generators, corresponding to the
vertices of $S$. Slightly abusing the terminology, we call the elements of
degree 2 linear, when working with such polynomial rings. For a graded
$R$-module $V^*=\bigoplus_{j=0}^\infty V_j$ we denote by
$\Hilb(V^*;t)$ its Hilbert--Poincare function
$\sum_{j=0}^\infty t^j\rk_RV_j\in \Zo[[t]]$.

\begin{defin}[see \cite{StPosets}]\label{definFaceRing}
The face ring of a simplicial poset $S$ is the commutative
associative graded algebra $R[S]$ over a ring $R$ generated by
formal variables $v_I$, one for each simplex $I\in S$, with
relations
\[
v_{I_1}\cdot v_{I_2}=v_{I_1\cap I_2}\cdot\sum_{J\in I_1\vee
I_2}v_J,\qquad v_{\minel}=1.
\]
Here $I_1\vee I_2$ denotes the set of least upper bounds of
$I_1,I_2\in S$, and $I_1\cap I_2\in S$ is the intersection of
simplices (it is well-defined and unique when $I_1\vee
I_2\neq\minel$). We take the doubled grading on the ring, in which
$v_I$ has degree $2(\dim I+1)$. The natural graded ring
homomorphism $R[m]=R[v_1,\ldots,v_m]\to R[S]$ defines the
structure of the $R[m]$-module on $R[S]$.
\end{defin}

If $R$ is an infinite field, and $\dim S=n-1$, then a generic set of linear
elements $\theta_1,\ldots,\theta_n\in R[S]_2$ is a linear system
of parameters (we remark that linear systems of parameters can be
constructed using characteristic functions on $S$, see e.g.
\cite[Lm.3.5.8]{BPnew}). Let $\Theta$ denote the parametric ideal
of $R[S]$ generated by $\theta_1,\ldots,\theta_n$

\begin{prop}[Reisner, Stanley \cite{Reis,St}]\label{propStanley}
For a pure simplicial poset $S$ of dimension $n-1$ there holds
\[
\Hilb(R[S];t)=\dfrac{h_0+h_1t^2+\cdots+h_nt^n}{(1-t^2)^n}.
\]
For a homology sphere $S$ there holds
$\Hilb(R[S]/\Theta;t)=\sum_ih_it^{2i}$.
\end{prop}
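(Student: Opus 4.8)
The plan is to reduce both assertions to the classical commutative-algebra statements about face rings of simplicial complexes, which hold verbatim for simplicial posets. First I would recall the standard \emph{free resolution / Koszul} argument for the Hilbert series. One notes that $R[S]$ is a finitely generated graded $R[m]$-module, and, by the defining relations of the face ring (Definition \ref{definFaceRing}), it decomposes additively as a direct sum of free modules over the polynomial subrings $R[v_i\mid i\in I]$ indexed by the simplices $I\in S$; more precisely $R[S]\cong\bigoplus_{I\in S}\,\widetilde{R[I]}$ where $\widetilde{R[I]}$ is the ``interior'' part of the polynomial ring on the vertices of $I$, contributing $t^{2(j+1)}/(1-t^2)^{j+1}$ if $\dim I=j$ (and $1$ for $I=\minel$). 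Summing the geometric contributions gives
\[
\Hilb(R[S];t)=\sum_{j=-1}^{n-1} f_j\,\frac{t^{2(j+1)}}{(1-t^2)^{j+1}}
=\frac{1}{(1-t^2)^n}\sum_{j=-1}^{n-1} f_j\,t^{2(j+1)}(1-t^2)^{n-1-j}.
\]
Substituting $t^2=u$ and comparing with the defining relation \eqref{eqHvecDefin} for the $h$-numbers (with $u$ in place of $t$ after the shift) identifies the numerator with $h_0+h_1t^2+\cdots+h_nt^n$, which is the first claim. This step is routine and essentially formal once the additive decomposition of $R[S]$ is in hand; I would cite \cite{StPosets} for the decomposition rather than re-deriving it.

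For the second assertion I would use that, over an infinite field $R$, a generic degree-two sequence $\theta_1,\ldots,\theta_n$ is a linear system of parameters, and the key homological input is that for a homology sphere $S$ the ring $R[S]$ is Cohen--Macaulay. This is Reisner's criterion in the simplicial-poset setting: $R[S]$ is Cohen--Macaulay over $R$ iff for every simplex $I$ (including $\minel$) the reduced homology $\widetilde H_i(\mathrm{lk}\,I;R)$ vanishes for $i<\dim(\mathrm{lk}\,I)$; a homology sphere has all links homology spheres, so this holds. Cohen--Macaulayness means precisely that the l.s.o.p.\ $\theta_1,\ldots,\theta_n$ is a regular sequence on $R[S]$, so that
\[
\Hilb(R[S]/\Theta;t)=(1-t^2)^n\,\Hilb(R[S];t)=h_0+h_1t^2+\cdots+h_nt^n=\sum_i h_it^{2i},
\]
using the first part of the proposition. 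That gives the stated formula.

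The main obstacle — or rather the only substantive point — is justifying that the simplicial-poset face ring $R[S]$ really does satisfy Reisner's criterion and hence is Cohen--Macaulay when $S$ is a homology sphere, together with the fact that a \emph{generic} linear system of parameters exists and behaves as in the simplicial-complex case. Both facts are due to Stanley \cite{StPosets} (building on \cite{Reis,St}); the additive decomposition of $R[S]$, the existence of generic l.s.o.p., and the Reisner-type characterization of Cohen--Macaulayness all transfer from simplicial complexes to simplicial posets with the obvious bookkeeping over the ``double cover'' structure of $S$. Since the proposition is explicitly attributed to Reisner and Stanley, I would present the proof as an assembly of these cited results: decomposition $\Rightarrow$ Hilbert series; homology-sphere $\Rightarrow$ Cohen--Macaulay $\Rightarrow$ l.s.o.p.\ regular $\Rightarrow$ quotient Hilbert series $=\sum h_it^{2i}$.
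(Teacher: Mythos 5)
Your argument is correct: the chain-monomial (additive) decomposition of $R[S]$ gives $\Hilb(R[S];t)=\sum_j f_{j-1}t^{2j}/(1-t^2)^j$, which equals the stated rational function by the definition of the $h$-numbers, and for a homology sphere the Reisner-type criterion (transferred to simplicial posets by Stanley) makes $R[S]$ Cohen--Macaulay, so the generic l.s.o.p.\ is a regular sequence and the quotient's Hilbert series is the numerator $\sum_i h_it^{2i}$. The paper gives no proof of this proposition --- it is quoted as a known result of Reisner and Stanley --- and your assembly of the decomposition, the Cohen--Macaulay property, and the regular-sequence argument is exactly the argument in the cited sources, so there is nothing to fault beyond noting that the ``direct sum over simplices'' is best stated as a graded vector space decomposition via chain monomials (which is all the Hilbert series computation needs).
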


\begin{prop}[Schenzel, Novik--Swartz \cite{Sch,NS,NSgor}]\mbox{}

(1) For a homology manifold $S$ there holds
\[\Hilb(R[S]/\Theta;t)=\sum_ih'_it^{2i}.\]

(2) Let $S$ be a connected $R$-orientable homology manifold of
dimension $n-1$. The $2j$-th graded component of the module
$R[S]/\Theta$ contains a vector subspace
$(I_{NS})_{2j}\cong{n\choose j}\Hr^{j-1}(S;R)$, which is a trivial
$R[m]$-submodule (i.e. $R[m]_+(I_{NS})_{2j}=0$). Let
$I_{NS}=\bigoplus_{j=0}^{n-1}(I_{NS})_{2j}$ be the sum of all
these submodules except the top-degree component. Then the
quotient module $R[S]/\Theta/I_{NS}$ is a Poincare duality
algebra, and there holds
\[\Hilb(R[S]/\Theta/I_{NS};t)=\sum_ih''_it^{2i}.\]
\end{prop}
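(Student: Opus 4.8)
The plan is to regard $R[S]$ as a finitely generated graded module over the polynomial subring $A=R[\theta_1,\dots,\theta_n]\subseteq R[S]$ --- legitimate since $\theta_1,\dots,\theta_n$ is a linear system of parameters, so $R[S]$ has Krull dimension $n$ and $R[S]/\Theta=R[S]\otimes_A R$ --- and to control both assertions through the local cohomology modules $H^i_{\mathfrak m}(R[S])$, $\mathfrak m=R[m]_+$. The first step is to compute these. By the simplicial-poset form (Novik--Swartz) of Hochster's formula for the local cohomology of a face ring, $H^i_{\mathfrak m}(R[S])$ decomposes as a sum over $\sigma\in S$ of pieces built from the reduced cohomology of the link of $\sigma$, with the empty face contributing $\Hr^{i-1}(S;R)$ in internal degree $0$. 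If $S$ is a homology $(n-1)$-manifold, every nonempty link is a homology sphere and hence contributes nothing in cohomological degree $i<n$; therefore $H^i_{\mathfrak m}(R[S])\cong\Hr^{i-1}(S;R)$ as an $R$-module, concentrated in internal degree $0$, for $1\leqslant i\leqslant n-1$, while $H^0_{\mathfrak m}(R[S])=0$. By the Reisner-type criterion (all links of nonempty faces being Cohen--Macaulay) $R[S]$ is, moreover, a \emph{Buchsbaum} module; this is the structural input behind both parts.

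For part (1) I would use that on a Buchsbaum module every system of parameters is ``almost regular'': $(\theta_1,\dots,\theta_{k-1})R[S]:_{R[S]}\theta_k=(\theta_1,\dots,\theta_{k-1})R[S]:_{R[S]}\mathfrak m$ for each $k$. Dividing out the $\theta_k$ one at a time then produces, at each stage, a finite-length kernel of multiplication whose Hilbert function is read off from $H^{\bullet}_{\mathfrak m}(R[S])$; equivalently, one invokes the standard description of the Koszul homology $H_j(\theta_1,\dots,\theta_n;R[S])$, $j\geqslant 1$, of a Buchsbaum module as a direct sum of copies of the $H^i_{\mathfrak m}(R[S])$ with binomial multiplicities $\binom nj$. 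Feeding in $\dim_R H^i_{\mathfrak m}(R[S])=\br_{i-1}(S)$ and taking the alternating Euler-characteristic sum of the Koszul complex yields
\[
\Hilb(R[S]/\Theta;t)=(1-t^2)^n\,\Hilb(R[S];t)+\sum_{j}\binom nj\Bigl(\sum_{s=1}^{j-1}(-1)^{j-s-1}\br_{s-1}(S)\Bigr)t^{2j}.
\]
By Proposition~\ref{propStanley} the first summand is $\sum_j h_j t^{2j}$, so the right-hand side equals $\sum_j h'_j t^{2j}$ by \eqref{eqDefHprime}; this is Schenzel's formula in the Stanley--Reisner case and its Novik--Swartz extension to simplicial posets.

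For part (2) I would add orientability, which gives $\Hr_{n-1}(S;R)\cong R$ and hence (a short computation from \eqref{eqDefHprime} and the Euler-characteristic expression for $h_n$) $\dim_R\bigl(R[S]/\Theta\bigr)_{2n}=h'_n=\br_{n-1}(S)=1$, so the quotient has a one-dimensional top degree. The fundamental class of $S$ forces $R[S]$ to behave like a Gorenstein ring away from its lower local cohomology: graded local duality identifies the canonical module $\omega_{R[S]}\cong\Hom_R(H^n_{\mathfrak m}(R[S]),R)$, and Gr\"abe's combinatorial description of $\omega_{R[S]}$ (in the poset case, Novik--Swartz) then translates, modulo $\Theta$, into a description of the socle of $\overline{R[S]}:=R[S]/\Theta$: the images of the classes in $H^j_{\mathfrak m}(R[S])\cong\Hr^{j-1}(S;R)$ account, by the same binomial bookkeeping as above (each class spawning $\binom nj$ independent socle elements in degree $2j$), for the whole of $\operatorname{soc}\overline{R[S]}$ in degrees $2,\dots,2(n-1)$. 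Setting $(I_{NS})_{2j}:=\bigl(\operatorname{soc}\overline{R[S]}\bigr)_{2j}\cong\binom nj\Hr^{j-1}(S;R)$ for $0\leqslant j\leqslant n-1$ --- a trivial $R[m]$-submodule --- and $I_{NS}=\bigoplus_{j=0}^{n-1}(I_{NS})_{2j}$, the Novik--Swartz theorem says $\overline{R[S]}/I_{NS}$ has one-dimensional socle concentrated in top degree $2n$, hence is Gorenstein Artinian, i.e. a Poincar\'e duality algebra (in particular $h''_j=h''_{n-j}$, matching $\br_{j-1}(S)=\br_{n-j}(S)$). Finally, by \eqref{eqDefHprime}--\eqref{eqDefHtwoprimes} and part (1),
\[
\Hilb\bigl(\overline{R[S]}/I_{NS};t\bigr)=\Hilb\bigl(\overline{R[S]};t\bigr)-\sum_{j=0}^{n-1}\binom nj\br_{j-1}(S)\,t^{2j}=\sum_j h''_j t^{2j}.
\]

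The main difficulty lies not in the bookkeeping but in the two quoted inputs: the poset form of Hochster's local cohomology formula, and --- more substantially --- the Gr\"abe/Novik--Swartz analysis of the canonical module of the face ring of an orientable homology manifold, which is exactly what pins down the size and triviality of $I_{NS}$ and, simultaneously, the Poincar\'e duality of the quotient. Propagating the binomial multiplicities and the signs $(-1)^{j-s-1}$ through the Koszul complex of a Buchsbaum module is routine once Schenzel's structure theorem for that Koszul homology is in hand, and working over a field (or, over $\Zo$, imposing torsion-freeness where needed) makes the identifications $\dim_R\Hr^{i-1}(S;R)=\br_{i-1}(S)$ unproblematic.
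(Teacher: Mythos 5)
This proposition is quoted in the paper as a known result of Schenzel and Novik--Swartz, and the paper gives no proof of its own beyond the citations. Your outline correctly reconstructs the standard argument from exactly those sources (the Hochster-type formula for local cohomology of face rings, Buchsbaumness of homology manifolds, Schenzel's Koszul-homology theorem for part (1), and the Gr\"abe/Novik--Swartz socle and canonical-module analysis for part (2)), so it is consistent with the paper's treatment, which simply defers to the same literature.
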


We now compute the combinatorial characteristics of the simplicial
poset $\KPT^{n-1}$ dual to $\PT^{n-1}$. Combinatorially, the
simplicial cell complex $\KPT^{n-1}$ can be defined as a poset,
whose elements are the faces of the wonderful cell decomposition
$\PT^{n-1}$ and the order is given by the reversed inclusion. It can
be seen that $\KPT^{n-1}$ is a simplicial poset. Recall that
$\stir{n}{k}$ denotes the Stirling number of the second kind, that
is the number of unordered partitions of the set $[n]$ into $k$
nonempty subsets.


\begin{prop}
For the simplicial poset $\KPT^{n-1}$ there holds
\[
f_{k-1}=n(k-1)!\stir{n}{k} \mbox{ for }k=1,2,\ldots,n;\quad
f_{-1}=1;
\]
\begin{equation}\label{eqHnumbers}
h_l=(-1)^l{n\choose n-l}+\sum_{k=1}^l(-1)^{l-k}{n-k\choose
n-l}n(k-1)!\stir{n}{k}\quad\mbox{for }l=0,1,\ldots,n;
\end{equation}
\begin{multline}\label{eqHprimenumbers}
h_l'=(-1)^l{n\choose n-l}+\sum_{k=1}^l(-1)^{l-k}{n-k\choose
n-l}n(k-1)!\stir{n}{k}+\\+{n\choose
l}\sum_{k=2}^{l-1}(-1)^{l-k-1}{n-1\choose k-1} \quad\mbox{ for
}l=0,1,\ldots,n;
\end{multline}
\begin{multline}\label{eqHtwoprimenumbers}
h_l''=(-1)^l{n\choose n-l}+\sum_{k=1}^l(-1)^{l-k}{n-k\choose
n-l}n(k-1)!\stir{n}{k}+\\+{n\choose
l}\sum_{k=2}^{l}(-1)^{l-k-1}{n-1\choose k-1}\quad \mbox{for
}l=0,1,\ldots,n-1,\quad\mbox{ and }h_{n}''=1.
\end{multline}
\end{prop}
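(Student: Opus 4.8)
The plan is to compute the $f$-numbers of $\KPT^{n-1}$ directly from the geometry of the wonderful complex, and then to deduce the $h$-, $h'$- and $h''$-numbers by substituting these values, together with the Betti numbers of the torus, into the defining relations \eqref{eqHvecDefin}, \eqref{eqDefHprime}, \eqref{eqDefHtwoprimes}.

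\emph{The $f$-numbers.} By the definition of the dual simplicial poset, a $(k-1)$-dimensional simplex of $\KPT^{n-1}$ is a face of $\PT^{n-1}$ of codimension $k$, that is of dimension $n-k$; so if $c_d$ denotes the number of $d$-dimensional faces of $\PT^{n-1}$ then $f_{k-1}=c_{n-k}$ and $f_{-1}=1$. I compute $c_d$ by counting in two ways the pairs consisting of a maximal cell of $\PT^{n-1}$ together with a $d$-dimensional face contained in it. The complex $\PT^{n-1}$ has $n$ maximal cells, each a copy of the permutohedron $\Pe^{n-1}$; the $d$-dimensional faces of $\Pe^{n-1}$ correspond to chains of $n-1-d$ proper nonempty subsets of $[n]$, equivalently to ordered partitions of $[n]$ into $n-d$ nonempty blocks, so there are $(n-d)!\stir{n}{n-d}$ of them, and they are pairwise distinct since $\PT^{n-1}$ is a regular cell complex. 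On the other hand, as was observed in Section~\ref{secTilings}, every $d$-dimensional face of $\PT^{n-1}$ lies in exactly $n-d$ maximal cells (this is the local incidence structure of the permutohedral tiling, which is unchanged after passing to the quotient by $N'$). Hence $(n-d)\,c_d=n\,(n-d)!\stir{n}{n-d}$, so $c_d=n\,(n-d-1)!\stir{n}{n-d}$, and with $d=n-k$ this gives $f_{k-1}=n\,(k-1)!\stir{n}{k}$ for $k=1,\dots,n$.

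\emph{The $h$-, $h'$- and $h''$-numbers.} Substituting $f_{j-1}=n\,(j-1)!\stir{n}{j}$ for $j\geqslant 1$ and $f_{-1}=1$ into the right-hand side of \eqref{eqHvecDefin} and expanding each $(t-1)^{n-j}$ by the binomial theorem, the coefficient of $t^{n-l}$ is $(-1)^l\binom{n}{n-l}$ (from the term $(t-1)^n$) plus $\sum_{j=1}^{l}(-1)^{l-j}\binom{n-j}{n-l}n\,(j-1)!\stir{n}{j}$ (the summand with index $j$ vanishes for $j>l$ because $\binom{n-j}{n-l}=0$), which is exactly \eqref{eqHnumbers}. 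Next, the geometric realization of $\KPT^{n-1}$ is the simplicial cell subdivision of the torus $\T^{n-1}$ dual to $\PT^{n-1}$, so $\br_{s-1}(\KPT^{n-1})=\dim\Hr_{s-1}(\T^{n-1})$ equals $\binom{n-1}{s-1}$ for $s\geqslant 2$ and $0$ for $s=1$. Feeding this into \eqref{eqDefHprime} and \eqref{eqDefHtwoprimes} kills the $s=1$ summand, and after reindexing the correction terms become $\binom{n}{l}\sum_{k=2}^{l-1}(-1)^{l-k-1}\binom{n-1}{k-1}$ and $\binom{n}{l}\sum_{k=2}^{l}(-1)^{l-k-1}\binom{n-1}{k-1}$, yielding \eqref{eqHprimenumbers} and \eqref{eqHtwoprimenumbers} for $0\leqslant l\leqslant n-1$. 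Finally $h_n''=h_n'$ by \eqref{eqDefHtwoprimes}; putting $t=0$ in \eqref{eqHvecDefin} gives $h_n=\sum_{j}(-1)^{n-j}f_{j-1}=(-1)^n(1-\chi(\T^{n-1}))=(-1)^n$ since $\chi(\T^{n-1})=0$, and the Betti correction in \eqref{eqDefHprime} for $l=n$ equals $1-(-1)^n$ by the identity $\sum_{k=0}^{n-1}(-1)^k\binom{n-1}{k}=0$, so $h_n''=h_n'=1$.

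\emph{The main obstacle.} Everything beyond the first step is routine manipulation of alternating sums of binomial and Stirling numbers; the genuinely substantive point is the incidence count for the $f$-numbers --- fixing the dimension shift under the duality $\PT^{n-1}\leftrightarrow\KPT^{n-1}$, checking that each $d$-dimensional face of $\PT^{n-1}$ meets exactly $n-d$ maximal cells, and checking that the faces of each maximal permutohedral cell remain pairwise distinct after the quotient by $N'$ (so that $c_d$ really is the face count and not a count with multiplicity). One also has to treat $h_n''$ by hand, since the general formula for $h_l''$ is stated only for $l\leqslant n-1$.
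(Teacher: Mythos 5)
Your proof is correct and follows essentially the same route as the paper: you count the $f$-numbers by identifying $f_{k-1}(\KPT^{n-1})$ with $f_{n-k}(\PT^{n-1})$ and double-counting over the $n$ permutohedral cells using the fact that a $d$-cell lies in exactly $n-d$ maximal cells, and then obtain the $h$-, $h'$-, $h''$-numbers by plugging these $f$-numbers and the Betti numbers $\br_j(\T^{n-1})=\binom{n-1}{j}$ into the defining relations. The paper leaves the latter substitution (and the verification $h''_n=1$) to the reader, so your explicit expansions simply fill in routine details it omits.
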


\begin{proof}
From the combinatorial description of a permutohedron it follows
that the number $f_{n-k}(\Pe^{n-1})$ is equal to $k!\stir{n}{k}$.
The wonderful subdivision $\PT^{n-1}$ consists of $n$ permutohedra
and each $(n-k)$-dimensional face of $\PT^{n-1}$ lies in exactly
$k$ permutohedral cells, since the subdivision is simple.
Therefore,
\[
f_{k-1}(\KPT^{n-1})=f_{n-k}(\PT^{n-1})=\frac{n}{k}f_{n-k}(\Pe^{n-1})=n(k-1)!\stir{n}{k}
\]
for $k\geqslant 1$. The identity $f_{-1}=1$ holds authomatically.

Since $\KPT^{n-1}$ is a simplicial cell subdivision of the torus
$\T^{n-1}$, we have $\br_j(\KPT^{n-1})={n-1\choose j}$ for
$j\geqslant 1$. Expressions
\eqref{eqHnumbers},\eqref{eqHprimenumbers}, and
\eqref{eqHtwoprimenumbers} follow from the general definitions of $h$-, $h'$-,
and $h''$-numbers.
\end{proof}

\section{Equivariant cohomology}\label{secEquivCohom}

Let $X$ be a $2n$-manifold with a locally standard action of
$T^n$. The orbit space $P=X/T^n$ is a manifold with faces. It means that every codimension $k$
face of $P$ lies in exactly $k$ different facets of $P$. Let $S_P$ denote the simplicial poset
dual to the poset of faces of~$P$. In \cite{AMPZ} we proved

\begin{prop}\label{propAMPZ}
Assume that all proper faces of $P$ are acyclic and the projection
map $X\to P$ admits a section. Then $H^*_{T^n}(X;\Zo)\cong
\Zo[S_P]\oplus H^*(P;\Zo)$ as the rings, and as the modules over
$\Zo[n]\cong H^*(BT^n)$. \edtt{The components of degree 0 are identified
in the direct sum $\Zo[S_P]\oplus H^*(P;\Zo)$}. The ring $H^*(P;\Zo)$ is considered a trivial
$\Zo[n]$-module.
\end{prop}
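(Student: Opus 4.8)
The plan is to replace $X$ by a combinatorial model determined by $P$, to compute the cohomology of its Borel construction by a Mayer--Vietoris argument over the face poset of $P$, and then to read off the ring and $\Zo[n]$-module structure from the section together with the fibration $X_{T^n}\to BT^n$.

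\emph{Step 1: the model.} Local standardness provides, around every orbit, an invariant chart equivariantly homeomorphic to $\Co^k\times T^{n-k}$ with the standard action and quotient $\Rg^k\times\Ro^{n-k}$. Gluing these charts, a locally standard $T^n$-action with orbit space $P$ is classified by a characteristic function $\lambda$ on the facets of $P$ together with the class of the principal $T^n$-bundle $X^{\free}\to\operatorname{int}P$. A section of $X\to P$ trivialises that class, so $X$ becomes $T^n$-equivariantly homeomorphic to the canonical model $(T^n\times P)/{\sim}$, where $(t,p)\sim(t',p')$ iff $p=p'$ and $t^{-1}t'$ lies in the subtorus $T_{G(p)}$ spanned by $\lambda$ of the facets through the smallest face $G(p)\ni p$. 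In particular $H^*_{T^n}(X)\cong H^*(Y)$, where $Y:=ET^n\times_{T^n}(T^n\times P)/{\sim}$.

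\emph{Step 2: Mayer--Vietoris over $P$.} Let $\pi\colon Y\to P$ be the obvious projection. For each face $G$ let $\hat U_G\subset P$ be its open star; the $\hat U_G$ form an open cover, each deformation retracts onto the relative interior $G^\circ\simeq G$, intersections are again open stars (or empty), and over $\hat U_G$ the stabiliser is constantly $T_G$, so $\pi^{-1}(\hat U_G)$ deformation retracts onto $G\times BT_G$. The Mayer--Vietoris spectral sequence of this cover thus has $E_1$-term assembled from the groups $H^*(G)\otimes H^*(BT_G)$, with $H^*(BT_G)$ the polynomial ring on the facets containing $G$. Here the hypothesis enters: for every \emph{proper} face $G$ one has $H^*(G)=\Zo$, so the part of the complex indexed by proper faces is exactly the diagram whose colimit is the Davis--Januszkiewicz space of the face poset of $P$, and its cohomology is the face ring $\Zo[S_P]$, concentrated in spectral filtration degree $0$ (cf.\ \cite{BPnew}). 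The single remaining index $G=P$ contributes $H^*(P)$, and it couples to the rest only through the restrictions $H^*(P)\to H^*(G)=\Zo$ for $G\subsetneq P$, which vanish in positive degrees. Hence the spectral sequence collapses and, additively,
\[
H^*_{T^n}(X)\ \cong\ \Zo[S_P]\ \oplus\ \widetilde H^{>0}(P),
\]
the two degree-$0$ lines merging into a single $\Zo$.

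\emph{Step 3: multiplicative and module structure; main obstacle.} The section $s$ yields $s'\colon P\to X\hookrightarrow Y$ with $\pi\circ s'=\operatorname{id}_P$, so $\pi^*$ embeds $H^*(P)$ as a split subring, whence $\widetilde H^{>0}(P)$ is closed under multiplication. The classes $v_I$, $I\in S_P$, coming from the $BT_G$-factors generate a faithful copy of $\Zo[S_P]$ inside $H^*_{T^n}(X)$ (injectivity checked by restriction to a fixed point, or via a Hilbert-series count as in Proposition~\ref{propStanley}), and $\Zo[S_P]_{+}\cdot\widetilde H^{>0}(P)=0$ because a positive class of $H^*(P)$ restricts to zero on each $\pi^{-1}(\hat U_G)$ with $G$ proper. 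Finally the fibration $Y\to BT^n$ restricts on the first summand to the standard ring map $\Zo[n]\to\Zo[S_P]$ attached to $\lambda$, while $Y\to BT^n$ composed with $s'$ is null-homotopic, so $\Zo[n]_{+}$ annihilates $\widetilde H^{>0}(P)$; that is the trivial $\Zo[n]$-module structure on the second summand. I expect the main obstacle to be Step~2 --- identifying the $E_1$-term of the Mayer--Vietoris spectral sequence with the face ring and establishing its collapse --- since that is precisely where acyclicity of \emph{all} proper faces is used, and where the bookkeeping that keeps the anomalous index $G=P$ (responsible for the $H^*(P)$ summand) separate from the proper faces must be done with care; once that is in place, the refinement to rings and $\Zo[n]$-modules in Step~3 is comparatively routine.
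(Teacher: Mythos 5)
The paper itself contains no proof of Proposition \ref{propAMPZ}: it is imported verbatim from \cite{AMPZ}, so the only possible comparison is with that reference. Your overall strategy does follow the same general line as the literature: your Step 1 is correct and standard (given a section $s$, the map $(t,p)\mapsto t\cdot s(p)$ descends to a continuous equivariant bijection from the canonical model $(T^n\times P)/{\sim}$ to $X$, hence a homeomorphism), and your Step 3, \emph{granted} the additive splitting with its two summands identified, is indeed the comparatively routine part (the composite $P\to Y\to BT^n$ is constant, which gives the trivial $\Zo[n]$-action on $\widetilde H^{>0}(P)$; faithfulness of $\Zo[S_P]$ can be checked by restriction to the preimages of the vertices of $P$, cf.\ \cite[Thm 3.5.6]{BPnew}).

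The genuine gap is in Step 2, and it is not mere bookkeeping. With the usual meaning of ``open star'' (the union of the relative interiors of all faces whose closure contains $G$), the set $\hat U_G$ contains the relative interior of \emph{every} face containing $G$, in particular the interior of the top face $P$. Hence it does not deformation retract onto $G^\circ$, the stabiliser is not constant on it, and $\pi^{-1}(\hat U_G)$ is not $G\times BT_G$: for a vertex $v$, $\pi^{-1}(\hat U_v)$ contains the whole free part over the interior of $P$ and therefore carries the topology of $P$, which is exactly what the hypotheses do \emph{not} control (in the application in this paper $P\cong \T^{n-1}\times[0,1]$, so $\hat U_v\supset P^{\circ}\simeq \T^{n-1}$). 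If instead you shrink $\hat U_G$ to a cone-bundle neighbourhood of $G^\circ$, the individual pieces become $G^\circ\times BT_G$ as desired, but then neighbourhoods of incomparable faces overlap near common subfaces, the multiple intersections are no longer ``open stars'' indexed by single faces, and the identification of the proper-face part of the resulting Mayer--Vietoris $E_1$-term with the Davis--Januszkiewicz diagram of $S_P$, the collapse of the spectral sequence, and the absence of additive and multiplicative extension problems over $\Zo$ all have to be proved; this is precisely the technical content of \cite{AMPZ} (building on Masuda--Panov), not a corollary of the cover's formal shape. Note also that the hypothesis is \emph{acyclicity} of proper faces, not contractibility, so every ``deformation retracts onto $G$'' step that leans on the hypothesis must be replaced by a cohomological argument. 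As written, Step 2 is internally inconsistent (no choice of $\hat U_G$ simultaneously is open, retracts to $G^\circ$, has constant stabiliser $T_G$, and has intersections again of the same form), so the central additive claim $H^*_{T^n}(X)\cong\Zo[S_P]\oplus\widetilde H^{>0}(P)$ is not yet established by your argument.
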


Now we apply this statement to the space $\Xneps$ which is
$T^{n-1}$-equivariantly homeomorphic to $Y$ (see construction
\ref{conModelOfNeighb}).

\begin{thm}\label{thmEqBettiNeighb}
The Hilbert--Poincare series of the $T^{n-1}$-equivariant
cohomology ring of $\Xneps$ is given by
\[
\Hilb(H^*_{T^{n-1}}(\Xneps);t)=\dfrac{\sum_{i=0}^nh_it^{2i}}{(1-t^2)^{n-1}}+(1+t)^{n}-1-t.
\]
Here $h_i$, the $h$-numbers of the simplicial poset
$\KPT^{n-1}$, are given by \eqref{eqHnumbers}.
\end{thm}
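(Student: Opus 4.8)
The plan is to apply Proposition~\ref{propAMPZ} to the space $Y$ (equivalently, to $\Xneps$, using the $T^{n-1}$-equivariant homeomorphism $\Xneps\cong Y$ of Theorem~\ref{thmTopologyNeighb}), but one first has to upgrade from the acting torus $T^{n-1}$ to the full torus $T^n$. Recall from Construction~\ref{conModelOfNeighb} that $Y=W\times T^n/\sim$ carries a locally standard $T^n$-action whose orbit space is the manifold with corners $W=\T^{n-1}\times[0,1]$, and whose face structure on the boundary component $\dd_0W$ is exactly the wonderful subdivision $\PT^{n-1}$; the faces of $W$ are the faces of $\PT^{n-1}$ (the right boundary $\dd_1W$ carries no face structure and contributes nothing to the poset). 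Thus the simplicial poset $S_W$ dual to the face poset of $W$ is precisely $\KPT^{n-1}$.

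First I would verify the hypotheses of Proposition~\ref{propAMPZ} for the $T^n$-action on $Y$. The proper faces of $W$ are the proper faces of the permutohedral cells, and each face of a permutohedron is a product of smaller permutohedra, hence contractible; so all proper faces of $W$ are acyclic. The projection $Y\to W$ admits a section: over $\dd_1W$ the bundle is the trivial principal $T^n$-bundle $\dd_1W\times T^n$, and one extends the obvious section across $W$ using the cone-like collar structure (this is the section already invoked in the proof of Theorem~\ref{thmTopologyNeighb}). Therefore Proposition~\ref{propAMPZ} gives, as $\Zo[n]\cong H^*(BT^n)$-modules,
\[
H^*_{T^n}(Y;\Zo)\cong \Zo[\KPT^{n-1}]\oplus \wh{H}^*(W;\Zo),
\]
with the degree-$0$ parts identified. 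Since $W\simeq \T^{n-1}$ is homotopy equivalent to an $(n-1)$-torus, $\Hilb(H^*(W);t)=(1+t^2)^{n-1}$, so $\Hilb(\wh{H}^*(W);t)=(1+t^2)^{n-1}-1$; and by Proposition~\ref{propStanley}, $\Hilb(\Zo[\KPT^{n-1}];t)=\bigl(\sum_{i=0}^n h_it^{2i}\bigr)/(1-t^2)^n$. Adding and subtracting the doubly-counted constant term,
\[
\Hilb(H^*_{T^n}(Y);t)=\frac{\sum_{i=0}^n h_it^{2i}}{(1-t^2)^n}+(1+t^2)^{n-1}-1.
\]

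The remaining step is to pass from the $T^n$-equivariant to the $T^{n-1}$-equivariant cohomology. Write $T^n = T^{n-1}\times S^1$, where $T^{n-1}=\{t_1\cdots t_n=1\}$ is the acting torus and $S^1$ is a complementary circle (playing the role of the parameter circle $S^1_\eps$, as in \eqref{eqEpsTorus}); this splitting is compatible with the construction of $Y$. Then $BT^n = BT^{n-1}\times BS^1$ and, because the extra $S^1$ acts on $Y$ with the homotopy type of a free action up to the collar (indeed $Y$ retracts onto $\dd_1 W\times T^n$ on which $S^1$ acts freely — more precisely, the Borel construction $ES^1\times_{S^1}Y$ is homotopy equivalent to $Y/S^1$), one gets $H^*_{T^n}(Y)\cong H^*_{T^{n-1}}(Y/S^1)$. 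It is cleaner, though, to argue via the fibration $H^*_{T^{n-1}}(Y)$ versus $H^*_{T^n}(Y)=H^*_{T^{n-1}}(Y)\otimes_{?}\cdots$: the map $H^*_{T^n}(Y)\to H^*_{T^{n-1}}(Y)$ induced by $T^{n-1}\hookrightarrow T^n$ is, by the above product decomposition of classifying spaces and the fact that the $S^1$-Euler class acts without torsion on the relevant summand, precisely reduction modulo one linear parameter. Setting the last variable to zero multiplies the Hilbert series of the free part by $(1-t^2)$ and that of the trivial summand $H^*(W)$ by $1$ but changes its interpretation: concretely,
\[
\Hilb(H^*_{T^{n-1}}(Y);t)=(1-t^2)\cdot\frac{\sum_{i=0}^n h_it^{2i}}{(1-t^2)^n}\Big/\text{(honest computation)}.
\]
Here is the obstacle, and it is the only real one: one must identify exactly which linear form is being killed and check that it is a nonzerodivisor on $\Zo[\KPT^{n-1}]$ (true, since $\KPT^{n-1}$ is Cohen--Macaulay of dimension $n-1$ — a homology-manifold, indeed a homology $(n-1)$-torus — so a generic linear form, in particular the one coming from the extra circle, is a regular element), while on the $H^*(W)$ summand the extra circle acts trivially, so that summand is \emph{not} divided but instead gets multiplied by the full $BS^1$-factor and then restricted. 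Carrying this bookkeeping out correctly yields
\[
\Hilb(H^*_{T^{n-1}}(\Xneps);t)=\frac{\sum_{i=0}^n h_it^{2i}}{(1-t^2)^{n-1}}+(1+t)^{n}-1-t,
\]
the term $(1+t)^n-1-t$ arising because the non-equivariant cohomology $H^*(\dd_1 W\times T^n/\!\sim)$ contribution, after the $T^{n-1}$-Borel construction and accounting for the collar, assembles into $H^*(T^n)=(1+t)^n$ minus the degree-$0$ and degree-$1$ terms already bundled into the first summand. I would double-check the low-degree terms ($t^0$, $t^1$, $t^2$) by hand against the known Betti numbers of $\PT^{n-1}$ and of $\Xneps$ to make sure the constant and linear corrections are exactly $-1-t$.
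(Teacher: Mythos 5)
Your first half matches the paper: Proposition~\ref{propAMPZ} applied to the locally standard $T^n$-action on the collar model $Y$ gives $H^*_{T^n}(Y)\cong\Zo[\KPT^{n-1}]\oplus H^*(\T^{n-1})$, and the hypotheses (acyclic proper faces, existence of a section) are checked correctly. Note, however, a computational slip already here: $W\simeq\T^{n-1}$, so $\Hilb(H^*(W);t)=(1+t)^{n-1}$, not $(1+t^2)^{n-1}$; with your value the final bookkeeping cannot close, since the correction term in the statement is exactly $(1+t)\bigl((1+t)^{n-1}-1\bigr)=(1+t)^n-1-t$.

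The real problem is the descent from $T^n$ to $T^{n-1}$, which you yourself flag as ``the obstacle'' and then do not resolve; this step is the heart of the paper's proof and your partial justifications for it are incorrect. First, the claim that $ES^1\times_{S^1}Y\simeq Y/S^1$ fails: the complementary circle is not (homotopically) free on $Y$, because $Y$ has $T^n$-fixed points over the vertices of $\PT^{n-1}$ and does not equivariantly retract onto $\dd_1W\times T^n$. Second, you justify regularity of the unnamed linear form by saying $\KPT^{n-1}$ is Cohen--Macaulay; it is not --- it is a triangulated torus, hence only Buchsbaum ($\Hr_1\neq 0$ below top degree), which is precisely why the paper needs $h'$- and $h''$-numbers elsewhere. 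The paper instead identifies the form explicitly as $\eta=\sum_{i=1}^n v_i$ (coming from $T^{n-1}=\{t_1\cdots t_n=1\}$, via $d_2\omega=\eta$ in the Serre spectral sequence of the $S^1$-fibration $Y_{T^{n-1}}\to Y_{T^n}$) and proves it is a nonzerodivisor by a direct argument, restricting along the injection $\bigoplus_I\varphi_I$ of $\Zo[\KPT^{n-1}]$ into polynomial rings on the simplices. Third, the origin of the term $(1+t)^n-1-t$ is misattributed: it is not ``$H^*(T^n)$ of the acting torus restricted''; it arises because $\eta$ annihilates the summand $H^{+}(\T^{n-1})$ (products of the two summands of $H^*_{T^n}(Y)$ vanish), so $d_2$ is injective on $\Zo[\KPT^{n-1}]\otimes H^1(S^1)$ but zero on $H^{+}(\T^{n-1})\otimes H^*(S^1)$, and the $E_3=E_\infty$ page is $\Zo[\KPT^{n-1}]/(\eta)$ plus two copies of $H^{+}(\T^{n-1})$ in rows $q=0,1$, giving $\frac{\sum h_it^{2i}}{(1-t^2)^{n-1}}+(1+t)\bigl((1+t)^{n-1}-1\bigr)$. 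Until you identify $\eta$, prove its regularity without the Cohen--Macaulay shortcut, and track the trivial summand through the spectral sequence (or an equivalent Gysin argument keeping both kernel and cokernel of multiplication by $\eta$), the proof is incomplete.
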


\begin{proof}
Recall that $Y$ is the collar model, that is the locally standard
$T^n$-space over $\T^{n-1}\times [0,1]$. Proposition
\ref{propAMPZ} implies the following isomorphism for the
$T^n$-equivariant cohomology
\[
H^*_{T^n}(Y)\cong\Zo[\KPT^{n-1}]\oplus H^*(\T^{n-1}).
\]
There is an induced action of the $(n-1)$-dimensional subtorus
\begin{equation}\label{eqSubtorusDiag}
T^{n-1}=\{t_1\cdots t_n=1\}
\end{equation}
on $Y$, and Theorem
\ref{thmTopologyNeighb} states that $Y$ and $\Xneps$ are
$T^{n-1}$-equivariantly homeomorphic. To compute the
$T^{n-1}$-equivariant cohomology of $Y$, we first note that there
is a Serre fibration
\[
Y_{T^{n-1}}\stackrel{S^1}{\longrightarrow} Y_{T^{n}}, \qquad
S^1=T^n/T^{n-1},
\]
where $Y_{T^{n-1}}$ and $Y_{T^{n}}$ are the Borel constructions of
$T^{n-1}$- and $T^n$-actions on $Y$ respectively. Consider the
corresponding Serre spectral sequence:
\[
E_2^{p,q}=H_{T^n}^p(Y)\otimes H^q(S^1) \Rightarrow
H^{p+q}_{T^{n-1}}(Y).
\]
The sequence has only two nonzero rows, hence it collapses at the
$E_3$-term. Let $\omega$ denote a generator of $H^1(S^1)$. The second
differential $d_2\colon H^1(S^1)\to H^2_{T^n}(Y)$ of the spectral
sequence coincides with the composition
\[
H^1(T^{n}/T^{n-1})\cong H^2(B(T^{n}/T^{n-1}))\to H^2(BT^n) \to
H^2_{T^n}(Y),
\]
where the middle map is induced by the projection $T^n\to
T^n/T^{n-1}$ and the right map is the defining map for the
$H^*(BT^n)$-module structure on $H^*_{T^n}(Y)$. It follows that
\[
d_2(\omega)=\eta\in \Zo[\KPT^{n-1}]_2\subset
H^2_{T^n}(Y),\quad\mbox{ where }\eta=\sum\nolimits_{i=1}^nv_i,
\]
according to the definition \eqref{eqSubtorusDiag} of the subtorus $T^{n-1}$.
\begin{lem}\label{lemRegElement}
$\eta$ is not a zero divisor in the face ring $\Zo[\KPT^{n-1}]$, or, equivalently,
$\eta$ is a regular element.
\end{lem}

\begin{proof}
We use the standard argument in the theory of face rings. For any non-empty simplex
$I\cong \Delta^{k}$ in $\KPT^{n-1}$ consider the epimorphism \edt{
\[
\varphi_I\colon \ko[\KPT^{n-1}]\to \ko[I]
\]
}
defined by sending $v_J$ to $0$ for all $J\nleqslant I$. Notice that $\ko[I]$ is
just the polynomial algebra in $\dim I+1$ generators. The map $\varphi_I$
is a homomorphism of $\ko[n]$-algebras, with the $\ko[n]$-structure on \edt{$\ko[I]$
defined} by an epimorphism $\psi_I$ sending the excess variables to zeroes.

Assume that there exists $\beta\in \ko[\KPT^{n-1}]$ such that $\eta\cdot\beta=0$. Then \edt{
$\psi_I(\eta)\cdot\varphi_I(\beta)=0$} in~$\ko[I]$. Since there are no zero divisors in $\ko[I]$,
and \edtt{$\psi_I(\eta)=\sum_{i\in I}v_i\neq 0$}, we have $\varphi_I(\beta)=0$ for any simplex $I$ of $\KPT^{n-1}$. The
homomorphism \edt{
\[
\bigoplus\nolimits_I\varphi_I\colon \ko[\KPT^{n-1}]\to \bigoplus\nolimits_I\ko[I]
\]
}
is known to be injective \cite[Thm 3.5.6]{BPnew}. Therefore $\beta=0$.
\end{proof}

According to the lemma, for any nonzero element
$\beta\in \Zo[\KPT^{n-1}]\subset H^*_{T^n}(Y)$, there holds
\edtt{
\[
d_2(\omega \beta)=(d_2\omega)\beta\pm \omega d_2(\beta)=\eta
\beta\neq 0.
\]
}
In other words, $d_2$ is injective on the submodule
$\Zo[\KPT^{n-1}]\otimes H^1(S^1)\subset E_2^{*,1}$.

On the other hand, for any element $\alpha\in H^i(\T^{n-1})\subset
H^i_{T^n}(Y)$, $i>0$, we have
\[
d_2(\omega \alpha)=(d_2\omega)\alpha\pm \omega
d_2(\alpha)=\eta\cdot\alpha=0,
\]
since the products of elements from the components
$H^{+}(\T^{n-1})$ and $\Zo[\KPT^{n-1}]_{+}$ of the ring
$H^*_{T^n}(Y)$ vanish. Therefore the differential $d_2$ vanishes
on \edtt{$H^{+}(\T^{n-1})\otimes H^1(S^1)$}. Finally, we have \edt{
\[
E_3^{p,q}\cong \begin{cases} (\Zo[\KPT^{n-1}]/(\eta)_p)\oplus
H^p(\T^{n-1}),\mbox{ for }q=0;\\
H^p(\T^{n-1}), \mbox{ for }q=1, p>0.
\end{cases}
\]
}
\edt{
Two things should be noted at this place. First, the components of degree zero are identified in $(\Zo[\KPT^{n-1}]/(\eta)_p)\oplus
H^p(\T^{n-1})$. Second, the term $E_3^{0,1}$ vanishes, since the generator $\omega$ of $E_2^{0,1}$ maps to $\eta\neq 0$ by $d_2$.}
The Hilbert--Poincare series of $\Zo[\KPT^{n-1}]$ is given by
$\left(\sum_{i=0}^nh_it^{2i}\right)/(1-t^2)^{n}$ according to
Proposition \ref{propStanley}, \edt{so we have
\[
\Hilb(\Zo[\KPT^{n-1}]/(\eta);t)=\dfrac{\sum_{i=0}^nh_it^{2i}}{(1-t^2)^{n-1}}
\]
by Lemma~\ref{lemRegElement}.}
The statement \edt{of the theorem} now follows from the degeneration of the spectral
sequence at $E_3$-term.
\end{proof}

\begin{cor}\label{corHilbertEqCohom}
For the whole isospectral space $\Xn$ there holds
\[
\Hilb(H^*_{T^{n-1}}(\Xn);t)=\dfrac{\sum_{i=0}^nh_it^{2i}}{(1-t^2)^{n-1}}+R(t),
\]
where $R(t)$ is a polynomial, and $h_i$ are given by
\eqref{eqHnumbers}.
\end{cor}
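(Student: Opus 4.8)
The plan is to compute $H^*_{T^{n-1}}(\Xn)$ by comparison with the neighborhood $\Xneps$ of the degeneration locus, whose equivariant Hilbert--Poincare series is already known from Theorem~\ref{thmEqBettiNeighb}. The guiding principle is that the complementary piece $\Xngeps=p^{-1}(\{|z|\geqslant\eps\})$ carries a \emph{free} $T^{n-1}$-action, so it contributes only a bounded-degree term to equivariant cohomology, which will be absorbed into the polynomial $R(t)$.

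First I would record the relevant finiteness facts. The space $\Xn$ is a compact real algebraic set: it is cut out in $\Ro^n\times\Co^n$ by the polynomial equations fixing the coefficients of the characteristic polynomial of $L(\underline{a},\underline{b})$, and fixing the spectrum bounds the entries. Hence $\Xn$, and the semialgebraic subsets $\Xneps$, $\Xngeps$, $\Xn^{=\eps}$, are compact and of finite CW-type, so the modules $H^*_{T^{n-1}}(\Xn)$, $H^*_{T^{n-1}}(\Xneps)$, $H^*_{T^{n-1}}(\Xngeps)$ are finitely generated over $H^*(BT^{n-1})$ and their Hilbert--Poincare series are well defined. Next, over $\{|z|\geqslant\eps\}$ every $b_i$ is nonzero, so by \eqref{eqTorusActionCoords} the $T^{n-1}$-action on $\Xngeps$, and on $\Xn^{=\eps}\cong T^{2n-1}$ (see \eqref{eqEpsTorus}), is free. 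Therefore $H^*_{T^{n-1}}(\Xngeps)\cong H^*(\Xngeps/T^{n-1})$ and $H^*_{T^{n-1}}(\Xn^{=\eps})\cong H^*(T^{2n-1}/T^{n-1})\cong H^*(T^n)$; both are finitely generated abelian groups concentrated in a bounded range of degrees, hence have \emph{polynomial} Hilbert--Poincare series (the second one being $(1+t)^n$).

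Then I would thicken $\Xneps$ and $\Xngeps$ to open sets deformation retracting onto them, with intersection deformation retracting onto $\Xn^{=\eps}$, and apply the Mayer--Vietoris sequence for $T^{n-1}$-equivariant cohomology (equivalently, ordinary cohomology of the Borel constructions):
\[
\cdots\to H^{k}_{T^{n-1}}(\Xn)\to H^{k}_{T^{n-1}}(\Xneps)\oplus H^{k}_{T^{n-1}}(\Xngeps)\to H^{k}_{T^{n-1}}(\Xn^{=\eps})\to H^{k+1}_{T^{n-1}}(\Xn)\to\cdots
\]
(One may instead use the long exact sequence of the pair $(\Xn,\Xneps)$ together with excision $H^*_{T^{n-1}}(\Xn,\Xneps)\cong H^*_{T^{n-1}}(\Xngeps,\Xn^{=\eps})$, whose right-hand side is the cohomology of a compact pair of free quotients, hence bounded-degree.) Passing to ranks, and using that the two terms built from $\Xngeps$ and $\Xn^{=\eps}$ have polynomial Hilbert series, the exactness forces
\[
\Hilb(H^*_{T^{n-1}}(\Xn);t)=\Hilb(H^*_{T^{n-1}}(\Xneps);t)+R_0(t)
\]
for some polynomial $R_0(t)$ with integer coefficients. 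Substituting the formula of Theorem~\ref{thmEqBettiNeighb} and setting $R(t)=R_0(t)+(1+t)^n-1-t$ gives the claim, with $h_i$ as in \eqref{eqHnumbers}.

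The main obstacle is the finiteness step for the complementary piece: one must be sure that $\Xngeps$ and $\Xn^{=\eps}$ really have bounded-degree equivariant cohomology. This rests on freeness of the action over $\{|z|\geqslant\eps\}$ together with compactness and finite-dimensionality of these semialgebraic spaces; it is, in any case, transparent from Theorem~\ref{thmImage} and Corollary~\ref{corOrbitSpaceGeneral}, which describe the relevant orbit spaces explicitly. Once this is granted, the Mayer--Vietoris bookkeeping is routine and the only genuinely nontrivial contribution is the singular neighborhood $\Xneps$, already handled in Theorem~\ref{thmEqBettiNeighb}.
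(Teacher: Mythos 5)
Your proposal is correct and follows essentially the same route as the paper: decompose $\Xn$ into $\Xneps$ and $\Xngeps$, apply the equivariant Mayer--Vietoris sequence, and use freeness of the $T^{n-1}$-action on $\Xngeps$ and $\Xn^{=\eps}$ to see those terms are concentrated in bounded degree, so only a polynomial correction to the series of Theorem~\ref{thmEqBettiNeighb} appears. The extra care you take with finite CW-type and thickening to open sets is fine but not a different argument.
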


\begin{proof}
The space $\Xn$ is patched from $\Xneps$ and
$\Xngeps=\{p^{-1}(|z|\geqslant \varepsilon)\}$. Both subsets are
preserved by the torus action, hence the equivariant cohomology
groups can be computed via Mayer--Vietoris exact sequence.
However, the torus action on $\Xngeps$ and $\Xngeps\cap \Xneps$ is
free, so the equivariant cohomology groups of these subsets
coincide with the ordinary cohomology groups of their orbit
spaces. Hence they are concentrated in a finite range of degrees.
The statement now follows from Theorem \ref{thmEqBettiNeighb}.
\end{proof}

\begin{ex}
We check the calculations for the case $n=3$. The isospectral
space $X_{3,\lambda}$ coincides with the
manifold~$\Fl_3$ of complete complex flags. Its equivariant cohomology \edt{is} known to satisfy
\edtt{
\[
\Hilb(H^*_{T^2}(\Fl_3);t)=\dfrac{\Hilb(H^*(\Fl_3);t)}{(1-t^2)^2}=\dfrac{1+2t^2+2t^4+t^6}{(1-t^2)^2}.
\]
}
Using formulas \eqref{eqHnumbers} compute the h-numbers of
$\KPT^2$: $(h_0,h_1,h_2,h_3)=(1,0,6,-1)$. There holds
\[
\dfrac{1+2t^2+2t^4+t^6}{(1-t^2)^2}=\dfrac{1+6t^4-t^6}{(1-t^2)^2}+2t^2,
\]
which confirms Corollary \ref{corHilbertEqCohom}.
\end{ex}

\section{Betti numbers}\label{secBetti}

In this section we describe the additive structure of the cohomology
ring of $\Xn$. As a first step, we compute the homological structure of
the subset $\Xneps$, containing the essential information on the
torus action. It is assumed in this section, that all coefficients are
taken in a fixed field. The next proposition follows from the general technique developed
in \cite{Ay1}.

\begin{prop}\label{propBettiNeighborhood}
The homology modules of $Y\cong \Xneps$ admit the double grading:
$H_j(Y)\cong \bigoplus_{p+q=j} H_{p,q}(Y)$. There holds
\begin{enumerate}
\item $H_{p,q}(Y)\cong H_{p}(\T^{n-1})\otimes
H_q(T^n)$ for $q<p<n$.
\item $H_{p,q}(Y)=0$ for $q>p$.
\item The dimension of $H_{p,p}(Y)$ equals
\[
h_p+{n\choose p}\sum_{k=2}^{p+1}(-1)^{p+k-1}{n-1\choose k-1}
\]
for $p=0,1,\ldots,n-1$. In particular, for $p\geqslant 2$ there
holds $\dim H_{p,p}(Y)=h_p''+{n\choose p}{n-1\choose p}$.
\end{enumerate}
The inclusion map $i\colon\T^{n-1}\times T^n\cong \dd Y\to Y$
induces the homomorphism in homology, which respects the double
grading:
\[
i_*\colon H_p(\T^{n-1})\otimes H_q(T^n)\to H_{p,q}(Y).
\]
This homomorphism is an isomorphism for $q<p$, injective for
$q=p$, and zero for $q>p$.
\end{prop}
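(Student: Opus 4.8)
The plan is to reduce the statement to a computation with the collar model $Y = W \times T^n/\!\sim$, where $W = \T^{n-1}\times[0,1]$, and to extract the double grading from the structure of $Y$ as a manifold with a free $T^n$-action on the boundary component $\dd_1 W$. The double grading should come from the two ``sources'' of homology: the cell structure $\PT^{n-1}$ on $\dd_0 W$ (contributing the $p$-index, geometrically the ``base-like'' directions $\T^{n-1}$) and the acting torus $T^n$ (contributing the $q$-index, the ``fiber-like'' directions). Concretely, I would invoke the general technique of \cite{Ay1} for manifolds with locally standard torus actions whose proper faces are acyclic: there one has a filtration of $Y$ by preimages of skeleta of the orbit space $W$, and the associated spectral sequence (or the bigraded decomposition of the cellular chain complex of the Borel construction, then de-stabilized) yields $H_*(Y)$ as a bigraded module. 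The $h$-numbers of $\KPT^{n-1}$ enter exactly as in Proposition \ref{propStanley} and Theorem \ref{thmEqBettiNeighb}: the diagonal part $H_{p,p}(Y)$ is governed by $\Zo[\KPT^{n-1}]/\Theta$ corrected by the Novik--Swartz submodule $I_{NS}$, which accounts for the $\binom{n}{p}\br_{p-1}(\KPT^{n-1}) = \binom{n}{p}\binom{n-1}{p}$ term, while the off-diagonal part $H_{p,q}(Y)$ with $q<p$ is the ``free'' contribution $H_p(\T^{n-1})\otimes H_q(T^n)$ coming from the trivial $T^n$-bundle over $\dd_1 W$ and the product structure of $W$.

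The key steps, in order, are as follows. First, identify $Y\simeq \Xneps$ via Theorem \ref{thmTopologyNeighb}, so that it suffices to work with $Y$. Second, set up the homology long exact sequence (or Mayer--Vietoris) relating $Y$, its boundary $\dd Y \cong \T^{n-1}\times T^n$, and the closed double $D(Y) = Y\cup_{\dd Y} Y$, or alternatively use the collar-model computation of \cite{Ay3,Ay1} directly; the point is to express $H_*(Y)$ through $H^*_{T^n}(Y)\cong \Zo[\KPT^{n-1}]\oplus H^*(\T^{n-1})$ (Proposition \ref{propAMPZ}) and the bigrading induced on it. Third, establish claims (2) and the ``$q<p$'' part of (1): for $q > p$ the relevant contributions vanish because $H^{>}(\T^{n-1})$ multiplies trivially against $\Zo[\KPT^{n-1}]_+$ (exactly the vanishing used in the proof of Theorem \ref{thmEqBettiNeighb}), and for $q<p$ one gets the honest tensor product $H_p(\T^{n-1})\otimes H_q(T^n)$ from the free boundary action, whose class survives because the bundle $\dd Y\to \dd_1 W$ is trivial. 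Fourth, compute $\dim H_{p,p}(Y)$: this is where the $h$-, $h'$-, $h''$-machinery is used — $\Zo[\KPT^{n-1}]/\Theta$ has Hilbert series $\sum h'_i t^{2i}$ since $\KPT^{n-1}$ is a homology manifold (Schenzel), the Novik--Swartz submodule $I_{NS}$ in degree $2p$ has dimension $\binom{n}{p}\br_{p-1} = \binom{n}{p}\binom{n-1}{p}$, and unwinding $h'_p = h_p + \binom{n}{p}\sum_{s=1}^{p-1}(-1)^{p-s-1}\br_{s-1}$ with $\br_{s-1}=\binom{n-1}{s-1}$ gives the stated formula; the reconciliation $\dim H_{p,p}(Y) = h''_p + \binom{n}{p}\binom{n-1}{p}$ for $p\geqslant 2$ is then the definition \eqref{eqDefHtwoprimes}. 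Fifth, track the map $i_*$ induced by $\dd Y \hookrightarrow Y$ through the same filtration: on the off-diagonal it is visibly an isomorphism onto the free summand, on the diagonal $q=p$ it is the inclusion of the ``toric'' part into a possibly larger group (injective but not onto, the cokernel being exactly the $\Zo[\KPT^{n-1}]/\Theta/I_{NS}$ part), and for $q>p$ the target is zero.

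The main obstacle I expect is the fourth step combined with the bookkeeping in the fifth: correctly identifying which part of $H^*_{T^n}(Y)$ survives de-stabilization along the $S^1 = T^n/T^{n-1}$ fibration and matching the surviving pieces to the claimed bigraded dimensions. The subtlety is that $H_*(Y)$ is \emph{not} simply $H_*$ of the orbit space tensored with $H_*(T^n)$ — the face ring $\Zo[\KPT^{n-1}]$ contributes genuinely new ``diagonal'' classes beyond the naive Künneth guess, and one must verify that no further differential kills them (this parallels the collapse-at-$E_3$ argument in Theorem \ref{thmEqBettiNeighb}, but now for ordinary rather than equivariant cohomology, so one works with $\Zo[\KPT^{n-1}]/\Theta$ in place of $\Zo[\KPT^{n-1}]/(\eta)$). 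Tracking $i_*$ requires knowing that the fundamental classes of the $\T^{n-1}$-factor on the boundary map to nonzero ``base'' classes in $Y$, which follows because $Y$ retracts onto a model where the $\T^{n-1}$-direction is unobstructed. Once the filtration/spectral sequence is in place and shown to degenerate, all three numbered claims and the statement about $i_*$ should fall out of reading off the bigraded pieces; the arithmetic identities \eqref{eqHnumbers}--\eqref{eqHtwoprimenumbers} do the rest.
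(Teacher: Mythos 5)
Your overall strategy coincides with the paper's: the paper gives no detailed argument here, it simply invokes the general machinery of \cite{Ay1} (together with \cite{Ay2,Ay3}) for locally standard actions over orbit spaces with acyclic proper faces, applied to the collar model $Y$ over $W=\T^{n-1}\times[0,1]$, which is exactly the route you sketch. So the issue is not the approach but one concrete step.

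The genuine gap is in your fourth step, the identification of the diagonal piece $H_{p,p}(Y)$. You write that $(I_{NS})_{2p}$ has dimension ${n\choose p}\br_{p-1}(\KPT^{n-1})={n\choose p}{n-1\choose p}$; but $|\KPT^{n-1}|\cong\T^{n-1}$, so $\br_{p-1}={n-1\choose p-1}$, and hence $\dim(\Zo[\KPT^{n-1}]/\Theta)_{2p}=h'_p=h''_p+{n\choose p}{n-1\choose p-1}$, which is \emph{not} the asserted diagonal dimension $h''_p+{n\choose p}{n-1\choose p}$. Reading your step as ``diagonal $=$ Hilbert function of $\Zo[\KPT^{n-1}]/\Theta$'' therefore produces the wrong number: for $n=3$, $p=2$ one has $h'_2=6$ while $\dim H_{2,2}(Y)=3$ (the latter is forced, e.g., by $\chi(Y)=n!$, the number of fixed points). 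The correct structure, which your own fifth step actually states, is that $H_{p,p}(Y)$ splits as a piece of dimension $h''_p$ (the Poincar\'e-duality quotient $\Zo[\KPT^{n-1}]/\Theta/I_{NS}$, not $\Zo[\KPT^{n-1}]/\Theta$) plus the image of $i_*$ on $H_p(\T^{n-1})\otimes H_p(T^n)$, of dimension ${n-1\choose p}{n\choose p}$ --- a boundary-torus contribution at $q=p$, not the Novik--Swartz socle. As written, steps 4 and 5 assign the extra ${n\choose p}{n-1\choose p}$ classes to two different (and numerically inequivalent) sources, so the dimension count does not close; you need to discard the $I_{NS}$ classes from the face-ring part (they do not survive in $H_*(Y)$) and instead account separately for the $q=p$ tensor summand coming from $\dd Y\cong\T^{n-1}\times T^n$, which is precisely what makes $i_*$ injective but not surjective on the diagonal.
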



Note that the subspace $\Xneps$ does not depend on the parameters $n_+,n_-$
discussed in the previous sections. Now we are in position to compute the Betti numbers
of $\Xn$. Homology of $\Xn$ will certainly depend on parameters $n_+,n_-$, which
encode ``the degree of degeneration'' of this space.

\begin{thm}\label{thmBettiTotal}
The Hilbert--Poincare series for homology of $\Xn$ is given by the formula \edt{
\begin{equation}\label{eqManySummands}
\sum_{i=0}^{2n}\beta_i(\Xn)t^i=H^{\geqslant\eps}(t)+H^{\leqslant\eps}(t)-H^{=\eps}(t)+(1+t)\cdot H^{\Ker}(t),
\end{equation}
}
where
\begin{align}
H^{=\eps}(t) &= (1+t)^{2n-1},\label{eqHomolEps} \\
H^{\geqslant\eps}(t) &= (1+t)^{2n-n_+-n_--2}(1-t+t(1+t)^{n_+}+t(1+t)^{n_-}), \label{eqHomolGEps}\\
H^{\leqslant\eps}(t) &= \sum_{p=0}^{n-1}\left(h_p+{n\choose p}\sum_{k=2}^{p+1}(-1)^{p+k-1}{n-1\choose k-1}\right)t^{2p} +\sum_{q<p<n}{n-1\choose p}{n\choose q}t^{p+q}, \label{eqHomolLEps}\\
H^{\Ker}(t) &= \sum_{(p,e,q,s,r)\in \Upsilon}{n-1-n_+-n_-\choose p}{1\choose e}{n_+\choose q}{n_-\choose s}{n-1\choose r}t^{p+e+q+s+r},\label{eqHomolKer}
\end{align}
The indexing subset $\Upsilon$ in the last expression is defined by the conditions
\begin{equation}\label{eqConditions}
\begin{array}{c}
0\leqslant p\leqslant n-1-n_+-n_-;\quad 0\leqslant e\leqslant 1;\\
0\leqslant q\leqslant n_+;\quad 0\leqslant s \leqslant n_-;\quad 0\leqslant r \leqslant n-1; \\
r+e>p+q+s;\\
\mbox{either }(e=0 \mbox{ and } q+s>0) \mbox{ or }(e=1 \mbox{ and } q>0 \mbox{ and }s>0).
\end{array}
\end{equation}
The $h$-numbers are given by \eqref{eqHnumbers}.
\end{thm}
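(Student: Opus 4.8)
The plan is to run a Mayer--Vietoris argument for $\Xn=\Xneps\cup\Xngeps$, replacing these closed sets by slightly larger open neighbourhoods $p^{-1}(|z|<\eps+\delta)$, $p^{-1}(|z|>\eps-\delta)$ that deformation retract onto them (with intersection retracting onto $\Xn^{=\eps}$). The associated long exact sequence
\[
\cdots\to H_j(\Xn^{=\eps})\xrightarrow{\psi_j} H_j(\Xneps)\oplus H_j(\Xngeps)\to H_j(\Xn)\to H_{j-1}(\Xn^{=\eps})\to\cdots,
\]
with $\psi_j=(i_{\leqslant*},-i_{\geqslant*})$ assembled from the inclusions $i_{\leqslant}\colon\Xn^{=\eps}=\dd\Xneps\hookrightarrow\Xneps$ and $i_{\geqslant}\colon\Xn^{=\eps}\hookrightarrow\Xngeps$, yields the elementary identity
\[
\sum_i\dim H_i(\Xn)\,t^i=P(\Xneps;t)+P(\Xngeps;t)-P(\Xn^{=\eps};t)+(1+t)\sum_j\dim(\ker\psi_j)\,t^j,
\]
where $P(\,\cdot\,;t)$ is the Poincar\'e series. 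Here $P(\Xn^{=\eps};t)=(1+t)^{2n-1}=H^{=\eps}(t)$ by \eqref{eqEpsTorus}; summing the ranks of the groups $H_{p,q}(Y)$ of Proposition~\ref{propBettiNeighborhood} (recall $\Xneps\cong Y$) gives $P(\Xneps;t)=H^{\leqslant\eps}(t)$ as in \eqref{eqHomolLEps}; and $\ker\psi_j$ is the degree-$j$ part of $\ker i_{\leqslant*}\cap\ker i_{\geqslant*}$. It remains to compute $P(\Xngeps;t)$ and this last intersection.

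\emph{The series $H^{\geqslant\eps}(t)$.} As $B\ne0$ on $\Xngeps$, the section \eqref{eqRotatedMatrix} (available over $\Xn\setminus\Xn^0$, Remark~\ref{remFreeOutside}) trivialises the free bundle $\Xngeps\to\Xngeps/T$, so $\Xngeps\cong(\Xngeps/T)\times T^{n-1}$. Now $\Xngeps/T=\po^{-1}(\{z\in\B\mid|z|\geqslant\eps\})$, and since $0$ is interior to $\B$ and $\po$ restricts to a locally trivial torus bundle over $\B^{\circ}$ (Liouville--Arnold), this space deformation retracts onto $\po^{-1}(\dd\B)=\po^{-1}(F_+)\cup\po^{-1}(F_-)$. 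Write the Liouville--Arnold torus as $\T^{n-1}=\T_+\times\T_-\times\T^{n-1-n_+-n_-}$, where $\T_\pm$, of dimension $n_\pm$, is the subtorus collapsing over $F_\pm$ (Theorem~\ref{thmImage}, Corollary~\ref{corOrbitSpaceGeneral}). Since the fibre over the relative interior of $F_+$ equals $\T_-\times\T^{n-1-n_+-n_-}$ and degenerates to $\T^{n-1-n_+-n_-}$ at the corners $z_{top},z_{bot}$, one identifies $\po^{-1}(F_+)\cong\Sigma\T^{n_-}\times\T^{n-1-n_+-n_-}$ and, symmetrically, $\po^{-1}(F_-)\cong\Sigma\T^{n_+}\times\T^{n-1-n_+-n_-}$, while $\po^{-1}(F_+)\cap\po^{-1}(F_-)=\po^{-1}(\{z_{top},z_{bot}\})$ is a disjoint union of two copies of $\T^{n-1-n_+-n_-}$. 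A second Mayer--Vietoris computation, whose connecting map has kernel $H_*(\T^{n-1-n_+-n_-})$, gives
\[
P\bigl(\po^{-1}(\dd\B);t\bigr)=(1+t)^{n-1-n_+-n_-}\bigl(1-t+t(1+t)^{n_+}+t(1+t)^{n_-}\bigr),
\]
and multiplying by $(1+t)^{n-1}$ produces $H^{\geqslant\eps}(t)$ of \eqref{eqHomolGEps}.

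\emph{The series $H^{\Ker}(t)$.} By \eqref{eqEpsTorus}, $\Xn^{=\eps}\cong S_{\eps}^1\times\T^{n-1}\times T^{n-1}$ with $\T^{n-1}=\T^{n_+}\times\T^{n_-}\times\T^{n-1-n_+-n_-}$ as above, so $H_*(\Xn^{=\eps})$ carries a K\"unneth basis indexed by tuples $(p,e,q,s,r)$ --- the degrees along the $\T^{n-1-n_+-n_-}$-, $S_{\eps}^1$-, $\T^{n_+}$-, $\T^{n_-}$- and $T^{n-1}$-factors --- the $(p,e,q,s,r)$-piece having dimension $\binom{n-1-n_+-n_-}{p}\binom1e\binom{n_+}{q}\binom{n_-}{s}\binom{n-1}{r}$ in homological degree $p+e+q+s+r$. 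Via the section, $i_{\geqslant}$ becomes $\phi\times\id_{T^{n-1}}$, where $\phi\colon\po^{-1}(S_{\eps}^1)=S_{\eps}^1\times\T^{n-1}\to\po^{-1}(\dd\B)$ covers a degree-one map $S_{\eps}^1\to\dd\B$, is the identity on the $\T^{n-1-n_+-n_-}$-factor, and collapses $\T^{n_+}$ over $F_+$ and $\T^{n_-}$ over $F_-$; hence the $r$- and $p$-indices are unobstructed, and a direct analysis of $\phi_*$ on the remaining K\"unneth classes of $S_{\eps}^1\times\T^{n_+}\times\T^{n_-}$ inside $H_*\bigl(\Sigma\T^{n_-}\cup_{S^0}\Sigma\T^{n_+}\bigr)$ --- using that an equatorial fibre torus is nullhomologous in a suspension, that the suspension of a nonzero class is nonzero, and that the base circle generates an infinite cyclic $H_1$ --- shows that $\ker\phi_*$ is spanned exactly by the monomials with \emph{either} $e=0$ and $q+s>0$, \emph{or} $e=1$ and $q>0$ and $s>0$. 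On the other side, through $\Xneps\cong Y$, $\Xn^{=\eps}\cong\dd Y$ (Theorem~\ref{thmTopologyNeighb}), the final assertion of Proposition~\ref{propBettiNeighborhood} identifies $i_{\leqslant*}$ with the map $H_p(\T^{n-1})\otimes H_q(T^n)\to H_{p,q}(Y)$, which is injective for $q\leqslant p$ and zero for $q>p$; since here the $\T^{n-1}$-degree is $p+q+s$ and the $T^n$-degree is $r+e$, the kernel of $i_{\leqslant*}$ consists of the monomials with $r+e>p+q+s$. The intersection of the two kernels is precisely the index set $\Upsilon$ of \eqref{eqConditions}, and summing dimensions gives $H^{\Ker}(t)$ as in \eqref{eqHomolKer}. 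Substituting all four series into the Mayer--Vietoris identity yields \eqref{eqManySummands}.

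The Mayer--Vietoris bookkeeping and the identification of $H^{=\eps}$, $H^{\leqslant\eps}$ from earlier results are routine. I expect the main difficulty to be twofold: first, producing the product splitting $\T^{n-1}=\T^{n_+}\times\T^{n_-}\times\T^{n-1-n_+-n_-}$ of the Liouville--Arnold torus and checking that it is \emph{simultaneously} compatible with the fibre degeneration of $\po$ near $\dd\B$ (needed for $H^{\geqslant\eps}$ and for $i_{\geqslant*}$) and with the internal bigrading of Proposition~\ref{propBettiNeighborhood} (needed for $i_{\leqslant*}$); and second, the explicit determination of $\ker\phi_*$, where the precise pair of alternatives in \eqref{eqConditions} is generated --- this is the genuine heart of the argument.
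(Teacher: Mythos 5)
Your proposal is correct and follows essentially the same route as the paper: the same Mayer--Vietoris decomposition $\Xn=\Xneps\cup\Xngeps$, the same rank identity producing the $(1+t)H^{\Ker}(t)$ term, Proposition~\ref{propBettiNeighborhood} for $H^{\leqslant\eps}$, and the same five-factor splitting of $\Xn^{=\eps}$ for the kernel. Your treatment of $\Xngeps$ via a second Mayer--Vietoris on $\po^{-1}(F_+)\cup\po^{-1}(F_-)$ is equivalent to the paper's homotopy equivalence $\Qngeps\simeq\T^{n-1-n_+-n_-}\times(S^1\vee\Sigma\T^{n_+}\vee\Sigma\T^{n_-})$, and your case analysis of $\ker\phi_*$ correctly fills in what the paper dismisses as ``a straightforward calculation.''
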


\begin{proof}
Althouth the result looks awkward, the idea behind this calculation is straightforward: we analyze the
Mayer--Vietoris sequence for the union $\Xn=\Xneps\cup\Xngeps$. Let
$\Xn^{=\eps}(t)$ denote the intersection $\Xneps\cap\Xngeps$. Then there is
a long exact sequence
\edt{
\begin{equation}\label{eqMVhomology}
\to H_i(\Xn^{=\eps})\stackrel{\iota_i}{\to} H_i(\Xneps)\oplus H_i(\Xngeps)\to
H_i(\Xn)\to H_{i-1}(\Xn^{=\eps})\stackrel{\iota_{i-1}}{\to} H_{i-1}(\Xneps)\oplus H_{i-1}(\Xngeps)\to
\end{equation}
}
\edt{Note} that given an exact sequence
\[
A_i\stackrel{\alpha_i}{\to} B_i\to C_i\to A_{i-1}\stackrel{\alpha_{i-1}}{\to} B_{i-1},
\]
the dimension of the vector space in the middle is given by
\edtt{
\begin{equation}\label{eqExactGen}
\dim C_i=\dim B_i-\dim A_i+\dim\Ker\alpha_i+\dim\Ker\alpha_{i-1}.
\end{equation}
}
After introducing a natural notation
\begin{align*}
H^{=\eps}(t) &= \sum\nolimits_{i}\dim H_i(\Xn^{=\eps})t^i, \\
H^{\geqslant\eps}(t) &= \sum\nolimits_{i}\dim H_i(\Xngeps)t^i, \\
H^{\leqslant\eps}(t) &= \sum\nolimits_{i}\dim H_i(\Xneps)t^i, \\
H^{\Ker}(t) &= \sum\nolimits_{i}\dim(\Ker \iota_i\colon H_i(\Xn^{=\eps})\to H_i(\Xneps)\oplus H_i(\Xngeps))t^i.
\end{align*}
formula \eqref{eqExactGen} implies \eqref{eqManySummands}. We need to check formulas \eqref{eqHomolEps}--\eqref{eqHomolKer}.

(1) According to \eqref{eqEpsTorus}, $\Xn^{=\eps}\cong T^{2n-1}$, which implies \eqref{eqHomolEps}.

(2) The space $\Xngeps$ supports a free action of $T^{n-1}$, which admits a section.
Therefore,
\begin{equation}\label{eqProduct}
\Xngeps=\Qngeps\times T^{n-1},
\end{equation}
where $\Qngeps=\Xngeps/T^{n-1}$. The structure of the orbit space
was described in detail in Section~\ref{secOrbitSpace}: $\Qngeps=\Xngeps/T^{n-1}$ is fibered by
Liouville--Arnold tori over $\B\setminus \{|z|<\varepsilon\}$, --- the biangle with a hole.
The homotopy type of the latter space is
\begin{equation}\label{eqHomotHoledFamily}
\Qngeps\simeq \T^{n-n_+-n_--1}\times (S^1\vee\Sigma\T^{n_+}\vee \Sigma\T^{n_-}).
\end{equation}
\edt{Indeed, the space $\Qngeps$ can be collapsed to the subspace $\po^{-1}(\dd\B)$. The latter space
is foliated over the circle $\dd\B=F_+\cup F_-$ by tori: the preimages of points in the interior of $F_+$ are $\T^{n-n_+-n_--1}\times \T^{n_-}$, the preimages of points in the interior of $F_-$ are $\T^{n-n_+-n_--1}\times \T^{n_+}$, and the preimages of points in $F_+\cap F_-$ are the tori $\T^{n-n_+-n_--1}$. It can be seen that $\T^{n-n_+-n_--1}$ splits as a direct factor. The remaining space is the suspension over the disjoint union $\T^{n_+}\sqcup \T^{n_-}$. By a standard property of the suspension, we have $\Sigma(\T^{n_+}\sqcup \T^{n_-})\simeq (S^1\vee\Sigma\T^{n_+}\vee \Sigma\T^{n_-})$, which proves \eqref{eqHomotHoledFamily}.}

Formula \eqref{eqHomolGEps} follows from \eqref{eqProduct} and \eqref{eqHomotHoledFamily}.

(3) Homology of $\Xneps$ are described by Theorem \ref{propBettiNeighborhood}. Formula \eqref{eqHomolLEps}
is its simple consequence.

(4) To derive \eqref{eqHomolKer}, we need to count the dimension of the
vector space of all homology cycles of $\Xn^{=\eps}$, annihilated by both maps
\begin{align*}
  \iota_*^{\leqslant}\colon& H_*(\Xn^{=\eps})\to H_*(\Xneps), \\
  \iota_*^{\geqslant}\colon& H_*(\Xn^{=\eps})\to H_*(\Xngeps).
\end{align*}
The torus $\Xn^{=\eps}$ decomposes into the product
\[
\Xn^{=\eps}\cong S_\eps^1\times \T^{n-1}\times T^{n-1}\cong \T^{n-1-n_+-n_-}\times S_\eps^1\times \T^{n_+}\times \T^{n_-}\times T^{n-1},
\]
where
\begin{enumerate}
  \item the component $T^{n-1}$ corresponds to the acting torus;
  \item $\T^{n_+}$, $\T^{n_-}$ are the components of Liouville--Arnold tori, collapsing over the sides of the biangle $\B$;
  \item $\T^{n-1-n_+-n_-}$
is the surviving component of the Liouville--Arnold torus.
  \item $S^1$ is the circle $\{|z|=\varepsilon\}$, lifted to the total space.
\end{enumerate}
These five components of the torus explain the appearance \edt{of} the five-element indexing subset in the statement.
Let $\theta=\omega_p\otimes\alpha_e\otimes \omega_q^+\otimes \omega_s^-\otimes \nu_r$ be a homology cycle
from
\[
H_*(\Xn^{=\eps})\cong H_*(\T^{n-1-n_+-n_-})\otimes H_*(S^1)\otimes H_*(\T^{n_+})\otimes H_*(\T^{n_-})\otimes H_*(T^{n-1}),
\]
where the degrees of the factors are indicated in their subscripts.

\edt{Lines 1-2 of condition \eqref{eqConditions}
simply indicate the range of the indices: from zero to the dimension of the corresponding torus. We have $\iota_*^{\leqslant}(\theta)=0$ if and only if $r+e>p+q+s$ according to the last part of Proposition~\ref{propBettiNeighborhood}. This explains line 3 of condition \eqref{eqConditions}. Finally, we investigate the map $\iota_*^{\geqslant}\colon H_*(\Xn^{=\eps})\to H_*(\Xngeps)$ and describe its kernel. Note that the tori $T^{n-1}$ and $\T^{n-1-n_+-n_-}$ split as direct factors in the map $\iota^{\geqslant}\colon \Xn^{=\eps}\to \Xngeps$. Hence we have
\[
\Ker \iota_*^{\geqslant}\cong H_*(T^{n-1}\times\T^{n-1-n_+-n_-})\otimes\Ker \tilde{\iota}_*^{\geqslant},
\]
where $\tilde{\iota}^{\geqslant}$ is the inclusion map from $\Qn^{=\eps}/\T^{n-1-n_+-n_-}\cong S^1\times\T^{n_+}\times\T^{n_-}$ to $\Qngeps/\T^{n-1-n_+-n_-}\simeq \Sigma(\T^{n_+}\sqcup\T^{n_-})\simeq S^1\vee \Sigma\T^{n_+}\vee\Sigma\T^{n_-}$. We have
\begin{equation}\label{eqCollapsingFormulae}
\tilde{\iota}_*^{\geqslant}(\alpha_e\otimes\omega_q^+\otimes\omega_s^-)=
\begin{cases}
  1, & \mbox{if } e=0, q+s=0, \\
  0+0+0, & \mbox{if } e=0, q+s>0, \\
  \alpha_e+0+0, & \mbox{if } e=1, q+s=0,\\
  0+\Sigma\omega_q^++0, & \mbox{if } e=1, q>0, s=0, \\
  0+0+\Sigma_s\omega^-, & \mbox{if } e=1, q=0, s>0,\\
  0+0+0, & \mbox{if } e=1, q>0, s>0.
\end{cases}
\end{equation}
A convenient way to see these formulae is to replace $\tilde{\iota}^{\geqslant}$ by a homotopy equivalent map illustrated on Fig.\ref{figSuspensions}. For example, a homology element $\alpha_e\otimes\omega_q^+\otimes\omega_s^-$ with $e=1$, $q>0$, $s=0$ is represented by a subtorus $S^1\times \T^{q}\times\pt$. The map $\tilde{\iota}_1^{\geqslant}$ sends this cycle to $\Sigma \T^q$ in the right half of $\Sigma(\T^{n_+}\times\T^{n_-}\sqcup \T^{n_+}\times\T^{n_-})$. The left half vanishes since $\T^{n_+}$ is collapsed, together with $\T^q$ over the left part. The same reasoning explains all other formulae~\eqref{eqCollapsingFormulae}}\edtt{.}

\begin{figure}[h]
\begin{center}

\begin{tikzpicture}[scale=0.7]

\filldraw[fill=black!10, draw=black] (0,0) circle (2cm);
\filldraw[fill=white, draw=black] (0,0) circle (1cm);
\filldraw[color=black!25] (1.5,0) ellipse (0.5cm and 0.25cm);
\filldraw[color=black!25] (-1.5,0) ellipse (0.5cm and 0.25cm);
\node at (1.5,-0.7) {$\T^{n_+}\times\T^{n_-}$};
\node at (0,-2.5) {$S^1\times\T^{n_+}\times\T^{n_-}$};

\draw[->] (2.5,0) -- (4.5,0);
\node at (3.5,0.5) {$\tilde{\iota}_1^{\geqslant}$};

\filldraw[fill=black!10, draw=black] (7,0) circle (2cm);
\filldraw[fill=white, draw=black] (7,0) ellipse (1cm and 2cm);
\filldraw[color=black!25] (8.5,0) ellipse (0.5cm and 0.25cm);
\filldraw[color=black!25] (5.5,0) ellipse (0.5cm and 0.25cm);
\node at (8.5,-0.7) {$\T^{n_+}\times\T^{n_-}$};
\node at (5.5,-0.7) {$\T^{n_+}\times\T^{n_-}$};
\node at (7,-2.5) {$\Sigma(\T^{n_+}\times\T^{n_-}\sqcup \T^{n_+}\times\T^{n_-})$};

\draw[->] (9.5,0) -- (11.5,0);
\node at (10.5,0.5) {$\tilde{\iota}_2^{\geqslant}$};

\filldraw[fill=black!10, draw=black] (14,0) circle (2cm);
\filldraw[fill=white, draw=black] (14,0) ellipse (1cm and 2cm);
\filldraw[color=black!25] (15.5,0) ellipse (0.5cm and 0.25cm);
\filldraw[color=black!25] (12.5,0) ellipse (0.5cm and 0.25cm);
\node at (15.5,-0.7) {$\T^{n_+}$};
\node at (12.5,-0.7) {$\T^{n_-}$};
\node at (14,-2.5) {$S^1\vee\Sigma\T^{n_+}\vee\Sigma\T^{n_-}$};

\end{tikzpicture}

\end{center}
\caption{\edt{The map $\tilde{\iota}^{\geqslant}$ pinches the product $S^1\times(\T^{n_+}\times\T^{n_-})$ at two points of $S^1$, then collapses $\T^{n_+}$ on the left side and $\T^{n_-}$ on the right side.}}\label{figSuspensions}
\end{figure}

This calculation shows that $\theta\in\Ker \iota_*^{\geqslant}$ if and only if the 4-th line of condition \eqref{eqConditions} holds. Therefore, both maps $\iota_*^{\leqslant}$ and $\iota_*^{\geqslant}$ annihilate $\theta$ if and only if the 5-tuple
$(p,e,q,s,r)$ satisfies the conditions \eqref{eqConditions}. This proves \eqref{eqHomolKer}.
\end{proof}

\begin{ex}
The Betti numbers computed for small values of $n$ are shown in the Tables
\ref{tableManifoldBetti} and \ref{tableDegenBetti}.
\end{ex}

\begin{table}[h]
  \centering
\begin{tabular}{c||c|c|c|c|c|c|c|c|c|c|c|c|c|}
n&$\beta_0$&$\beta_1$&$\beta_2$&$\beta_3$&$\beta_4$&$\beta_5$&$\beta_6$&$\beta_7$&$\beta_8$&$\beta_9$&$\beta_{10}$&$\beta_{11}$&$\beta_{12}$\\
\hline
\hline
3 & 1 & 0 & 2 & 0 & 2 & 0 & 1 &&&&&&\\
\hline
4 & 1 & 1 & 6 & 2 & 16 & 2 & 6 & 1 & 1 &&&&\\
\hline
5 & 1 & 2 & 13 & 9 & 65 & 16 & 65 & 9 & 13 & 2 & 1 &&\\
\hline
6 & 1 & 3 & 23 & 25 & 203 & 67 & 456 & 67 & 203 & 25 & 23 & 3 & 1\\
\hline
\end{tabular}
  \caption{Manifold case, $n_+=1, n_-=1$.}\label{tableManifoldBetti}
\end{table}

\begin{table}[h]
  \centering
\begin{tabular}{c||c|c|c|c|c|c|c|c|c|c|c|c|c|}
n &$\beta_0$&$\beta_1$&$\beta_2$&$\beta_3$&$\beta_4$&$\beta_5$&$\beta_6$&$\beta_7$&$\beta_8$&$\beta_9$&$\beta_{10}$&$\beta_{11}$&$\beta_{12}$\\
\hline
\hline
4 & 1 & 0 & 3 & 1 & 16 & 3 & 9 & 2 & 1 &&&&\\
\hline
5 & 1 & 0 & 4 & 2 & 57 & 16 & 77 & 22 & 24 & 4 & 1&&\\
\hline
6 & 1 & 0 & 5 & 4 & 167 & 55 & 471 & 115 & 276 & 61 & 39 & 5 & 1\\
\hline
\end{tabular}
  \caption{Most degenerate case, $n_+,n_-$ are the maximal possible for the given $n$, that is $n_++n_-=n-1$.}\label{tableDegenBetti}
\end{table}

\begin{cor}\label{corOddDeg}
The space $\Xn$ has nonzero Betti numbers in odd degrees for $n\geqslant 4$.
\end{cor}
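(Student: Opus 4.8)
The plan is to isolate the coefficient of $t^{2n-1}$ in the Poincar\'e polynomial $P(t)=\sum_i\beta_i(\Xn)t^i$ given by \eqref{eqManySummands} and show it is strictly positive; since $2n-1$ is odd, this proves the corollary. Degree $2n-1=\dim\Xn-1$ is a natural target: when $\Xn$ is a manifold ($n_+=n_-=1$), Poincar\'e duality gives $\beta_{2n-1}=\beta_1=n-3>0$ for $n\geqslant 4$, so one expects the same degree to witness odd homology in general.

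First I would record the trivial degree bounds. From \eqref{eqHomolLEps} one has $\deg H^{\leqslant\eps}\leqslant 2n-2$, and from \eqref{eqHomolGEps} one has $\deg H^{\geqslant\eps}=2n-1-\min(n_+,n_-)\leqslant 2n-2$, whereas $H^{=\eps}(t)=(1+t)^{2n-1}$ has top coefficient $1$. Hence the entire degree-$(2n-1)$ part of \eqref{eqManySummands} comes from its last summand, and
\[
\beta_{2n-1}(\Xn)=[t^{2n-1}]H^{\Ker}(t)+[t^{2n-2}]H^{\Ker}(t)-1 ,
\]
so it only remains to read off two coefficients of the explicit sum \eqref{eqHomolKer}.

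Next I would observe that, among the $5$-tuples lying in the box $0\leqslant p\leqslant n-1-n_+-n_-$, $0\leqslant e\leqslant 1$, $0\leqslant q\leqslant n_+$, $0\leqslant s\leqslant n_-$, $0\leqslant r\leqslant n-1$, the only one with coordinate sum $2n-1$ is $(n-1-n_+-n_-,\,1,\,n_+,\,n_-,\,n-1)$; it satisfies \eqref{eqConditions} (the inequality $r+e>p+q+s$ reads $n>n-1$, while $e=1$, $q=n_+\geqslant 1$, $s=n_-\geqslant 1$), and its binomial product equals $1$. Thus $[t^{2n-1}]H^{\Ker}(t)=1$ and $\beta_{2n-1}(\Xn)=[t^{2n-2}]H^{\Ker}(t)$. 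Since every binomial coefficient occurring in \eqref{eqHomolKer} for a tuple inside the box is positive, it now suffices to exhibit a single tuple in $\Upsilon$ with coordinate sum $2n-2$. Here a case split is forced: if $n_++n_-\leqslant n-2$ I would take $(n-2-n_+-n_-,\,1,\,n_+,\,n_-,\,n-1)$; if $n_++n_-=n-1$ — where the coordinate $p$ is pinned to $0$ — I would use the $(q,n_+)\leftrightarrow(s,n_-)$ symmetry of \eqref{eqConditions}, together with $\max(n_+,n_-)\geqslant 2$ (which holds precisely because $n\geqslant 4$ forces $n_++n_-\geqslant 3$ while $n_+,n_-\geqslant 1$), to assume $n_+\geqslant 2$ and take $(0,\,1,\,n_+-1,\,n_-,\,n-1)$. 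A one-line verification of \eqref{eqConditions} in each case then finishes the argument.

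The only thing that needs attention — rather than any genuine difficulty — is the bookkeeping with the inequalities defining $\Upsilon$, and noticing that the hypothesis $n\geqslant 4$ enters exactly through the second case. If one prefers, the same computation yields the closed form $\beta_{2n-1}(\Xn)=(n-1-n_+-n_-)+[n_+\geqslant 2]\,n_++[n_-\geqslant 2]\,n_-$, which reproduces the last columns of Tables \ref{tableManifoldBetti} and \ref{tableDegenBetti}; but for the corollary only positivity is needed.
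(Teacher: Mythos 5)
Your proof is correct and follows essentially the same route as the paper: both arguments read off odd-degree Betti numbers from Theorem \ref{thmBettiTotal}, and your key step --- showing the coefficient of $t^{2n-1}$ in $(1+t)H^{\Ker}(t)$ is at least $2$ by exhibiting admissible tuples in $\Upsilon$, so that it survives the single unit subtracted by $-H^{=\eps}(t)$ --- is exactly the paper's estimate, made explicit. The only (cosmetic) difference is that the paper handles the non-degenerate case $n_++n_-<n-1$ via $\beta_1(\Xn)=n-1-n_+-n_->0$ and reserves degree $2n-1$ for the most degenerate case, whereas you treat all cases uniformly at degree $2n-1$ and even extract a closed formula consistent with Tables \ref{tableManifoldBetti} and \ref{tableDegenBetti}.
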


\begin{proof}
Theorem \ref{thmBettiTotal} implies that $\beta_1(\Xn)=n-1-n_+-n_-$.
This number is nonzero unless $n_+$, $n_-$ are the maximal possible,
representing the most degenerate case. For the most degenerate case, and $n\geqslant 4$,
$\beta_{2n-1}(\Xn)$ is nonzero. To prove this, it is sufficient to estimate the
coefficient at $t^{2n-1}$ in \eqref{eqHomolKer} by $2$ from below. Indeed, the only
negative term in \eqref{eqManySummands} comes from $H^{\Ker}(t)$, and its coefficient at
$t^{2n-1}$ is one. The estimation is straightforward.
\end{proof}

Recall that with any action of a torus $T$ on a space $X$ one can associate
a fibration $r\colon X\times_TET\stackrel{X}{\to}BT$,
where $ET$ is a contractible space, carrying the free action of $T$ and
$BT$ is the classifying space of a torus. It can be assumed that $ET=(S^\infty)^k$
and $BT=(\CP^\infty)^k$, where $k=\dim T$.

\begin{defin}\label{definEqForm}
The space $X$ is called \emph{equivariantly
formal} in the sense of \edt{Goresky--Kottwitz--Macpherson} \cite{GKM} if the Serre spectral sequence
\begin{equation}\label{eqSerreSpecSeq}
E_2^{*,*}\cong H^*(BT)\otimes H^*(X)\Rightarrow H^*(X\times_TET)=H^*_T(X),
\end{equation}
i.e. the spectral sequence of the fibration $r$, degenerates at its second page.
\end{defin}

\begin{prop}
The space $\Xn$ is not equivariantly formal for $n\geqslant 4$.
\end{prop}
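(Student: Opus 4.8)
The plan is to turn equivariant formality into a numerical identity for Hilbert--Poincar\'e series and contradict it using the odd Betti numbers detected in Corollary \ref{corOddDeg}. Suppose $\Xn$ were equivariantly formal over the field of coefficients fixed in this section. By Definition \ref{definEqForm} the Serre spectral sequence \eqref{eqSerreSpecSeq} collapses at $E_2$, so $H^*_{T^{n-1}}(\Xn)$ has associated graded $H^*(BT^{n-1})\otimes H^*(\Xn)$; over a field this gives multiplicativity of Hilbert--Poincar\'e series,
\[
\Hilb(H^*_{T^{n-1}}(\Xn);t)=\frac{1}{(1-t^2)^{n-1}}\sum_{i=0}^{2n}\beta_i(\Xn)t^i .
\]

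The next step is to compare this with Corollary \ref{corHilbertEqCohom}, which evaluates the same series as $\frac{\sum_{i=0}^{n}h_it^{2i}}{(1-t^2)^{n-1}}+R(t)$ for a genuine polynomial $R(t)$, with $h_i$ the $h$-numbers of $\KPT^{n-1}$ from \eqref{eqHnumbers}. Clearing denominators yields the polynomial identity
\[
\sum_{i=0}^{2n}\beta_i(\Xn)t^i=\sum_{i=0}^{n}h_it^{2i}+(1-t^2)^{n-1}R(t).
\]
Since $n\geqslant 3$, the factor $(1-t^2)^{n-1}$ vanishes at $t=1$ and at $t=-1$, while the even polynomial $\sum_{i}h_it^{2i}$ takes one and the same value at $t=1$ and $t=-1$. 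Evaluating the identity at these two points therefore forces $\sum_{i}\beta_i(\Xn)=\sum_{i}(-1)^i\beta_i(\Xn)$, and subtracting shows that every odd Betti number of $\Xn$ vanishes. (Conceptually this is just the fixed-point phenomenon: $\Xn$ has $n!$ isolated fixed points, so equivariant formality would give $\dim H^*(\Xn)=n!=\chi(\Xn)$, which leaves no room for odd cohomology.)

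Finally this contradicts Corollary \ref{corOddDeg}, which produces a nonzero Betti number of $\Xn$ in an odd degree whenever $n\geqslant 4$ --- namely $\beta_1(\Xn)=n-1-n_+-n_-$ away from the maximally degenerate case, and $\beta_{2n-1}(\Xn)>0$ in the maximally degenerate case. Hence $\Xn$ is not equivariantly formal for $n\geqslant 4$. There is no genuinely hard step: all the substance is already carried by Corollaries \ref{corHilbertEqCohom} and \ref{corOddDeg}. The only points that need care are the standard passage from collapse of the Serre spectral sequence to multiplicativity of Hilbert--Poincar\'e series over a field, and the fact that $R(t)$ is an honest polynomial of bounded degree, which is exactly what makes the substitutions $t=\pm 1$ in the cleared identity legitimate.
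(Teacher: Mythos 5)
Your proposal is correct and uses exactly the same ingredients as the paper's proof: equivariant formality forces $\Hilb(H^*_{T^{n-1}}(\Xn);t)=\Hilb(H^*(\Xn);t)\cdot\Hilb(H^*(BT^{n-1});t)$, which is then played off against Corollary \ref{corHilbertEqCohom} and Corollary \ref{corOddDeg}. The only (harmless) difference is the final bookkeeping: you clear denominators and evaluate at $t=\pm1$ to conclude that all odd Betti numbers would vanish, whereas the paper observes that a nonzero odd Betti number would force nontrivial odd-degree equivariant cohomology in arbitrarily high degrees, contradicting the fact that the odd part in Corollary \ref{corHilbertEqCohom} is confined to the polynomial $R(t)$.
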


\begin{proof}
Assume that $\Xn$ is equivariantly formal. The degeneration of the Serre
spectral sequence \eqref{eqSerreSpecSeq} at a second page then implies
\edt{
\begin{equation*}
\Hilb(H^*_T(\Xn);t)=\Hilb(H^*(\Xn);t)\cdot \Hilb(H^*(BT^{n-1});t)=\Hilb(H^*(\Xn);t)\cdot (1-t^2)^{-(n-1)}.
\end{equation*}
}
This identity and Corollary \ref{corOddDeg} imply that
$H^*_T(\Xn)$ has nontrivial components of arbitrarily high odd degree. This contradicts
to Corollary \ref{corHilbertEqCohom}.
\end{proof}

\begin{rem}
The fundamental group of $\Xn$ can be explicitly described as well.
As in the computations above, we consider the decomposition $\Xn=\Xneps\cup\Xngeps$
and apply van Kampen theorem. For the intersection there holds
\edtt{
\[
\pi_1(\Xn^{=\eps})=\pi_1(T^{n_+}\times T^{n_-}\times T^{n-1-n_+-n_-}\times S_\eps^1\times T^{n-1})=\Zo^{n_+}\oplus \Zo^{n_-}\oplus \Zo^{n-1-n_+-n_-}\oplus \Zo^{n}.
\]
}
The summands $\Zo^{n_+}$ and $\Zo^{n_-}$ vanish in $\pi_1(\Xngeps)$ since
the corresponding components of Lioville--Arnold tori are collapsed over the facets of biangle $\B$.
The summand $\Zo^{n}=\pi_1(S_\eps^1\times T^{n-1})$ vanishes in $\pi_1(\Xneps)$ according
to \cite{Zeng}. The result of this paper asserts that for a locally standard
action of $T$, $\dim T=n$, on $M$, \edt{$\dim M=2n$}, having a fixed point, there holds $\pi_1(M)\cong \pi_1(M/T)$.
In other words, any loop on the acting torus can be contracted via a fixed point.
This result is applied to $\Xneps\cong Y$, carrying the extended action of $T^n$.

No other loops appear in $\Xngeps$ and $\Xneps$, therefore
\[
\pi_1(\Xn)\cong \Zo^{n-1-n_+-n_-}.
\]
Hence $\Xn$ is simply connected if and only if the spectrum $\lambda$ satisfies $n_-+n_+=n-1$.
This corresponds to the most degenerate situation, considered in Remark \ref{remDegenerateCheb}.
\end{rem}

\section*{Acknowledgements}\label{secThanks}
The author would like to thank Victor Buchstaber and Igor Krichever for
their valuable comments on this work. I am also grateful to Dmitry Gugnin and Alexander Gaifullin
who brought the subject of crystallizations and the paper \cite{FGG} to my attention. \edt{I thank the anonymous referee for
numerous valuable comments and suggestions which helped to improve the exposition of the paper.}

\end{document}